\definecolor{lightgray}{gray}{0.9}
\theoremstyle{definition}
\newtheorem{definition}{Definition}[section]
\newtheorem{example}[definition]{Example}
\newtheorem{remark}[definition]{Remark}
\theoremstyle{plain}
\newtheorem{theorem}[definition]{Theorem}
\newtheorem{lemma}[definition]{Lemma}
\newtheorem{proposition}[definition]{Proposition}
\newtheorem{corollary}[definition]{Corollary}
\DeclareMathOperator{\rk}{rk}
\DeclareMathOperator{\id}{id}
\DeclareMathOperator{\Hom}{Hom}
\DeclareMathOperator{\NS}{NS}
\DeclareMathOperator{\Aut}{Aut}
\DeclareMathOperator{\End}{End}
\DeclareMathOperator{\res}{res}
\DeclareMathOperator{\sign}{sign}
\DeclareMathOperator{\Trans}{T}
\DeclareMathOperator{\rad}{rad}
\newcommand{\QQ}{\mathbb{Q}}
\newcommand{\FF}{\mathbb{F}}
\newcommand{\NN}{\mathbb{N}}
\newcommand{\RR}{\mathbb{R}}
\newcommand{\CC}{\mathbb{C}}
\newcommand{\ZZ}{\mathbb{Z}}
\newcommand{\PP}{\mathbb{P}}
\renewcommand{\O}{\mathcal{O}}
\newcommand*{\defeq}{\mathrel{\rlap{%
                     \raisebox{0.3ex}{$\m@th\cdot$}}%
                     \raisebox{-0.3ex}{$\m@th\cdot$}}%
                     =}
\newcounter{counter}       
\newcommand{\upperRomannumeral}[1]{\setcounter{counter}{#1}\mbox{\Roman{counter}}}
\newlength{\myleftlen}
\title[Non-symplectic automorphisms of high order on K3 surfaces]{The classification of purely non-symplectic automorphisms of high order on K3 surfaces}
\author{Simon Brandhorst}
\address{Insitut für Algebraische Geometrie, Leibniz Universität Hannover,
	Welfengarten 1, 30167 Hannover, Germany}
\email{sbrandhorst@web.de}
\date{April, 2015}
\keywords{K3 surface, uniqueness, non-symplectic automorphism, Picard group}
\subjclass[2010]{Primary: 14J28, Secondary: 14J50}
\begin{document}
\begin{abstract}
An automorphism of order $n$ of a K3 surface is called purely non-symplectic if it multiplies the holomorphic symplectic form by a primitive $n$-th root of unity. 
We give the classification of purely non-symplectic automorphisms with $\varphi(n)\geq 12$ where $\varphi$ denotes the Euler totient function.
\end{abstract}
\maketitle
\tableofcontents
\section{Introduction}
A complex K3 surface is a smooth, compact, complex surface with vanishing irregularity and trivial canonical bundle. It is not necessarily algebraic.\\

An automorphism $f$ of a K3 surface $X$ is called symplectic if it acts trivially on the global holomorphic 2-forms, $f^*|H^0(X,\Omega_X^2)=\id$, and non-symplectic otherwise. Furthermore we call $f$ purely non-symplectic if all non-trivial powers are non-symplectic.
Note that K3 surfaces admitting a non-symplectic automorphism of finite order are always algebraic \cite[3.1]{nikulin:auto}.
Finite symplectic group actions on K3 surfaces are fully classified in \cite{hashimoto:symplectic} (which builds on \cite{nikulin:auto,mukai:symplectic_automorphism_groups,kondo:symplectic_automorphism_groups,xiao:symplectic}). However, for non-symplectic automorphisms beyond the case of prime order \cite{artebani_sarti_taki:non-symplectic} most results are partial and there is only a classification of automorphisms of fixed order with extra conditions on their fixed locus or their action on divisors.

Let $X_i$ be K3 surfaces and $G_i\subseteq \Aut(X_i)$ groups of automorphisms for $i \in \{1,2\}$. We say that the pair $(X_1,G_1)$ is equivalent to $(X_2,G_2)$ if there is an isomorphism
$F: X_1 \rightarrow X_2$ such that $F G_1 F^{-1} \defeq \{F \circ g \circ F^{-1} | \} = G_2$.
The main result of this note is the following classification.
\begin{theorem}\label{thm:pair_classification}
	Let $X$ be a K3 surface and $\ZZ/n\ZZ\cong G \subseteq \Aut(X)$ a purely non-symplectic subgroup with $\varphi(n)\geq 12$, where $\varphi$ is the Euler totient function. 
	All pairs $(X,G)$ up to isomorphism are listed in Table \ref{tbl:real_dets2} where $G$ is generated by $f$.
\end{theorem}

Given two explicit K3 surfaces it can be hard to decide whether they are isomorphic or not, as it may be difficult to write down an isomorphism.
A notable exception consists in singular K3 surfaces, i.e. with Picard number $20$, which correspond up isomorphism to their oriented transcendental lattices as can be shown with the strong Torelli theorem for K3 surfaces.

The second main result can be seen as an analogue of this for K3 surfaces with a non-symplectic automorphism. 
It allows to decide effectively if two K3 surfaces (satisfying the hypothesis of the theorem) are abstractly isomorphic without having to give the isomorphism.
To formlate it we need a little more terminology.
Recall that the intersection pairing turns the Neron-Severi group $\NS(X)$ into a lattice. Its dual lattice is denoted by $\NS(X)^\vee$ and the finite quotient group $\NS(X)^\vee/\NS(X)$ is the discrimiant group. 
The transcendental lattice of $X$ is denoted by $\Trans(X)$.
\begin{theorem}\label{thm:main}
 Let $X_i, i=1,2$ be complex K3 surfaces and $f_i \in\Aut(X_i)$ automorphisms with $f_i^*(\omega_i)=\zeta_n\omega_i$ where $\zeta_n=\exp(2\pi/n)$, $n\in \NN$ and $\CC \omega_i = H^0(X_i,\Omega_{X_i}^2)$ such that $\rk \Trans(X_i)=\varphi(n)$.
 Let $I_i$ be the kernel of the natural map given by
 \[\mathbb{Z}[\zeta_n] \rightarrow O(\NS(X_i)^\vee \!/\NS(X_i)),\qquad \zeta_n\mapsto f_i^*.\]
 If $f_1$ is of finite order, then $X_1 \cong X_2$ if and only if $I_1=I_2$.
\end{theorem}

As an application we obtain that the three K3 surfaces with a purely non-symplectic automorphism of order $21$ (see Table \ref{tbl:real_dets2}) are in fact abstractly isomorphic even though the three automorphisms (and the equations describing the K3 surfaces) are quite different.

\section{Complex K3 surfaces}
In this section we recall some basic facts about complex K3 surfaces and non-symplectic automorphisms. The main reference is \cite[VIII §11]{BHPV:compact_complex_surfaces}.\\

Let $X$ be a complex K3 surface. Its second singular cohomology $H^2(X,\mathbb{Z})$ equipped with the cup product is an even unimodular lattice of signature $(3,19)$. Such a lattice is unique up to isometry. We call it the $K3$ lattice and denote it by $L_{K3}$. 
Definitions and properties of lattices are given in the next section.
By the Hodge decomposition
\[H^2(X,\mathbb{Z})\otimes \mathbb{C} \cong H^2(X,\mathbb{C})=H^{2,0}(X) \oplus H^{1,1}(X) \oplus H^{0,2}(X)\]
where $H^{i,j}(X)\cong H^j(X,\Omega_X^i)$, $H^{i,j}(X)=\overline{H^{j,i}(X)}$ and $H^{1,1}(X)=(H^{2,0}(X) \oplus H^{0,2}(X))^\perp$ has signature $(1,19)$.\\

Conversely, the Hodge structure of a K3 surface determines it up to isomorphism as is reflected by the Torelli theorems. 
\begin{theorem}\cite[VIII 11.1]{BHPV:compact_complex_surfaces}
 Let $X,Y$ be complex K3 surfaces and 
 \[f: H^2(X,\mathbb{Z}) \rightarrow H^2(Y,\mathbb{Z})\]
 an isometry of lattices whose $\mathbb{C}$-linear extension maps $H^{2,0}(X)$ to $H^{2,0}(Y)$. Then $X\cong Y$.
 If moreover $f$ maps ample classes on $X$ to ample classes on $Y$, then $f=F^*$ for a unique isomorphism
 $F: Y \rightarrow X$. 
\end{theorem}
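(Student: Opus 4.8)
The plan is to prove the two assertions separately: first the \emph{weak} statement (that some isomorphism exists), then the \emph{strong} refinement (that the given $f$ is induced by a unique $F$). I would organize everything around the period map on the moduli space of marked K3 surfaces, landing in the period domain
\[
\Omega = \{\, [\omega] \in \mathbb{P}(L_{K3}\otimes\mathbb{C}) : \omega\cdot\omega = 0,\ \omega\cdot\bar\omega > 0 \,\}.
\]
The first ingredient is the \emph{local} Torelli theorem: the period map is a local isomorphism. I would establish this by computing its differential through the Kodaira--Spencer map $H^1(X,\T_X)\to \Hom(H^{2,0}(X),H^{1,1}(X))$ and using the canonical isomorphism $\T_X\cong \Omega_X^1$ supplied by the nowhere-vanishing holomorphic $2$-form. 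This identifies the differential with a cup-product pairing, which one checks is injective of the correct rank, so that periods give local coordinates on the $20$-dimensional moduli.

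Next I would reduce the global statement to the case of Kummer surfaces. For $X=\mathrm{Km}(A)$ the transcendental lattice $T(X)$ embeds (up to rescaling by $2$) into $H^2(A,\mathbb{Z})$, so a Hodge isometry $f$ between two Kummer surfaces restricts to a Hodge isometry of the transcendental lattices of the underlying complex tori. I would then invoke the classical Torelli theorem for tori to produce an isomorphism $A'\to A$, and verify that, after correcting by automorphisms and by reflections in the $(-2)$-classes coming from the sixteen exceptional curves of the resolution, this descends to an isomorphism of the Kummer surfaces inducing $f$.

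The passage from Kummer surfaces to arbitrary K3 surfaces is carried out by a density-and-continuity argument. Kummer surfaces are dense in $\Omega$, so one can deform a given pair $(X,Y)$ together with the Hodge isometry $f$ in a family degenerating to Kummer surfaces, where the result is already known, and then transport the isomorphism back to the central fibre. This transport relies on the surjectivity of the period map and on faithfulness of the cohomological action (a nontrivial automorphism of a K3 surface acts nontrivially on $H^2$, via Lefschetz fixed-point considerations), which guarantees that the limit of isomorphisms is again an honest isomorphism rather than a mere correspondence. For the strong statement I would add the bookkeeping of the positive cone: the effective classes and the $(-2)$-curves cut the positive cone into chambers, one of which is the ample (Kähler) cone, and a general Hodge isometry differs from an effective-cone-preserving one by a product of reflections in $(-2)$-classes; the hypothesis that $f$ sends effective classes to effective classes then singles out the correct chamber and hence the unique $F$ with $f=F^*$.

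The main obstacle is precisely this global extension step. Establishing the density of Kummer periods and propagating the isomorphism along a degenerating family is delicate: it presupposes the surjectivity of the period map (itself a substantial theorem, again proved through Kummer surfaces and deformation theory) and a careful continuity argument to ensure the limiting morphism remains biregular and compatible with the effective cones. By contrast, the local Torelli computation and the torus-Torelli input for the Kummer case are comparatively routine.
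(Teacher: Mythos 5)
This theorem is not proved in the paper at all: it is quoted verbatim from \cite{BHPV:compact_complex_surfaces} (VIII 11.1) as an external input, so there is no internal proof to compare against. Your sketch reproduces the classical Burns--Rapoport / Looijenga--Peters argument that is in fact the proof given in the cited reference (local Torelli via Kodaira--Spencer and the identification of the tangent sheaf with $\Omega_X^1$, Torelli for Kummer surfaces via Torelli for complex tori, propagation to all K3 surfaces by density of Kummer periods, and the chamber/$(-2)$-reflection bookkeeping for the strong form), so in substance you are following the same route as the source.

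Two points of precision are worth flagging. First, the global step does not proceed by ``deforming $(X,Y)$ in a family degenerating to Kummer surfaces'': Kummer surfaces are not degenerations of general K3 surfaces. The actual argument approximates the period point of $X$ by a sequence of Kummer periods inside the period domain and uses a closedness property of the locus where the Torelli statement holds (the graphs of the isomorphisms converge because the cohomological actions are bounded in a fixed lattice and the automorphism representation on $H^2$ is faithful); phrased as a degeneration the argument does not quite parse. Second, in the Kummer step the reduction to Torelli for tori is not immediate: a Hodge isometry of $H^2(A,\mathbb{Z})\cong\bigwedge^2 H^1(A,\mathbb{Z})$ must be lifted to a Hodge isometry of $H^1$, which requires a separate argument (Shioda's treatment of the period map for abelian surfaces). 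These are exactly the places where the cited reference spends most of its effort, and your own closing paragraph correctly identifies them as the substantive content; as a blind reconstruction of a theorem the paper only cites, the sketch is a faithful outline rather than a complete proof.
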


By Lefschetz' Theorem on $(1,1)$-classes we can recover the Néron-Severi group from the Hodge structure as 
\[\NS(X)= H^{2,0}(X)^\perp\cap H^2(X,\mathbb{Z}).\]
Its rank $\rho$ is called the Picard number of $X$. 
The \emph{transcendental lattice} is defined as the smallest primitive sublattice $\Trans(X)\subseteq H^2(X,\mathbb{Z})$ whose complexification contains $H^{2,0}(X) \subseteq \Trans(X)\otimes \mathbb{C}$. 
The surface $X$ is projective if and only if $\NS(X)$ has signature $(1,\rho-1)$.
In this case $\Trans(X)=\NS(X)^\perp$.
If we consider just a single K3 surface $X$, we will usually omit the $(X)$ from notation and just write $H^{i,j}$, $\NS$, $T$. From now on all K3 surfaces are assumed to be projective. 

If $\delta \in \NS$ has self intersection $-2$, then $\delta$ or $-\delta$ is effective. 
The set of \emph{roots} is
\[\Delta_{\NS} = \{\delta \in \NS | \delta^2 = -2\}.\]
The positive cone $P$ is defined as connected component of the light cone $\{x \in L \otimes \RR | x^2 > 0\}$ that contains the ample cone. The connected components of the set \[P \setminus \bigcup_{\delta \in \Delta}\delta^\perp\]
are called the \emph{chambers} of $L$. One of the chambers is the ample cone.
Each root $\delta$ defines a reflection $r_\delta \in O(L)$ by 
$r_\delta(x)=x + (x.\delta) \delta$ along the hyperplane $\delta^\perp$.
The \emph{Weyl Group} $W(\NS)$ is the subgroup of $O(\NS)$ generated by reflections. It acts transitively on the set of chambers. Denote by $O^+(\NS)$ the subgroup of $O(NS)$ preserving the positive cone and by $\Gamma(\NS)$ the subgroup preserving the ample cone. Then
$\Gamma(\NS)$ is isomorphic to $O^+(L)/W(\NS)$ and the strong Torelli theorem provides that this group is up to finite index the automorphism group of $X$. We define the same notions in an analogous way for any
hyperbolic lattice $L$ in place of $\NS$ - after the choice of a chamber in place of the ample cone.

\section{Lattices}
 In this section we recall basic facts about lattices and introduce the necessary notation.
 Our main references are \cite{nikulin:quadratic_forms, conway_sloane:sphere_packings}.\\

 A \emph{lattice} is a finitely generated free abelian group $L\cong \mathbb{Z}^n$ together with a non-degenerate bilinear pairing
 \[L \times L \mapsto \mathbb{Z}, \qquad (x,y)\mapsto x.y\]
 It is called \emph{even} if $x^2 \defeq x.x \in 2\mathbb{Z}$ for all $x\in L$. 
 The pairing induces an isomorphism 
 \[\Hom_\mathbb{Z}(L,\mathbb{Z})\cong L^\vee\defeq\{x \in L \otimes \mathbb{Q} \mid x.L \subseteq \mathbb{Z}\}\]
 with the \emph{dual lattice} $L^\vee$. The quotient $L^\vee \!/L=:D_L$ is finite, abelian and called \emph{discriminant group}. Its order is equal to $|\det L|$.
 If $D$ is a finite abelian group, the minimum number of generators of $D$ is called the \emph{length} $l(D)$ of $D$. Note that
 $l(D_L)\leq \rk L$. 
 If $D_L=0$, or equivalently $|\det L|=1$, we call the lattice $L$ \emph{unimodular}, and if $nD_L=0$ for $n\in \NN$, we call $L$ \emph{$n$-elementary}.
 The discriminant group is equipped with a fractional form 
 \[b: D_L \times D_L \rightarrow \mathbb{Q}/\mathbb{Z}, \quad (\overline{x},\overline{y}) \mapsto x.y + \mathbb{Z}. \]
 On an even lattice there is the \emph{discriminant form} $q$ given by
 \[q: D_L \rightarrow \mathbb{Q}/2\mathbb{Z}, \qquad \overline{x}\mapsto x^2 + 2\mathbb{Z}.\]
 The discriminant group decomposes as an orthogonal direct sum of $p$-groups
 \[D_L\cong \bigoplus_p (D_L)_p.\]
 Note that, by polarization, $b|(D_L)_p^2$ and $q|(D_L)_p$ carry the same information for odd primes $p\neq2$.
 If $(D_L)_p$ is an $\mathbb{F}_p$-vector space, then $q|(D_L)_p$ takes values in $\tfrac{2}{p}\mathbb{Z}/2\mathbb{Z}$.
 A subgroup $S\subseteq D_L$ is called \emph{totally isotropic} if $q|_{D_L}=0$. Totally isotropic subspaces correspond bijectively to even overlattices.\\
 
 We say two lattices $M,N$ are in the same \emph{genus} if $N\otimes_\mathbb{Z} \mathbb{Z}_p\cong M\otimes_\mathbb{Z} \mathbb{Z}_p$ are isometric over the $p$-adic integers for all primes $p$ and $N\otimes_\mathbb{Z} \mathbb{R} \cong M\otimes_\mathbb{Z} \mathbb{R}$ over the real numbers. We use the Conway Sloane \cite[Chapter 15]{conway_sloane:sphere_packings} genus symbols to describe a genus.
 
 \begin{theorem}\cite[1.9.4]{nikulin:quadratic_forms}
  The signature $(n_+,n_-)$ and discriminant form $q$ determine the genus of an even lattice and vice versa.  
 \end{theorem}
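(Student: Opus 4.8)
The plan is to prove both implications separately. The \emph{vice versa} direction is immediate: if $M$ and $N$ lie in the same genus, then $M\otimes\RR\cong N\otimes\RR$ forces equal signature, since real quadratic forms are classified by signature; and $M\otimes\ZZ_p\cong N\otimes\ZZ_p$ for every prime $p$ induces an isometry of $p$-adic duals modulo the lattices, hence an isometry $\discgroup(M)_p\cong\discgroup(N)_p$ respecting the quadratic forms. Summing over all $p$ gives $q_M\cong q_N$. So the real content is the forward direction: given two even lattices with the same signature $(n_+,n_-)$ and isometric discriminant forms $q$, I must produce isometries over $\RR$ and over each $\ZZ_p$.

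Over $\RR$ the equal signature settles matters, so the work is purely local at each prime $p$. Here I would invoke the Jordan decomposition of a $\ZZ_p$-lattice as an orthogonal sum of blocks $p^k L_k$ with $L_k$ unimodular over $\ZZ_p$. The key observation is that only the blocks with $k\geq 1$ contribute to $\discgroup(L)_p$, and the $p$-part of the discriminant form records exactly the ranks $\rk L_k$ together with the scaled forms of these blocks. Thus $q_p$ already determines the entire non-unimodular part of $L\otimes\ZZ_p$ up to isometry, and the only remaining freedom is in the unimodular block $L_0$.

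For odd $p$ the block $L_0$ is pinned down by two data: its rank, which equals $n_+ + n_- - \sum_{k\geq 1}\rk L_k$ and is therefore determined since the total rank and the $\rk L_k$ are; and its determinant class in $\ZZ_p^\times/(\ZZ_p^\times)^2$. For the latter I would use that the global determinant $\det L = (-1)^{n_-}\lvert\discgroup(L)\rvert$ is fixed by the signature and the order of the discriminant group, hence so is its image in $\ZZ_p^\times/(\ZZ_p^\times)^2$; dividing out the determinants of the scaled blocks (which $q_p$ controls) leaves $\det L_0$ determined. Since a unimodular $\ZZ_p$-lattice for odd $p$ is classified by rank and determinant, $M\otimes\ZZ_p\cong N\otimes\ZZ_p$ follows, disposing of all odd primes.

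The genuine obstacle is $p=2$. Here the Jordan decomposition is coarser: blocks may be of even ($U$, $V$) or odd type, the decomposition is far from unique, and its invariants obey sign- and oddity-walking relations, so rank and determinant no longer suffice. This is precisely why one must keep the discriminant \emph{quadratic} form rather than only the bilinear form $b$. I would match the $2$-adic Jordan invariants of $M$ and $N$ using $q_2$ to fix the even/odd type and the oddity of each scaled block, and then invoke Milgram's formula, which expresses $\sign(L)\bmod 8$ as the Gauss-sum signature of $q$, to reconcile the global sign and determinant data with the local $2$-adic oddity. The crux is to show that these invariants, all readable from the common $(n_+,n_-)$ and $q$, single out exactly one $\ZZ_2$-isometry class, giving $M\otimes\ZZ_2\cong N\otimes\ZZ_2$. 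Once this holds, the genera of $M$ and $N$ agree at every place and the theorem follows.
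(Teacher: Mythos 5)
The paper offers no proof of this statement: it is quoted directly from Nikulin \cite[1.9.4]{nikulin:quadratic_forms}, so there is nothing internal to compare your argument against. Your outline follows the standard route (Nikulin's, also Conway--Sloane Chapter 15): split into the places of $\QQ$, handle $\RR$ by signature, and handle each finite prime via the Jordan decomposition. The ``vice versa'' direction and the odd primes are treated adequately; for odd $p$ the scaled Jordan blocks are indeed recovered from $q_p$, and the unimodular block is pinned down by its rank together with the determinant class, which you correctly extract from $\det L=(-1)^{n_-}\lvert \discgroup(L)\rvert$.

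The gap is at $p=2$, and it is not peripheral: the $2$-adic case is essentially the entire content of the theorem. Your sentence that one must ``show that these invariants \dots single out exactly one $\ZZ_2$-isometry class'' \emph{is} the theorem at $2$, not a proof of it. Concretely, the $2$-adic Jordan decomposition is not unique, and inequivalent-looking Jordan symbols can define isometric $\ZZ_2$-lattices (sign walking, oddity fusion), while distinct $\ZZ_2$-lattices can share a discriminant bilinear form; what must actually be established is that any two even $\ZZ_2$-lattices with isomorphic discriminant \emph{quadratic} forms, equal rank, and determinants agreeing in $\ZZ_2^\times/(\ZZ_2^\times)^2$ (the latter two supplied by $(n_+,n_-)$ and $\lvert\discgroup(L)\rvert$) are isometric, together with the fact (Milgram's formula) that $\sign(L)\bmod 8$ is already encoded in $q$, so no further $2$-adic invariant survives. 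That requires the explicit classification of finite quadratic forms on $2$-groups and of even $\ZZ_2$-lattices, and a case analysis matching Jordan symbols to forms; none of this appears in your sketch. As written, the proposal is a correct plan whose decisive step is deferred rather than carried out.
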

 
 \textbf{Notation.} 
 We denote by $A_n, D_n, E_i$, $i=6,7,8$ the negative definite root lattices with the respective Dynkin diagram.
 The unique unimodular lattice of signature $(1,1)$ is called a hyperbolic plane and denoted by $U$.
 The orthogonal direct sum of two lattices $L_1,L_2$ is denoted by $L_1 \oplus L_2$ and the lattice $L$ with bilinear form rescaled by $a \in \ZZ$ by $L(a)$. The group of isometries of a lattice $L$ is denoted by $O(L)$ and the group of isometries of the discriminant form $q_L$ of $L$ by $O(q_L)$. If $N \subseteq L$ is a subset,
 then $N^\perp$ is its orthogonal complement in $L$.
 If $f$ is an endomorphism of a free abelian group or vector space $L$ we denote by $\chi_f$ its characteristic polyomial, by $\mu_f$ its minimal polynomial and for a subset $N \subseteq L$ by $fN \defeq f(N)$ its image under $f$.

\subsection{Primitive embeddings}
In this section we review the theory of primitive embeddings of lattices in the language of discrimiant forms
as developed in \cite{nikulin:quadratic_forms}.

\begin{definition}
An embedding of lattices $i:M\hookrightarrow L$ is called \emph{primitive} if the cokernel is free. 
\end{definition}

\begin{example}
 By Lefschetz theorem on $(1,1)$-classes $\NS(X) \hookrightarrow H^2(X,\ZZ)$ is a primitive embedding.
\end{example}

We call two primitive embeddings $i,j: M\hookrightarrow L$ isomorphic if there is a commutative diagram with
$f \in O(L)$.
\[
\begin{tikzcd}
M \arrow[hook]{r}{i}\arrow[hook]{rd}{j} &L \arrow{d}{f}\\
&L
\end{tikzcd}
\]

We shall say that $S$ \emph{embeds uniquely} into $L$ if all primitive embeddings are isomorphic.
A (weakened) criterion for this to happen is given in the next theorem.  
\begin{theorem}\label{thm:unique embedding}\cite[Theorem 1.14.4]{nikulin:quadratic_forms}\cite[2.8]{morrison:large_picard_number}
 Let $M$ be an even lattice of signature $(m_+,m_-)$ and $L$ an even unimodular lattice of signature $(l_+,l_-)$.
 If $l(D_M)+2\leq \rk L - \rk M$ and $l_+>m_+,l_->m_-$, then
 there is a unique primitive embedding of $M$ into $L$.
\end{theorem}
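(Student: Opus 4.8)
The plan is to reduce the uniqueness of the embedding to two facts about its orthogonal complement $N$, both of which will follow from the indefiniteness and rank hypotheses via Nikulin's theory of gluing isometries. First I would set up the standard correspondence between primitive embeddings and gluing data. Given a primitive embedding $i\colon M \hookrightarrow L$ into the unimodular lattice $L$, set $N \defeq i(M)^\perp$. Then $M \oplus N \subseteq L$ is a sublattice of finite index, and because $L$ is unimodular it is recovered as the overlattice corresponding to the graph of an anti-isometry (gluing map) $\gamma\colon \discgroup_M \xrightarrow{\sim} \discgroup_N$ with $q_N \circ \gamma = -q_M$. In particular $(\discgroup_N, q_N) \cong (\discgroup_M, -q_M)$, and $N$ has signature $(l_+ - m_+,\, l_- - m_-)$. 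By Nikulin's classification of primitive embeddings into a unimodular lattice \cite{nikulin:quadratic_forms}, two such embeddings are isomorphic in the sense of the diagram above exactly when their complements are isometric and their gluing maps agree up to the induced actions of $O(M)$ and $O(N)$ on the discriminant forms. Hence it suffices to show (a) that $N$ is the unique lattice, up to isometry, in its genus, and (b) that all admissible gluing maps lie in a single orbit.

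For (a): the signature hypotheses $l_+ > m_+$ and $l_- > m_-$ make $N$ indefinite with $n_+ = l_+ - m_+ \geq 1$ and $n_- = l_- - m_- \geq 1$. Since $\discgroup_N \cong \discgroup_M$ we have $l(\discgroup_N) = l(\discgroup_M)$, so the hypothesis $l(\discgroup_M) + 2 \leq \rk L - \rk M$ translates precisely into $\rk N \geq l(\discgroup_N) + 2$. Nikulin's uniqueness-in-genus theorem \cite{nikulin:quadratic_forms} (which rests ultimately on Eichler's theorem for indefinite forms of rank $\geq 3$ and the triviality of the spinor genus in this range) then guarantees that an indefinite even lattice in this stable range is determined up to isometry by its genus, i.e. by its signature and $-q_M$. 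This pins down the isometry class of $N$, and the corresponding existence statement simultaneously produces such an $N$, hence at least one primitive embedding.

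For (b), and the conclusion: the same bound $\rk N \geq l(\discgroup_N) + 2$ yields, again by \cite{nikulin:quadratic_forms}, the surjectivity of the natural map $O(N) \twoheadrightarrow O(\discgroup_N)$. Given two admissible gluings $\gamma, \gamma'\colon \discgroup_M \to \discgroup_N$, the composite $\gamma' \circ \gamma^{-1} \in O(\discgroup_N)$ lifts to an isometry $\tilde\sigma \in O(N)$; then $(\id_M, \tilde\sigma)$ carries the graph of $\gamma$ onto the graph of $\gamma'$ and therefore extends to an isometry of the overlattices realizing an isomorphism of the two embeddings. Thus all gluings form a single double coset, and combining this with the uniqueness of $N$ gives a unique primitive embedding. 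The conceptual core, and the only genuinely hard input, is this pair of statements for $N$ in the stable range — uniqueness in the genus and surjectivity onto $O(\discgroup_N)$ — both consequences of strong approximation and Eichler's theorem for indefinite lattices, which I would invoke rather than reprove. Everything else is the bookkeeping that converts the hypotheses on $M$ and $L$ into the rank and signature conditions on the complement $N$.
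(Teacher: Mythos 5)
The paper gives no proof of this statement: it is quoted directly from Nikulin (Thm.\ 1.14.4) and Morrison (Cor.\ 2.8), and your argument is precisely the standard proof from those sources — reduce to the orthogonal complement $N$, note the hypotheses force $N$ to be indefinite with $\rk N \geq l(\discgroup_N)+2$, and invoke uniqueness of $N$ in its genus together with surjectivity of $O(N)\to O(q_N)$ (the paper's Theorem \ref{thm:orthogonal_group_surjective}) to collapse all gluing data into one orbit. Your reconstruction is correct; the only cosmetic slip is describing the equivalence of embeddings as modding out by $O(M)\times O(N)$ when the paper's definition fixes $M$ pointwise, but your actual argument only uses the $O(N)$ action, so nothing is affected.
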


We mention the related
\begin{theorem}\label{thm:orthogonal_group_surjective}\cite[Theorem 1.14.2]{nikulin:quadratic_forms}
	Let $M$ be an even, indefinite lattice such that $\rk M\geq 2+ l(D_M)$, then the genus of $M$ contains only one class, and the homomorphism $O(M)\rightarrow O(q_M)$ is surjective.
\end{theorem}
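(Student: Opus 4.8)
The statement has two parts: that the genus of $M$ consists of a single isometry class, and that the reduction map $O(M)\to O(q_M)$ is onto. Both are ``one object in the genus'' phenomena, and my plan is to derive them from the local structure forced by the rank inequality together with strong approximation for indefinite forms. Recall that the genus of an even lattice is determined by its signature and its discriminant form; so the first assertion is exactly the statement that these two invariants determine $M$ up to isometry.

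The first step is purely local. At each prime $p$ the $p$-adic Jordan splitting writes $M\otimes\ZZ_p$ as an orthogonal sum of rescaled unimodular blocks, and the blocks with nontrivial scaling account precisely for the $p$-part $D_{M,p}$ of the discriminant, whose length is at most $l(D_M)$. Hence $M\otimes\ZZ_p$ contains a unimodular orthogonal summand $U_p$ of rank $\ge \rk M - l(D_M)\ge 2$. This rank-$\ge 2$ unimodular summand is the engine of the whole argument: it is large enough that (i) the spinor norm of $O(M\otimes\ZZ_p)$ fills out all of $\ZZ_p^\times/(\ZZ_p^\times)^2$, and (ii) every isometry of the local discriminant form $q_{M,p}$ lifts to $O(M\otimes\ZZ_p)$, since one can correct the determinant and spinor defects by reflections and rotations supported on $U_p$. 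The prime $p=2$ requires the standard, slightly more delicate local computation, but the conclusion is the same.

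For the class number I would invoke Eichler's theorem: for an indefinite lattice of rank $\ge 3$ the isometry class coincides with the spinor genus, and the number of proper spinor genera in a genus is measured by the cokernel of the product of the local spinor-norm maps. Since step one makes every local spinor norm surjective, there is a single spinor genus, hence a single class. The rank-$2$ boundary is not covered by strong approximation, but there $l(D_M)=0$ forces $M$ to be even, indefinite and unimodular, so $q_M=0$ and $M$ is the hyperbolic plane $U$, for which both assertions are trivial.

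For the surjectivity of $O(M)\to O(q_M)$ I would again combine the local lifts of step one with strong approximation. Given $\psi\in O(q_M)$, choose for each prime a local lift $\psi_p\in O(M\otimes\ZZ_p)$ inducing $\psi$ on $q_{M,p}$ (possible by (ii), and equal to the identity outside the finite set of primes dividing $\det M$); together with the identity at the real place these assemble into an isometry of the adelic lattice. Because $M$ is indefinite of rank $\ge 3$ and the local spinor norms are surjective, strong approximation for the spin group lets me approximate this adelic isometry by a genuine global $\phi\in O(M)$ whose action on the finite group $D_M$ agrees with that of the adelic one, that is $\bar\phi=\psi$. The main obstacle, and the one genuinely deep input, is exactly this strong-approximation step (equivalently Eichler's theory of spinor genera): all the lattice-theoretic work merely arranges the local hypotheses -- surjectivity of the local spinor norms and liftability of the local discriminant isometries -- that the inequality $\rk M \ge 2 + l(D_M)$ is precisely designed to guarantee.
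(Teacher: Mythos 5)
The paper does not prove this statement at all: it is quoted verbatim as Nikulin's Theorem~1.14.2 and used as a black box, so there is no internal argument to compare yours against. What you have written is a sketch of the standard proof of Nikulin's theorem itself, and its architecture is sound: the inequality $\rk M\ge 2+l(D_M)$ gives, at every prime $p$, a unimodular Jordan component of rank $\ge 2$ in $M\otimes\ZZ_p$ (since $l(D_{M,p})\le l(D_M)$ equals the total rank of the non-unimodular Jordan blocks); this forces the local spinor norms to contain all of $\ZZ_p^\times(\QQ_p^\times)^2$; Eichler--Kneser then collapses the genus to a single spinor genus and a single class for indefinite rank $\ge 3$; and the same strong-approximation machinery glues local lifts of a given $\psi\in O(q_M)$ into a global isometry. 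Your treatment of the rank-$2$ boundary ($M\cong U$, everything trivial) is a correct and necessary observation, since Eichler's theorem needs rank $\ge 3$.

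Two caveats on where your sketch leans on unproved input. First, your step (ii) — that every isometry of $q_{M,p}$ lifts to $O(M\otimes\ZZ_p)$ — is itself a nontrivial local theorem (Nikulin's \S 1.9), valid for arbitrary $p$-adic lattices and genuinely delicate at $p=2$; the unimodular rank-$2$ summand is not really what makes the lift \emph{exist}, its role is rather to let you adjust the determinant and spinor norm of a lift that already exists, which is what the global gluing needs. Second, the phrase ``the standard, slightly more delicate local computation'' at $p=2$ is doing real work: one must check that an \emph{even} unimodular rank-$2$ $\ZZ_2$-lattice ($U$ or the form $\bigl(\begin{smallmatrix}2&1\\1&2\end{smallmatrix}\bigr)$) still yields full spinor norm, which it does, but this is exactly the kind of case analysis that Nikulin's conditions at $p=2$ are designed to package. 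Neither point is a gap in the logic so much as an acknowledgment that you are reassembling Nikulin's and Eichler's theorems rather than replacing them; as a justification for the statement as used in this paper, citing Nikulin (as the author does) is the honest course.
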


A different perspective on primitive embeddings is given by the concept of primitive extensions.
Let $i:M\hookrightarrow L$ be a primitive embedding. Then $N\defeq M^\perp$ is also a primitive sublattice and we call
$M\oplus N \hookrightarrow L$
a \emph{primitive extension}. A \emph{glue map} is a map $\phi$ defined on certain subgroups 
\[D_{M} \supseteq G_M \xrightarrow[\phi]{\sim} G_N \subseteq D_{N},\]
with the extra condition that $q_M=-q_N \circ \phi$. 
\begin{theorem}\cite[Prop 1.15.1]{nikulin:quadratic_forms}
	There is a one to one correspondence 
	\[
	\left\{ 
	\begin{matrix}
	\mbox{Primitive extensions } \\
	M \oplus N \hookrightarrow L
	\end{matrix}
	\right\}
	\stackrel{1 : 1}{\longleftrightarrow}
	\left\{ 
	\begin{matrix}
	\mbox{Glue maps} \\
	D_{M} \supseteq G_M \xrightarrow[\phi]{\sim} G_N \subseteq D_{N}
	\end{matrix}
	\right\}.
	\]
\end{theorem}
This correspondence arises as follows.
Given a glue map $\phi$, we define \emph{the glue}
\[G_\phi:=\{x+\phi(x) |x\in G_M\} \subseteq D_{M} \oplus D_{N}\] 
as the graph of $\phi$.
By construction, $G_\phi$ is a totally isotropic subspace of $D_M \oplus D_N$. Hence, we can define a lattice $L = M \oplus_{\phi} N$ via 
\[L/(M\oplus N) = G_\phi.\]
The reader may check that $M \oplus N \hookrightarrow L$ is indeed a primitive extension.\\

Conversely, given a primitive extension as above, we interpret the totally isotropic subspace $L/(M\oplus N)$ as the defining graph $G_\phi$ of a glue map $\phi$. Explicitly this is given by the isomorphisms 
 \begin{equation}\label{eq:glue H isos}
 G_M \cong L/(M\oplus N) = G_\phi \cong G_N.
 \end{equation}
The glue map is defined on the spaces $G_M:= p_M(L)/M$ and $G_N:= p_N(L)/N$ where 
\[p_M: M^\vee \oplus N^\vee \rightarrow M^\vee \mbox{ and } p_N: M^\vee \oplus N^\vee \rightarrow N^\vee\] 
are the orthogonal projections.

With the glue map $\phi$ understood, we will often drop it from notation and simply denote by $G=L/(M \oplus N)$ the glue of a primitive extension. In this situation we say that $M$ and $N$ are glued along $|G|$.\\

There is the following constraint on the size of the glue:

\begin{lemma}\label{lem:glue estimate}
 \[|D_{N}/G_N| \cdot |D_{M}/G_M| = \det L \]
\end{lemma}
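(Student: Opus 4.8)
The plan is to relate $\det L$ to $\det M$ and $\det N$ through the index formula for finite-index sublattices, and then to translate the indices that appear into orders of discriminant groups. Recall that for any non-degenerate lattice one has $\det L = |D_L|$ (interpreting $\det L$ as the absolute value of the determinant of a Gram matrix), and that whenever $L' \subseteq L$ is a sublattice of finite index, $\det L' = [L:L']^2 \cdot \det L$. The latter is immediate from the transformation law $G' = A^{\top} G A$ of Gram matrices, where $A$ is the integral change-of-basis matrix with $|\det A| = [L:L']$.

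First I would apply this to the inclusion $M \oplus N \hookrightarrow L$. Since $\det(M \oplus N) = \det M \cdot \det N$ and the index $[L : M \oplus N]$ equals $|\glue|$, where $\glue = L/(M\oplus N)$ is the glue, this yields
\[ |D_M| \cdot |D_N| \;=\; \det M \cdot \det N \;=\; |\glue|^2 \cdot \det L. \]

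Next I would exploit that $\glue$ is the graph of the glue isomorphism $\phi\colon \glue_M \xrightarrow{\cong} \glue_N$. The two projections $p_M, p_N$ restrict on $\glue$ to isomorphisms $\glue \cong \glue_M$ and $\glue \cong \glue_N$, so $|\glue| = |\glue_M| = |\glue_N|$, and hence $|\glue|^2 = |\glue_M|\cdot|\glue_N|$. Substituting into the displayed identity and using that $\glue_M \subseteq D_M$ and $\glue_N \subseteq D_N$ are subgroups gives
\[ \det L \;=\; \frac{|D_M|\cdot|D_N|}{|\glue_M|\cdot|\glue_N|} \;=\; \frac{|D_M|}{|\glue_M|} \cdot \frac{|D_N|}{|\glue_N|} \;=\; |D_M/\glue_M| \cdot |D_N/\glue_N|, \]
which is the assertion.

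There is no real obstacle in this argument; it is a bookkeeping computation with determinants and indices. The only point that must be handled with care is the identity $[L:M\oplus N] = |\glue_M| = |\glue_N|$ — one must confirm that the glue has order $|\glue_M|$ rather than $|\glue_M|\cdot|\glue_N|$, which is precisely what the graph structure of $\glue$ recorded in \eqref{eqn:glue isos} guarantees.
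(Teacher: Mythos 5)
Your proof is correct and follows exactly the route the paper takes: the standard index formula $\det M \det N = [L:M\oplus N]^2\det L$ combined with the identifications $\glue_M \cong \glue \cong \glue_N$ from the graph structure of the glue. The paper's proof is just a terser statement of the same computation.
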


\begin{proof}
We divide the standard formula 
 \[\det M \det N = [L:M \oplus N]^2 \det L \]
by $[L:M\oplus N]^2$.  The isomorphisms (\ref{eq:glue H isos}) provide
$[L:M \oplus N] = |G_N|$. The lemma follows with $|\det M|=D_M$ and the same reasoning for $N$.  
\end{proof}

\begin{lemma} \label{lem:structure_glue_quotient}
Let $N \hookrightarrow L$ be a primitive embedding.
Then there is a surjection $D_L \twoheadrightarrow D_N/G_N$.
\end{lemma}
\begin{proof}
We have the following induced diagram with exact rows
\begin{displaymath}
\xymatrix{
0 \ar[r] & L \ar[r] \ar@{->>}[d] & L^{\vee} \ar[r] \ar@{->>}[d] & L^{\vee}/L = D_L \ar[r] \ar@{-->>}[d] & 0 \\
0 \ar[r] & p_N(L) \ar[r] & N^{\vee} \ar[r] & N^{\vee}/p_N(L) = D_N/G_N \ar[r] & 0
}
\end{displaymath}
where the primitivity of $N \hookrightarrow L$ gives the surjectivity of the central vertical arrow. The commutativity of the diagram then implies the desired surjection.
\end{proof}

\subsection{Extending isometries}
In this section we investigate the interplay between primitive extensions and isometries.

\begin{example}
 If $X$ is a complex K3 surface, then 
 \[\NS \oplus T \hookrightarrow H^2(X,\mathbb{Z})\]
 is a primitive extension. Let $f$ be an automorphism of $X$. It acts (by pullback) on $\NS$, $T$, $H^2(X,\mathbb{Z})$, $D_{\NS}=G_{\NS}\cong G \cong G_T=D_T$ in a compatible way. If confusion is unlikely, we will denote all these actions by $f$ or for example by $f|D_T$.
\end{example}

Clearly, an isometry $f=f_M \oplus f_N$ defined on $M \oplus N$ extends to a primitive extension $L$ if and only if $f(L/(M \oplus N))=L/(M\oplus N)$, i.e. 
$f(G_\phi)=G_\phi$.
In other words $f_M|D_M$ preserves $G_M$, $f_N|D_N$ preserves $G_N$ and $\phi \circ f_M=f_N \circ \phi$.
	\[
	\left\{ 
	\begin{matrix}
	\mbox{Primitive extensions } M\oplus N \hookrightarrow L \\
	 \mbox{ such that } f_M \oplus f_N \mbox{ extends to } L
	\end{matrix}
	\right\}
	\stackrel{1 : 1}{\longleftrightarrow}
	\left\{ 
	\begin{matrix}
	\mbox{Glue maps } \phi: G_M \xrightarrow{\sim} G_N\\
	\mbox{satisfying } \phi \circ f_M = f_N \circ \phi
	\end{matrix}
	\right\}
	\]

The following theorem uses the equivariance of the glue map to impose compatibility conditions on the minimal polynomials of the two actions.
\begin{theorem}\label{thm:glue_structure}
Let $M \oplus N \hookrightarrow L$ be a primitive extension and $f_M,f_N$ be endomorphisms of $M$ and $N$ with minimal polynomials $m(x)$ and $n(x)$. Suppose that $f_M \oplus f_N$ extends to $f:L\rightarrow L$.
Then \[d L \subseteq M \oplus N\] where $d \ZZ = (m(x) \ZZ[x] + n(x) \ZZ[x]) \cap \ZZ$.
\end{theorem}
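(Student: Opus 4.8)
The plan is to exhibit two integer-polynomial operators that move $L$ into the summands $N$ and $M$ separately, and then recombine them by a Bézout identity. Throughout I use that the hypothesis ``$f=f_M\oplus f_N$ extends to $L$'' means precisely that the $\QQ$-linear operator $f_M\oplus f_N$ on $M^\vee\oplus N^\vee$ maps the overlattice $L$ into itself; consequently $g(f)(L)\subseteq L$ for every $g\in\ZZ[x]$, and $f$ preserves each of the primitive sublattices $M$ and $N=M^\perp$.

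The key step is to show $m(f)(L)\subseteq N$ and, symmetrically, $n(f)(L)\subseteq M$. For $x\in L$ write $x=p_M(x)+p_N(x)$ with $p_M(x)\in M^\vee$ and $p_N(x)\in N^\vee$. Since $m$ is the minimal polynomial of $f_M$ on $M$, it annihilates the $\QQ$-linear extension of $f_M$ to $M\otimes\QQ\supseteq M^\vee$; hence $m(f)(x)=m(f_N)(p_N(x))$ has vanishing $M$-component, i.e.\ lies in $N\otimes\QQ$. At the same time $m(f)(x)\in L$. Because $N$ is primitive in $L$, one has $L\cap(N\otimes\QQ)=N$, so in fact $m(f)(x)\in N$. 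The same reasoning with the roles of $M$ and $N$ exchanged gives $n(f)(L)\subseteq M$.

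To conclude, I would invoke the defining property of $d$: if $d=0$ the statement $dL=0\subseteq M\oplus N$ is vacuous, and otherwise there exist $a,b\in\ZZ[x]$ with $am+bn=d$. Evaluating at $f$ yields $d\cdot\id=a(f)m(f)+b(f)n(f)$, so for $x\in L$,
\[
dx=a(f)\big(m(f)(x)\big)+b(f)\big(n(f)(x)\big).
\]
As $f$ stabilizes $M$ and $N$, so do $a(f)$ and $b(f)$; by the key step the first summand lies in $N$ and the second in $M$, whence $dx\in M+N=M\oplus N$. This proves $dL\subseteq M\oplus N$.

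The main obstacle is the integrality in the key step: one must get $m(f)(x)\in N$ rather than merely in $N^\vee$. This does not follow from $m(f_M)=0$ by itself — it genuinely uses that $f$ preserves the \emph{integral} lattice $L$ (so that $m(f)(x)\in L$) together with the primitivity of $N\subseteq L$. A secondary point to verify carefully is that the minimal polynomial of $f_M$ on $M$ still annihilates the action on the dual $M^\vee$, which holds because tensoring with $\QQ$ leaves the minimal polynomial unchanged.
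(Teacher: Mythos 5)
Your proof is correct and follows essentially the same route as the paper's: a B\'ezout identity $d=am+bn$ evaluated at $f$, combined with the observation that applying one minimal polynomial to $L$ lands (by primitivity) in the opposite summand. The only cosmetic difference is that you isolate $m(f)(L)\subseteq N$ and $n(f)(L)\subseteq M$ as a separate key step before recombining, whereas the paper folds this into the single chain $dL\subseteq u(f)n(f)L+v(f)m(f)L\subseteq \ker m(f)\oplus\ker n(f)=M\oplus N$; your explicit treatment of the case $d=0$ and of why the minimal polynomial still annihilates the dual is a welcome extra bit of care.
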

\begin{proof}
  If $m(x)$ and $n(x)$ are not coprime in $\QQ[x]$, then $d=0$, and the theorem holds.
  Assume that they are coprime in $\QQ[x]$. 
  By the definition of $d$ we can find polynomials $u,v \in \ZZ[x]$ such that
  \[d =u(x)n(x)+v(x)m(x).\]
  Then, inserting $f$ for $x$, we obtain $d \cdot \id_L=u(f)n(f)+v(f)m(f)$ as endomorphism of L.
  In the following we estimate the image of this endomorphism. Recall that $fL \defeq f(L)$ denotes the image of $f$. We note that since $n(x)$ and $m(x)$ are coprime and $M$ is primitive in $L$, $n(f)L = \ker m(f)=M$ and similary for $N$. 
  \begin{eqnarray*}
  dL & = & (u(f)n(f)+v(f)m(f))L \\
      &\subseteq & u(f)n(f)L + v(f)m(f)L\\
      & \subseteq & \ker m(f) + \ker n(f) \\
      &= & M \oplus N.
  \end{eqnarray*}
\end{proof}

\begin{corollary}\label{coro:dr-elementary}
  If $L$ is $r$-elementary, i.e. $r D_L=0$, then $M$ is $dr$-elementary, i.e.,
  \[dr  D_M=0.\]
  In particular,
  \[\det M \mid (dr)^{\rk M}.\]
  	
\end{corollary}
\begin{proof}
	We take the chain of inclusions
	\[M\oplus N \subseteq L \subseteq L^\vee \subseteq M^\vee \oplus N^\vee\]
	From Theorem \ref{thm:glue_structure} we get that $d L \subseteq M \oplus N$, and projecting this orthogonally to $M^\vee$ gives
	\[d \; p_M(L)=p_M(dL)\subseteq p_M(M \oplus N) = M.\]
	Together with Lemma \ref{lem:structure_glue_quotient} this implies
	\[dr D_M \subseteq dG_M = d p_M(L)/M=0\]
	as desired.
\end{proof}

\begin{example}
 As an example consider a non-symplectic automorphism $f$ of a K3 surface of order $4$.
 Set $L = H^2(X,\mathbb{Z})$, $m(x) = x^2+1$ and $n(x) = x^2 -1$. Since $L$ is unimodular, $r=1$. One calculates $d = 2$, and thus that $M = \ker f^2+1$ is $2$-elementary.
\end{example}

Note that $d$ divides the resultant $\res(m(x),n(x))$ and both have the same prime factors. But usually $d<res(m(x),n(x))$. For example $\res(x^2+1,x^2-1)=4$. We deduce the following  corollary. It was originally stated in \cite[Theorem 4.3]{mcmullen:minimum} for {\em unimodular} primitive extensions. 

\begin{corollary}\label{thm:resultant}
 Let $M,N,L$ be lattices and 
 \[M\oplus N \hookrightarrow L\]
 a primitive extension with glue $ G_M\cong G \cong G_N$.
 Let $f_M,f_N$ be isometries of $M$ and $N$ with characteristic polynomials
 $\chi_M$ and $\chi_N$. If $f_M\oplus f_N$ extends to $L$, then any prime dividing $|G|$ also divides the resultant $\res(\chi_M,\chi_N)$.
\end{corollary}

Since we are concerned with isometries of finite order, we give formulas for $d$ and the resultants in the case of cyclotomic polynomials. 
\begin{theorem}\cite[Thm. 1]{apostol:resultant}\cite[Thm. 1]{dresden:resultant}\label{thm:cyclotomic_resultants}
 The resultant of two cyclotomic polynomials $c_m,c_n$ with $m < n$ is given by
  \begin{equation*}
    \res(c_n,c_m)=
    \begin{cases}
     p^{\varphi(m)} &\mbox{if } n/m=p^e \mbox{ is a prime power,}\\
     1 & \mbox{ otherwise.}
     \end{cases}
  \end{equation*}
\end{theorem}

\begin{theorem}\cite[Thm. 2]{dresden:resultant}\label{thm:cyclotomic_intersection}
	\[(\ZZ[x]c_n+\ZZ[x]c_m)\cap \ZZ= \begin{cases}
		p &\mbox{if } n/m=p^e \mbox{ is a prime power,}\\
		1 & \mbox{ otherwise.}
	\end{cases}\]
\end{theorem}

\subsection{Real orthogonal transformations and the sign invariant}
In this section we review how to classify conjugacy classes of real orthogonal tranformations.
The classification works in terms of the so called sign invariant. Proofs and details can be found in
\cite[§2]{gross:unramified}.\\

We denote by $\mathbb{R}^{p,q}$ the vector space $\mathbb{R}^{p+q}$ equipped with the quadratic form
\[x_1^2+\dots+x_p^2-x^2_{p+1}- \dots -x^2_{p+q}.\]
Let $SO_{p,q}(\mathbb{R})=SO(\mathbb{R}^{p,q})$ be the Lie group of real orthogonal transformations of determinant one, preserving the quadratic form. 
If the characteristic polynomial $\chi(x)$ of $F \in SO_{p,q}(\mathbb{R})$ is of even degree $2n=p+q$ and separable, then
it is reciprocal, i.e., $x^{2n}\chi(x)=\chi(x^{-1})$. It has a \emph{trace polynomial} $r(x)$ defined by \[\chi(x)=x^nr(x+x^{-1}).\] Its roots are real of the form $\lambda + \lambda^{-1}$ where $\lambda$ is a root of $\chi(x)$. Call $\mathcal{T}$ the set of roots of $r(x)$ in the interval $(-2,2)$. They correspond to conjugate pairs of roots $\lambda+\overline{\lambda}$ of $\chi(x)$ on the unit circle. 
We have an orthogonal direct sum decomposition
\[\mathbb{R}^{p,q}=\bigoplus_{\tau \in \mathbb{R}} E_\tau, \quad E_\tau\defeq \ker(F+F^{-1}-\tau I).\]
On $E_\tau, \tau \in \mathcal{T}$, $F$ acts by rotation by angle $\theta=\arccos(\tau/2)$. Hence $E_\tau$ is either positive or negative definite. For $\tau \in \mathcal{T}$ this is encoded in the \emph{sign invariant}. 
\[\epsilon_F(\tau)=\begin{cases}+1 & \mbox{ if } E_\tau \mbox{ has signature } (2,0),\\
				-1 & \mbox{ if } E_\tau \mbox{ has signature } (0,2).
		   \end{cases}
\]
Denote by $2t$ the number of roots of $\chi(x)$ outside the unit circle. We can recover the signature via
\[(p,q)=(t,t)+\sum_{\tau \in \mathcal{T}}\begin{cases}
                     (2,0) & \mbox{ if } \epsilon_F(\tau)=+1\\
                     (0,2) & \mbox{ if } \epsilon_F(\tau)=-1\\
                    \end{cases}
\]
Two isometries $F,G\in SO_{p,q}(\mathbb{R})$ with the same characteristic polynomial are conjugate in $O_{p,q}(\mathbb{R})$ iff $\epsilon_F=\epsilon_G$.

\subsection{Lattices in number fields}\label{subsection:lattices_in_number_fields}
In this section we review the theory of lattice isometries associated to certain reciprocal polynomials as exploited in
\cite[§5]{mcmullen:minimum}. For further reading consider \cite{bayer-fluckiger:lattices_and_number_fields,bayer-fluckiger:given_characteristic_polynomial,bayer-fluckiger:ideal_lattices}.

\begin{definition}
A pair $(L,f)$ where $L$ is a lattice and $f\in O(L)$ an isometry with characteristic polynomial 
$p(x)$, is called a \emph{$p(x)$-lattice}. 
\end{definition}
We call two $p(x)$-lattices $(L,f)$ and $(N,g)$ isomorphic if there is an isometry $\alpha: L \rightarrow N$ with $\alpha \circ f = g \circ \alpha$.  

\begin{example}
 Let $X$ be a complex K3 surface and $f$ an automorphism of $X$ acting by multiplication with an $n$-th root of unity on $H^0(X,\Omega_X^2)$. If $\rk T(X)=\varphi(n)$, then $(\Trans(X),f)$ is a $c_n(x)$-lattice, where $c_n(x)$ denotes the $n$-th cyclotomic polynomial. To see this, note that 
 \[H^{2,0}\subseteq \left(\ker c_n(f^*|T)\right)\otimes_\mathbb{Z} \mathbb{C} \subseteq T \otimes \mathbb{C}.\]
 Since the kernel is defined over $\mathbb{Z}$, the equality $T=\ker c_n(f|T)$ follows from the minimality of $T$.
\end{example}

If we start with an irreducible, reciprocal polynomial $p(x) \in \mathbb{Z}[x]$ of degree $d=2e$, we can associate a $p(x)$-lattice to it as follows. Recall that $r(y)$ denotes the associated trace polynomial defined by $p(x)=x^er(x+x^{-1})$. 
Then
\[K\defeq \mathbb{Q}[f]\cong \mathbb{Q}[x]/p(x)\]
is an extension of degree $2$ of 
\[k\defeq \mathbb{Q}[f+f^{-1}]\cong \mathbb{Q}[y]/r(y)\]
with Galois involution $\sigma$ defined by $f^\sigma=f^{-1}$. We denote by $\O_K$ (resp. $\O_k$) the rings of integers of $K$ (resp. $k$) and $\mbox{Tr}^K_\QQ$ denotes the trace of $K$ over $\QQ$ 

\begin{definition}
The \emph{principal $p(x)$-lattice} is $(L_0,f_0)$ with 
\[L_0\defeq \mathbb{Z}[x]/p(x), \qquad f_0: L_0 \rightarrow L_0 \quad h\mapsto x \cdot h\]
and inner product defined by
\[\langle g_1,g_2\rangle_0\defeq \mbox{Tr}_\mathbb{Q}^K\left(\frac{g_1g_2^\sigma}{r'(x+x^{-1})}\right).\]
\end{definition}
We note that this form is even with $\left|\det L_0\right|=\left|p(1)p(-1)\right|$.

From a given $p(x)$-lattice one can construct new ones:
\begin{definition}
For $a\in \mathbb{Z}[f+f^{-1}]\subseteq \End(L)$ define a new inner product
\[\langle g_1,g_2\rangle_a\defeq \langle a g_1, g_2\rangle\]
on $L$. We denote the resulting lattice by $L(a)$, and call this operation a \emph{twist}. 
The pair $(L(a),f)$ is called a twisted $p(x)$-lattice. 
\end{definition}
We note that if $L$ is even, then so is its twist $L(a)$ \cite[4.1]{mcmullen:entropy_and_glue}. 

The situation is particularly nice if $K$ has class number one, $\mathbb{Z}[x]/p(x)$ is the full ring of integers $\mathcal{O}_K$ of $K$ and $|p(1)p(-1)|$ is square-free. In this case $p(x)$ is called a \textit{simple} reciprocal polynomial and we get the following theorem.

\begin{theorem}\cite[5.2]{mcmullen:minimum}\label{thm:principal}
 Let $p(x)$ be a simple reciprocal polynomial, then every $p(x)$-lattice is isomorphic to a twist $(L_0(a),f_0)$ of the principal $p(x)$-lattice.  
\end{theorem}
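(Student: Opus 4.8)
The plan is to recognize that a $p(x)$-lattice of rank $\deg p(x)$ is essentially a rank-one $\O_K$-lattice, and to classify the admissible inner products by means of the trace form. First I would turn $(L,f)$ into a module: since $p(x)$ is irreducible and, by the rank hypothesis, equals both the minimal and the characteristic polynomial of $f$, the rule $x\cdot\ell\defeq f(\ell)$ makes $L$ a module over $\ZZ[x]/p(x)=\O_K$. Tensoring with $\QQ$ produces a $K$-vector space of dimension $\rk L/[K:\QQ]=1$, so $L$ is isomorphic to a fractional $\O_K$-ideal $I$ spanning $K$. As $K$ has class number one, $I$ is principal, and multiplication by a generator gives an $\O_K$-linear isomorphism $\psi\colon L\to\O_K=L_0$ with $\psi\circ f=f_0\circ\psi$. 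Hence, up to this intertwining isomorphism, I may assume $L=L_0$ as a module and only the inner product distinguishes $(L,f)$ from the principal lattice.

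Next I would pin down that inner product. Writing the Galois involution as $\mu\mapsto\mu^\sigma$ with $f^\sigma=f^{-1}$, the isometry relation $\langle fu,fv\rangle=\langle u,v\rangle$ says that the adjoint of multiplication by $f$ is multiplication by $f^{-1}$, so the adjoint of any $\mu\in K$ is $\mu^\sigma$. Because the trace form is nondegenerate, there is a unique $c\in K^\times$ with $\langle u,v\rangle=\mathrm{Tr}^K_{\QQ}(c\,uv^\sigma)$, and symmetry of the pairing forces $c=c^\sigma$, i.e. $c\in k^\times$. Comparing with the principal form, whose coefficient is $c_0=1/r'(f+f^{-1})$, I set $a\defeq c\,r'(f+f^{-1})\in k$, so that $\langle u,v\rangle=\langle a u,v\rangle_0$. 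This already exhibits the transported lattice as the twist $L_0(a)$, \emph{provided} $a\in\ZZ[f+f^{-1}]$.

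The heart of the argument is therefore the integrality claim $a\in\ZZ[f+f^{-1}]$, and this is where all three ``simple'' hypotheses enter. Integrality of the form on $\O_K$ means precisely $c\in\mathfrak d^{-1}_{K/\QQ}$, the inverse different. From $\O_K=\ZZ[x]$ one first deduces $\O_k=\ZZ[f+f^{-1}]$: indeed $\O_K=\ZZ[x]=\O_k+\O_k x$ is free of rank two over both $\ZZ[f+f^{-1}]$ and $\O_k$ with basis $\{1,x\}$, and comparing coordinates with respect to the $k$-basis $\{1,x\}$ forces the two subrings to coincide. Consequently $\mathfrak d_{k/\QQ}=(r'(f+f^{-1}))$ and $\mathfrak d^{-1}_{k/\QQ}=\tfrac{1}{r'}\O_k$. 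The square-free hypothesis on $|p(1)p(-1)|=|\det L_0|$ is exactly what makes $K/k$ unramified at all finite primes (which can only ramify at primes dividing $f-f^{-1}$, whose norm to $\QQ$ is $\pm p(1)p(-1)$), so $\mathfrak d_{K/k}=\O_K$ and the tower formula gives $\mathfrak d_{K/\QQ}=\mathfrak d_{k/\QQ}\O_K$. Intersecting with $k$ then yields $c\in\mathfrak d^{-1}_{k/\QQ}=\tfrac1{r'}\O_k$, whence $a=c\,r'\in\O_k=\ZZ[f+f^{-1}]$, as needed; the isomorphism $\psi$ now identifies $(L,f)$ with $(L_0(a),f_0)$.

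I expect the ramification computation of the previous paragraph to be the main obstacle: deducing $\mathfrak d_{K/k}=\O_K$ from the square-freeness of $p(1)p(-1)$ requires a prime-by-prime analysis of the quadratic extension $K/k$ along $f-f^{-1}$, whereas the module-theoretic reduction and the trace-form normal form are essentially formal. As a by-product one sees that evenness never has to be assumed: for the symmetric element $a\in\ZZ[f+f^{-1}]$ the off-diagonal contributions $\langle f^i u,u\rangle+\langle f^{-i}u,u\rangle$ pair up, so $\langle a u,u\rangle\in 2\ZZ$ automatically and every such twist is even.
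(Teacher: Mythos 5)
The paper does not prove this statement itself---it is quoted from McMullen \cite[5.2]{mcmullen:minimum}---so your proposal can only be judged on its own merits. Your overall route is the standard (and essentially McMullen's) one: class number one turns $(L,f)$ into the module $\mathcal{O}_K$, nondegeneracy of the trace form writes the inner product as $\mathrm{Tr}^K_{\QQ}(c\,uv^{\sigma})$ with $c=c^{\sigma}$, and the ``simple'' hypotheses are used to show $a=c\,r'(f+f^{-1})$ lies in $\ZZ[f+f^{-1}]=\mathcal{O}_k$. The first two stages are fine, including the identification $\mathcal{O}_k=\ZZ[f+f^{-1}]$ and $\mathfrak{d}_{k/\QQ}=(r')$.

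The gap is in the ramification step, exactly where you predicted trouble. The claim that square-freeness of $|p(1)p(-1)|$ forces $K/k$ to be \emph{unramified} at all finite primes is false: for $p=c_5$ one has $|p(1)p(-1)|=5$, yet the unique prime of $k=\QQ(\sqrt5)$ above $5$ ramifies in $K=\QQ(\zeta_5)$, and indeed $\mathfrak{d}_{K/k}=(f-f^{-1})$ is the prime of norm $5$, not the unit ideal. (This happens for every $c_{p^k}$, i.e.\ for the central examples of the paper.) A prime can divide $(f-f^{-1})$ to the first power without violating square-freeness; what square-freeness actually rules out is $\mathfrak{P}^2\mid(f-f^{-1})$, because $N_{K/\QQ}(\mathfrak{P})^2$ would then divide $p(1)p(-1)$. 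So the correct conclusion is only that $\mathfrak{d}_{K/k}=(f-f^{-1})$ is a product of distinct ramified primes (tame ramification), and your derivation of $a\in\mathcal{O}_k$ via $\mathfrak{d}_{K/\QQ}=\mathfrak{d}_{k/\QQ}\mathcal{O}_K$ collapses. The conclusion can be rescued by one extra observation that your argument is missing: the integrality of the form gives $a\in\mathfrak{d}_{K/k}^{-1}\cap k$, and for $a\in k$ and $\mathfrak{P}$ ramified over $\mathfrak{p}$ the valuation $v_{\mathfrak{P}}(a)=2v_{\mathfrak{p}}(a)$ is \emph{even}, so the condition $v_{\mathfrak{P}}(a)\geq-1$ already forces $v_{\mathfrak{p}}(a)\geq 0$; hence $\mathfrak{d}_{K/k}^{-1}\cap k=\mathcal{O}_k$ whenever the ramification is tame. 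This parity argument (or, equivalently, the computation $r'\,\mathcal{D}_K^{-1}\cap\mathcal{O}_k=\tfrac{1}{f-f^{-1}}\mathcal{O}_K\cap\mathcal{O}_k$ appearing in Remark \ref{rmk:2principal}) is the actual content of the square-freeness hypothesis, and it also explains why the hypothesis can be weakened to ``$K/k$ tamely ramified'': only wild ramification above $2$, where $v_{\mathfrak{P}}(\mathfrak{d}_{K/k})\geq 2$, produces genuinely new twists.
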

\begin{remark}\label{rmk:2principal}
 If we drop the condition that $|p(1)p(-1)|$ is square-free, we have to allow twists in $r'(x+x^{-1})/\mathcal{D}_K\cap \mathcal{O}_k=1/(x-x^{-1})\mathcal{O}_k$, where $\mathcal{D}_K=(p'(x))\mathcal{O}_K$ is the different of $K$.
 If $K/k$ ramifies over $2$, these need not be even in general.
 Dropping the condition on the class number leads to so called ideal lattices surveyed in \cite{bayer-fluckiger:ideal_lattices}.
\end{remark}

If $\mathbb{Z}[f]\cong \mathbb{Z}[x]/p(x)$ is the full ring of integers $\mathcal{O}_K$, then the discriminant group, glue, (dual) lattice, etc. are $\mathcal{O}_K$-modules. 
\begin{lemma}\label{lem:discriminant}
 Let $p(x)$ be a simple reciprocal polynomial. Then there is an element $b \in \mathcal{O}_K$ of absolute norm $|p(1)p(-1)|$ such that $L^\vee_0=\tfrac{1}{b}\mathcal{O}_K$. If $a\in \mathcal{O}_k$ is a twist, then
 \[L_0(a)^\vee \!/L_0(a)\cong \mathcal{O}_K/ab\mathcal{O}_K\]
 as $\mathcal{O}_K$-modules. 
\end{lemma}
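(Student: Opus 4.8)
The plan is to realize the principal form as a twisted trace form on $\mathcal{O}_K$ and to compute its dual via the classical identification of the trace dual of $\mathcal{O}_K$ with the inverse different $\mathcal{D}_K^{-1}$. Writing $y=x+x^{-1}$ and $\alpha\defeq 1/r'(y)\in k$, the inner product on $L_0=\mathcal{O}_K$ reads
\[\langle g_1,g_2\rangle_0=\operatorname{Tr}_{K/\mathbb{Q}}(\alpha\, g_1 g_2^\sigma),\]
and since $\alpha\in k$ is fixed by $\sigma$ this form is symmetric. Because $\sigma$ restricts to a bijection of $\mathcal{O}_K$, an element $z\in K$ lies in $L_0^\vee$ if and only if $\operatorname{Tr}_{K/\mathbb{Q}}(\alpha z h)\in\mathbb{Z}$ for all $h\in\mathcal{O}_K$, i.e. if and only if $\alpha z\in\mathcal{D}_K^{-1}$. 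Hence
\[L_0^\vee=\alpha^{-1}\mathcal{D}_K^{-1}=r'(y)\,\mathcal{D}_K^{-1}.\]

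Next I would make this explicit. Since $p$ is simple, $\mathcal{O}_K=\mathbb{Z}[x]/p(x)$ is monogenic, so $\mathcal{D}_K=(p'(x))$ and $\mathcal{D}_K^{-1}=p'(x)^{-1}\mathcal{O}_K$. Differentiating $p(x)=x^e r(y)$ with $\tfrac{dy}{dx}=\tfrac{x^2-1}{x^2}$, and using that $x$ is a unit (forced by $p(0)=1$) so that $r(y)=0$ in $K$, gives
\[p'(x)=x^{e-2}(x^2-1)\,r'(y)\quad\text{in }K.\]
Therefore $r'(y)/p'(x)=x^{2-e}(x^2-1)^{-1}$ is a unit multiple of $(x^2-1)^{-1}$, and setting $b\defeq x^2-1=(x-1)(x+1)$ we obtain $L_0^\vee=\tfrac1b\mathcal{O}_K$. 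Its norm is read off from $p(t)=\prod_i(t-x_i)$ evaluated at $\pm1$, namely $N_{K/\mathbb{Q}}(x-1)=p(1)$ and $N_{K/\mathbb{Q}}(x+1)=p(-1)$, whence $|N_{K/\mathbb{Q}}(b)|=|p(1)p(-1)|$. This proves the first claim.

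For the twist, the form $\langle g_1,g_2\rangle_a=\langle a g_1,g_2\rangle_0=\operatorname{Tr}_{K/\mathbb{Q}}(\alpha a\, g_1 g_2^\sigma)$ with $a\in\mathcal{O}_k$ (nonzero, hence invertible in the field $K$) is treated by the identical computation, replacing $\alpha$ by $\alpha a$; this yields $L_0(a)^\vee=\tfrac{1}{ab}\mathcal{O}_K$. As the twist leaves the underlying module unchanged, $L_0(a)^\vee/L_0(a)=\tfrac{1}{ab}\mathcal{O}_K/\mathcal{O}_K$, and multiplication by $ab$ furnishes the $\mathcal{O}_K$-module isomorphism $L_0(a)^\vee/L_0(a)\cong\mathcal{O}_K/ab\mathcal{O}_K$. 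The only genuine obstacle is the derivative identity $p'(x)=x^{e-2}(x^2-1)r'(y)$: one must verify that the term $e x^{e-1}r(y)$ truly vanishes in $K$, which rests on $x$ being a unit so that $r(y)\equiv 0 \bmod p(x)$. Everything else is the standard inverse-different dictionary together with a norm evaluation.
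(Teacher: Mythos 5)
Your proof is correct, but it takes a genuinely different route from the paper's. The paper disposes of the lemma in one line: since $p(x)$ is simple, $\mathcal{O}_K$ is a PID, so the fractional ideal $L_0^\vee$ is automatically of the form $\tfrac{1}{b}\mathcal{O}_K$, and the norm of $b$ is then read off from $|L_0^\vee/L_0|=|\det L_0|=|p(1)p(-1)|$, which was recorded just before the lemma. You instead compute the dual explicitly via the inverse different: $L_0^\vee=r'(y)\mathcal{D}_K^{-1}$ together with the identity $p'(x)=x^{e-2}(x^2-1)r'(y)$ in $K$ gives $b=x^2-1=(x-1)(x+1)$ up to a unit, and the norm evaluation $N(x\mp 1)=p(\pm 1)$ finishes it. Your derivative identity is right (the term $ex^{e-1}r(y)$ dies because $x$ is a unit and $p(x)=0$ forces $r(y)=0$), and the twist computation is the same in both treatments. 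What your approach buys is significant: it does not use the class-number-one part of simplicity at all for the first claim, only monogenicity of $\mathcal{O}_K=\ZZ[x]/p(x)$, and it pins down $b$ explicitly rather than merely asserting its existence. This explicit form is in fact what the paper silently relies on later (e.g.\ $L_0(a)^\vee=\tfrac{1}{ad}\O_K$ with $d=f-f^{-1}$, a unit multiple of $x^2-1$, and the identification of the prime above $5$ as $(x-1)$ in the $c_5$ example), so your version makes those later steps self-contained. The paper's version buys only brevity.
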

\begin{proof} 
 Since $L_0^\vee\subseteq K$ is a finitely generated $\mathcal{O}_K$-module, it is a fractional ideal. By simplicity of $p(x)$ $\mathcal{O}_K$ is a PID and fractional ideals are of the form $\frac{1}{b}\mathcal{O}_K$, for some $b\in \mathcal{O}_K$. Then $L_0(a)^\vee=\frac{1}{a}L_0^\vee=\frac{1}{ab}L_0$ and 
 $D_{L_0(a)}\cong \mathcal{O}_K/ab\mathcal{O}_K$.
\end{proof}

Given a unit $u \in \mathcal{O}_K^\times$ and $a \in \mathcal{O}_K\setminus \{0\}$ the twist $L_0(uu^\sigma a)$ is isomorphic to $L_0(a)$ via $x\mapsto ux$ as $p(x)$-lattice. Conversely, if $v\in \mathcal{O}_k$ and $L_0(va)\cong L_0(a)$ as $p(x)$-lattices, then, by non-degeneracy of the trace map, we can find $u\in \mathcal{O}_K$ with $v=uu^\sigma$.
Since the cokernel of the norm map $N: \mathcal{O}_K^\times \rightarrow \mathcal{O}_k^\times$ is finite,
the associates of $a \in \mathcal{O}_k$ give only finitely many non-isomorphic twists.\\

By Lemma \ref{lem:discriminant} the prime decomposition of $a\in \mathcal{O}_k$ in $\mathcal{O}_K$ determines the $\mathcal{O}_K$-module structure of the discriminant, while twisting by a unit may change the signature (and discriminant form).\\

Let $\mathcal{T}$ denote the set of real roots 
$\tau$ of $r(y)$ in the interval $(-2,2)$. Each root $\tau \in \mathcal{T}$ corresponds to a real place  of $k=\mathbb{Q}[y]/r(y)$ that becomes complex in $K$. It is given by the embedding $\nu_\tau : k\hookrightarrow \mathbb{R}$ defined by $y \mapsto \tau$.
The \emph{sign map} is the homomorphism
\[\sign\colon \O_k^\times \rightarrow \{\pm 1\}^\mathcal{T}\]
defined by $\sign(u)_\tau=\sign(\nu_\tau(u))$.
Using the sign map, we can compute the sign invariant $\epsilon_{f_0}$ of the isometry $f_0$ of a twist $(L_0(a),f_0)$, $a \in \O_k$ of the principal lattice
\begin{equation}\label{eqn:sign_map} 
\epsilon_{f_0}(\tau)=\sign((a/r'(y))
\end{equation}
(cf. \cite[4.2]{gross:unramified}). 

All in all we have reviewed the classification of $p(x)$-lattices for $p(x)$ a simple reciprocal polynomial.
In a concrete situation the $p(x)$-lattices (of given determinant) can be enumerated by a simple computer program.

\section{Small cyclotomic fields}
Motivated by the action of a non-symplectic automorphism on the transcendental lattice of a K3 surface, we study $c_n(x)$-lattices more closely. In order to do this we review some of the general theory on cyclotomic fields. 
Our main reference is \cite{washington:introduction_to_cyclotomic_fields}.\\

For $n\in \mathbb{N}$, we denote by $K=\mathbb{Q}(\zeta_n)$ the $n$-th cyclotomic field and by $c_n(x)$ the $n$-th cyclotomic polynomial. The Euler totient function $\varphi(n)$ records the degree of $c_n(x)$.

The maximal real subfield of $K$ is $k=\mathbb{Q}[\zeta_n+\overline{\zeta}_n]$.
The rings of integers of these two fields are
\[\mathcal{O}_K=\mathbb{Z}[\zeta_n] \quad \mbox{and} \quad \mathcal{O}_k=\mathbb{Z}[\zeta_n+\overline{\zeta}_n].\]
As before denote by $\mathcal{T}$ the set of real places of $k$ that become complex in $K$. 
We have that $|\mathcal{T}|= \varphi(n)/2$.
\begin{lemma}
 The cyclotomic polynomials $c_n(x)$ are simple reciprocal polynomials for $2 \leq \varphi(n) \leq 21$, $n\neq2^d$.
\end{lemma}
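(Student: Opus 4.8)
The plan is to verify directly the three conditions defining a simple reciprocal polynomial: that $K=\QQ(\zeta_n)$ has class number one, that $\ZZ[x]/c_n(x)$ is the full ring of integers $\mathcal{O}_K$, and that $|c_n(1)c_n(-1)|$ is square-free. Irreducibility of $c_n(x)$ over $\QQ$ is classical, and reciprocity holds for every $n\geq 3$: the primitive $n$-th roots of unity occur in inverse pairs $\zeta,\zeta^{-1}$, so $c_n$ is palindromic of even degree $\varphi(n)$. The hypothesis $\varphi(n)\geq 2$ only serves to exclude $n\in\{1,2\}$, where the degree is odd. The integrality condition needs no restriction at all, since $\ZZ[\zeta_n]=\mathcal{O}_K$ holds for every $n$.

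The class number condition is where the bound $\varphi(n)\leq 21$ enters. I would enumerate the finitely many $n$ with $2\leq\varphi(n)\leq 21$ and check that each corresponding field $\QQ(\zeta_n)$ has class number one. Using $\QQ(\zeta_n)=\QQ(\zeta_{2n})$ for odd $n$ to pass to conductors, every such field appears in the classical list of cyclotomic fields of class number one; the essential point is that the smallest cyclotomic field of class number greater than one is $\QQ(\zeta_{23})$, of degree $\varphi(23)=22$, which the bound just excludes. This is a finite table lookup resting on the deep classification of cyclotomic fields of class number one.

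The square-free condition is the only place the excluded case $n=2^d$ intervenes. From the standard values of $c_n$ at $\pm1$ (for $n>1$),
\[
c_n(1)=\begin{cases} p & n=p^k,\\ 1 & \text{else},\end{cases}
\qquad
c_n(-1)=\begin{cases} p & n=2p^k,\ p \text{ odd},\\ 2 & n=2^d,\ d\geq 2,\\ 1 & \text{else},\end{cases}
\]
one reads off that whenever $n$ is not a power of $2$ the product $|c_n(1)c_n(-1)|$ is either $1$ or a single prime, hence square-free, while for $n=2^d$ it equals $2\cdot 2=4$. This is precisely why the hypothesis $n\neq 2^d$ is imposed, and one checks it is the only obstruction to simplicity within the stated range.

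The main difficulty is conceptual rather than computational: listing the $n$ with $\varphi(n)\leq 21$ and evaluating $c_n(\pm 1)$ are elementary, but certifying class number one for each of the resulting cyclotomic fields depends on their full classification. Everything else reduces to a short finite verification.
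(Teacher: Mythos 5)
Your proposal is correct and follows essentially the same route as the paper, whose entire proof is the remark that the only non-trivial point is class number one, cited from Masley's classification; your square-free computation of $|c_n(1)c_n(-1)|$ and the observations on reciprocity and $\ZZ[\zeta_n]=\mathcal{O}_K$ are exactly the routine verifications the paper leaves implicit. Your identification of $\QQ(\zeta_{23})$, of degree $22$, as the first failure of class number one correctly pinpoints why the bound $\varphi(n)\leq 21$ (hence $\leq 20$, by parity) suffices.
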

\begin{proof}
 The only non-trivial part is that the class numbers are one. This is stated in \cite[main thm.]{masely:class_number}.
\end{proof}
Note that even though $|c_{2^d}(1)c_{2^d}(-1)|=4$ is not square-free, every \emph{even} $c_{2^d}$-lattice $(2\leq d \leq 5)$ is a twist of the principal $c_{2^d}$-lattice (cf. Remark \ref{rmk:2principal}). 

\begin{lemma}\cite[Prop. 2.8]{washington:introduction_to_cyclotomic_fields}\label{lem:zeta}
 If $n\in \mathbb{N}$, has two distinct prime factors, then $(1-\zeta_n)$ is a unit in $\mathcal{O}_K$. 
\end{lemma}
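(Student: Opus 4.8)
The plan is to reduce the statement to the well-known evaluation of the cyclotomic polynomial at $1$. Recall that an element $\alpha\in\mathcal{O}_K$ is a unit if and only if its absolute norm $N_{K/\mathbb{Q}}(\alpha)$ equals $\pm1$; since $1-\zeta_n\in\mathcal{O}_K=\mathbb{Z}[\zeta_n]$, it suffices to compute this norm. First I would write $c_n(x)=\prod_{\gcd(a,n)=1}(x-\zeta_n^a)$, so that evaluating at $x=1$ gives exactly
\[ N_{K/\mathbb{Q}}(1-\zeta_n)=\prod_{\gcd(a,n)=1}(1-\zeta_n^a)=c_n(1). \]
Thus the whole lemma comes down to the claim that $c_n(1)=1$ whenever $n$ has at least two distinct prime factors.

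To establish $c_n(1)=1$ I would use the two standard recursions for cyclotomic polynomials. If $p$ is a prime dividing $m$, then $c_{pm}(x)=c_m(x^p)$, and hence $c_{pm}(1)=c_m(1)$; applying this repeatedly strips the repeated prime factors, so I may assume $n=p_1\cdots p_r$ is squarefree with $r\geq2$. If instead $p\nmid m$, then $c_{pm}(x)=c_m(x^p)/c_m(x)$. Taking $m=n/p_r$ (which is squarefree with the $r-1\geq1$ primes $p_1,\dots,p_{r-1}$) and $p=p_r\nmid m$, I evaluate at $x=1$ to get $c_n(1)=c_m(1)/c_m(1)=1$. The only point to check here is that $c_m(1)\neq0$, which holds because $c_m(1)=0$ would force $1$ to be a primitive $m$-th root of unity, i.e.\ $m=1$, whereas $m\geq p_1\geq2$. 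Combining with the norm computation gives $N_{K/\mathbb{Q}}(1-\zeta_n)=1$, so $1-\zeta_n$ is a unit.

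I expect the main (and essentially only) obstacle to be the bookkeeping in the reduction to the squarefree case together with keeping track of which recursion applies; the two recursions themselves are standard facts about $c_n(x)$ that I would cite rather than reprove. As an alternative route one can sidestep the recursions: for $\gcd(a,n)=1$ both $\tfrac{1-\zeta_n^a}{1-\zeta_n}=1+\zeta_n+\dots+\zeta_n^{a-1}$ and its inverse $\tfrac{1-\zeta_n}{1-\zeta_n^a}$ (using $ab\equiv1\bmod n$) lie in $\mathcal{O}_K$, so each $1-\zeta_n^a$ is an associate of $1-\zeta_n$; since the product of all of them is $c_n(1)$, one is again reduced to showing this product equals $1$ when $n$ is not a prime power. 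As this still requires the same computation, I would present the norm version as the cleaner of the two.
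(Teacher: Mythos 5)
Your proposal is correct and matches the argument behind the paper's own treatment: the paper gives no proof of its own but simply cites \cite[Prop.~2.8]{washington:introduction_to_cyclotomic_fields}, whose standard proof, like yours, identifies $N_{K/\mathbb{Q}}(1-\zeta_n)$ with $c_n(1)$ and shows this equals $1$ when $n$ has two distinct prime factors. Your reduction to the squarefree case and the final evaluation (including the check that $c_m(1)\neq 0$) are all sound, so there is nothing to add.
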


The kernel $\O_k^{\times +}$ of the sign map
\[\sign: \mathcal{O}_k^\times \rightarrow \{ \pm 1 \}^{\mathcal{T}} \]
is the set of \emph{totally positive units}\index{Totally positive unit} of $\O_k$. 
\begin{proposition}\cite[A.2]{shimura:cm}\label{prop:relative_class_number}
If the relative class number $h^-(K)=h(K)/h(k)$
is odd, then $\O_k^{\times +}=N^K_k(\O_K^\times)$.
\end{proposition}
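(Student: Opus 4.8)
The inclusion $N^K_k(\O_K^\times)\subseteq\O_k^{\times+}$ is the easy half. Write $G=\mathrm{Gal}(K/k)=\{1,\sigma\}$ with $\sigma$ complex conjugation. Each of the $\varphi(n)/2$ real places of $k$ becomes complex in $K$, and there $N^K_k(u)=u\,u^\sigma=|u|^2>0$ for every $u\in\O_K^\times$; hence norms of units are totally positive. Since $N^K_k(\O_K^\times)\subseteq\O_k^{\times+}\subseteq\O_k^\times$, the proposition is equivalent to the index statement $[\O_k^{\times+}:N^K_k(\O_K^\times)]=1$, which I would establish by interpreting the relevant quotients cohomologically and then counting.

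Let $P_K$ denote the group of principal fractional ideals of $K$, a $G$-module. From $1\to\O_K^\times\to K^\times\to P_K\to 1$, using Hilbert 90 (so that $\hat H^{\mathrm{odd}}(G,K^\times)=0$), the long exact sequence of Tate cohomology gives a canonical isomorphism
\[
\hat H^{-1}(G,P_K)\;\cong\;\bigl(\O_k^\times\cap N^K_k(K^\times)\bigr)/N^K_k(\O_K^\times),
\]
the units that are \emph{global} norms modulo norms of units. By the Hasse norm theorem for the cyclic extension $K/k$, a unit is a global norm iff it is a local norm at every place: at the archimedean places (all ramified) the condition is exactly total positivity, at finite unramified primes it is automatic, and $K/k$ is unramified at every finite prime as soon as $n$ has two distinct prime factors, since then $1-\zeta_n$ is a unit by Lemma~\ref{lem:zeta}. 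Thus, away from prime powers, $\O_k^\times\cap N^K_k(K^\times)=\O_k^{\times+}$, so $\hat H^{-1}(G,P_K)=\O_k^{\times+}/N^K_k(\O_K^\times)$ and it suffices to prove this group is trivial; a prime power $n$ contributes a single ramified prime whose local norm condition must be checked by hand.

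To compute the order I would assemble two inputs. First, $[\O_k^\times:N^K_k(\O_K^\times)]=\bigl|\hat H^0(G,\O_K^\times)\bigr|$ is evaluated through the Herbrand quotient of the global units, $q(\O_K^\times)=\tfrac1{[K:k]}\prod_{v\mid\infty}[K_w:k_v]=2^{\varphi(n)/2-1}$, together with $\hat H^{-1}(G,\O_K^\times)=W_K/(\sigma-1)\O_K^\times$; here $\ker\!\bigl(N^K_k|\O_K^\times\bigr)=W_K$ by Kronecker's theorem, and this group has order $1$ or $2$ according to the Hasse unit index $Q=[\O_K^\times:W_K\O_k^\times]$. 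Second, the sign map $\O_k^\times\to\{\pm1\}^{\varphi(n)/2}$ has kernel $\O_k^{\times+}$, so $[\O_k^\times:\O_k^{\times+}]$ is governed by the narrow class number via $2^{\varphi(n)/2}=[\O_k^\times:\O_k^{\times+}]\cdot\bigl(h^+(k)/h(k)\bigr)$. Combining,
\[
[\O_k^{\times+}:N^K_k(\O_K^\times)]=\frac{2^{\varphi(n)/2-1}\,\bigl|\hat H^{-1}(G,\O_K^\times)\bigr|}{[\O_k^\times:\O_k^{\times+}]},
\]
an explicit power of $2$ determined by $Q$ and by $h^+(k)/h(k)$.

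The crux is to show this power of $2$ equals $1$, i.e. to prove the parity identity tying together the Hasse unit index $Q$, the narrow index $h^+(k)/h(k)$, and the relative class number $h^-(K)$, with $h^-(K)$ odd as the decisive input. I expect this to be the main obstacle, since it is the only genuinely arithmetic step: it rests on the analytic class number formula for the CM field $K$, on Hasse's relations for the unit index, and on the $2$-adic comparison of $h(K)$, $h(k)$ and $h^+(k)$ — the hypothesis $h^-(K)\in 2\ZZ+1$ being exactly what kills the potential factor of $2$. By contrast the cohomological reduction and the local-norm verifications above are formal.
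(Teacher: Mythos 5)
The paper offers no proof of this proposition to compare against: it is imported verbatim from Shimura~\cite[A.2]{shimura:cm}. Judged on its own terms, your proposal is a strategy, not a proof. The formal bookkeeping is sound — the identification $\hat{H}^{-1}(G,P_K)\cong\bigl(\O_k^\times\cap N^K_k(K^\times)\bigr)/N^K_k(\O_K^\times)$ via Hilbert 90, the Herbrand quotient $q(\O_K^\times)=2^{\varphi(n)/2-1}$, the relation between $\hat{H}^{-1}(G,\O_K^\times)$ and the Hasse unit index $Q$, and the narrow-class-number identity are all correct, and together they reduce the statement to the single equality $h^+(k)/h(k)=Q$ (your ``power of $2$ equals $1$''). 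But that equality is precisely where the hypothesis that $h^-(K)$ is odd must enter, and you stop there, explicitly calling it ``the main obstacle.'' Nothing written actually uses the oddness of $h^-(K)$; the entire arithmetic content of the proposition is deferred to a parity identity you name but do not establish. That is a genuine gap, not a detail.

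There is a second gap that matters specifically for this paper. Your identification of $\O_k^\times\cap N^K_k(K^\times)$ with $\O_k^{\times+}$ via the Hasse norm theorem is only justified when $K/k$ is unramified at all finite primes, i.e.\ when $n$ is not a prime power. For $n=p^a$ the unique prime $\mathfrak{p}$ of $k$ above $p$ ramifies in $K$, the local norm group has index $2$ in $k_{\mathfrak{p}}^\times$, and a totally positive unit is not automatically a local norm there; you flag this as ``to be checked by hand'' and leave it. Since the proposition is invoked (through Corollary~\ref{lem:sign_invariant_injective} and Proposition~\ref{prop:p-lattice_isomorphism_class}) chiefly for prime-power $n$ such as $3^k,5^l,7,11,13,17,19,25,27$, the unhandled case is the main one, not a corner case. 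To make this a proof you would need both the local computation at the ramified prime and the identity $h^+(k)=Q\,h(k)$ for CM fields with odd relative class number; otherwise the honest course is the one the paper takes, namely citing Shimura.
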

\begin{corollary}\label{coro:sign_invariant_injective}
 Let $n\in \mathbb{N}$ with $\varphi(n)\leq 20$, and $K\defeq \mathbb{Q}(\zeta_n)$ be the $n$-th cyclotomic field. Then the sign map
 \[\sign: \mathcal{O}_k^\times/N(\mathcal{O}_K) \rightarrow \{ \pm 1 \}^{\mathcal{T}} \]
 is injective. 
\end{corollary}
\begin{proof}
As $\QQ[\zeta_n]$ is a PID for $\varphi(n)\leq 20$, the relative class number is one and we may apply Proposition \ref{prop:relative_class_number}.

\end{proof}
The first cases where the relative class number is even is for $n=39,56,29$. There $h^-(\QQ[\zeta_{n}])=2,2,2^3$ (cf. \cite[§3]{washington:introduction_to_cyclotomic_fields}), and the sign map has a kernel of order $2,2,2^3$ as well. 

\begin{proposition}\label{prop:p-lattice_isomorphism_class}
 The isomorphism class of a $c_n(x)$-lattice $(L,f)$, with $2\leq \rk L=\varphi(n)\leq 20$ is given by the kernel of 
 \[\mathbb{Z}[x]/c_n(x)\rightarrow O(L^\vee \!/L), \quad x \mapsto f|L^\vee \!/L\]
 and the sign invariant of $f$.
\end{proposition}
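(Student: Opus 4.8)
The plan is to reduce the classification of $c_n(x)$-lattices to the classification of twists of the principal lattice, and then to match the two invariants appearing in the statement against the parametrization of twists. First I would apply Theorem \ref{thm:principal} to write $(T,f)\cong(L_0(a),f_0)$ for some twist $a\in\mathcal{O}_k$; in the non-square-free cases $n=4,8,16,32$ (where $|c_n(1)c_n(-1)|=4$) the same conclusion holds with $a$ ranging over the enlarged module of Remark \ref{rmk:2principal}, using the observation recorded after that remark that every even $c_{2^d}$-lattice is still a twist of the principal one. Thus every isomorphism class of $(T,f)$ of rank $\varphi(n)$ arises from a twist, and the problem becomes: decide when $L_0(a)\cong L_0(a')$ as $c_n(x)$-lattices.

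Next I would extract the first invariant from the discriminant. By Lemma \ref{lem:discriminant}, $\discgroup(L_0(a))\cong\mathcal{O}_K/ab\mathcal{O}_K$ as $\mathcal{O}_K$-modules, where $b$ (hence the ideal $b\mathcal{O}_K=(L_0^\vee)^{-1}$) depends only on $c_n(x)$. The map $\mathbb{Z}[x]/c_n(x)\to O(T^\vee\!/T)$, $x\mapsto\overline{f}$, encodes the $\mathcal{O}_K$-action on the cyclic module $\discgroup(T)$, so its kernel is precisely the annihilator $\operatorname{Ann}_{\mathcal{O}_K}(\discgroup(T))=ab\mathcal{O}_K$. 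Since $b\mathcal{O}_K$ is fixed, knowing this kernel is equivalent to knowing the ideal $a\mathcal{O}_K$, and therefore pins down $a$ up to a unit of $\mathcal{O}_k^\times$: if $a,a'\in\mathcal{O}_k$ generate the same $\mathcal{O}_K$-ideal, then $a'/a\in\mathcal{O}_K^\times\cap k=\mathcal{O}_k^\times$. So the kernel fixes $a$ modulo the action of $\mathcal{O}_k^\times$, and the residual ambiguity is exactly the set of twists $\{va:v\in\mathcal{O}_k^\times\}$.

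Then I would resolve this residual ambiguity with the sign invariant. The isomorphism criterion for twists stated above gives $L_0(va)\cong L_0(a)$ (for $v\in\mathcal{O}_k^\times$) if and only if $v\in N^K_k(\mathcal{O}_K^\times)$, while the formula $\epsilon_{f_0}(\tau)=\sign_\tau(a/r'(y))$ shows that passing from $a$ to $va$ multiplies the sign invariant by $\sign(v)=(\sign_\tau(v))_\tau$. The crucial input is Corollary \ref{lem:sign_invariant_injective}: for $\varphi(n)\leq 20$ the homomorphism $\sign:\mathcal{O}_k^\times/N(\mathcal{O}_K)\to\{\pm1\}^{\varphi(n)/2}$ is injective. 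Since relative norms are totally positive, this yields the chain of equivalences $\sign(v)=1\iff v\in N^K_k(\mathcal{O}_K^\times)\iff L_0(va)\cong L_0(a)$. Hence, once the kernel has fixed $a\mathcal{O}_K$, the sign invariant detects $a$ within its $N^K_k(\mathcal{O}_K^\times)$-orbit and thus determines the isomorphism class. Combining the two steps gives the proposition.

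The crux is the injectivity in Corollary \ref{lem:sign_invariant_injective}, which is exactly where the hypothesis $\varphi(n)\leq 20$ is indispensable: it forces $\mathbb{Q}(\zeta_n)$ to be a PID, so the relative class number is odd (here, one), and Proposition \ref{prop:relative_class_number} then identifies the totally positive units $\mathcal{O}_k^{\times+}$ with $N^K_k(\mathcal{O}_K^\times)$. Secondary care is needed on two technical points: making precise that ``the kernel'' of the displayed map means the annihilator ideal of the cyclic discriminant module (so that it genuinely records $a\mathcal{O}_K$), and treating the powers-of-two cases $n=4,8,16,32$, where the twists must be taken in the module $\tfrac{1}{x-x^{-1}}\mathcal{O}_K\cap\mathcal{O}_k$ of Remark \ref{rmk:2principal} rather than in $\mathcal{O}_k$ itself. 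Neither affects the logic above, but both should be flagged explicitly in the write-up.
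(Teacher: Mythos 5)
Your proposal is correct and follows essentially the same route as the paper's proof: reduce to twists of the principal lattice via Theorem \ref{thm:principal}, recover the twist ideal from the $\mathcal{O}_K$-module structure of the discriminant (Lemma \ref{lem:discriminant}), and separate the remaining unit twists by the sign invariant via Corollary \ref{lem:sign_invariant_injective}. Your write-up is in fact more careful than the paper's three-line argument on the two points you flag — the reading of ``kernel'' as the annihilator ideal, and the $2$-power cases covered by Remark \ref{rmk:2principal} — both of which the paper leaves implicit.
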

\begin{proof}
 By Theorem \ref{thm:principal}, $(L,f)$ is isomorphic to a twist $(L_0(a),f_0)$ of the principal $c_n(x)$-lattice where $a\in \O_k$. Lemma \ref{lem:discriminant} shows that the $\mathcal{O}_K$-module structure of the discriminant determines the prime decomposition of $a$. Thus we can write $a=ub$ for some fixed $b$ with a unit $u \in O_k^\times$. Now, the isomorphism class of $(L_0(a),f_0)$ depends on $u$ only modulo the image of the norm map $N: \O_K^\times \rightarrow \O_k^\times$. But by Corollary \ref{coro:sign_invariant_injective} this is captured by the sign map $\sign(u)$ which in turn computes the sign invariant $\epsilon_{f_0}$ of $f_0$ by (\ref{eqn:sign_map}).
\end{proof}
 
\section{Transcendental cycles and uniqueness of $X$ up to isomorphism.}
In this section we prepare the proof of Theorem \ref{thm:main}.
\begin{proposition}\label{prop:main}
 Let $X_i$, $ i \in \{1,2\}$ be two complex K3 surfaces and $f_i \in\Aut(X_i)$ automorphisms with $f_i^*(\omega_i)=\zeta_n\omega_i$ on $\CC \omega_i = H^0(X_i,\Omega_{X_i}^2)$ where $\zeta_n=e^{2\pi i/n}$. Suppose that
 \begin{itemize}
  \item[(1)] $\rk \Trans(X_i)=\varphi(n)$ and
  \item[(2)] $\Trans(X_i) \hookrightarrow L_{K3}$ uniquely.
 \end{itemize}
 Then there exists an isomorphism $X_1 \cong X_2$ if and only if $I_1 = I_2$ where $I_i$ is the kernel of 
 \[\mathbb{Z}[x]/c_n(x) \rightarrow O(\Trans(X_i)^\vee \!/\Trans(X_i),\qquad x\mapsto f_i|\Trans(X_i)^\vee \! / \Trans(X_i).\] 
\end{proposition}
\begin{proof}
 Let $X_i$, $i\in \{1,2\}$ be two K3 surfaces and $f_i$ be automorphisms as in the theorem. Let $T_i = \Trans(X_i)$.
 Set $\tau=\zeta_n+\zeta_n^{-1}$ and $E_{\tau,i}=\ker(f_i|_{T_i}+f|_{T_i}^{-1}-\tau id_{T_i})$. Looking at $\omega_i,\overline{\omega_i} \in E_{\tau,i}\otimes \mathbb{C}$ with $\omega_i.\overline{\omega_i}>0$, we see that $E_{\tau,i}$ has signature $(2,0)$. Since the signature of $T_i$ is $(2,\varphi(n)-2)$, this determines the sign invariants of $(T_1,f_1)$ and $(T_2,f_2)$ which is recorded by the complex $n$-th root of unity $\zeta_n$.
 By assumption the discriminants of $(T_i,f_i)$ have the same $\mathcal{O}_K$-module structure and Proposition \ref{prop:p-lattice_isomorphism_class} implies that $(T_1,f_1)\cong(T_2,f_2)$ as $c_n(x)$-lattices. 

 Hence, we can find an isometry $\psi_T:T_1 \rightarrow T_2$ such that $f_2 \circ \psi_T=\psi_T \circ f_1$.
 The latter condition assures that $\psi_T$ is compatible with the eigenspaces of $f_{1}$ and $f_2$. Since $\rk T_i=\varphi(n)$, the eigenspaces of $f_i|T_i$ for $\zeta_n$ are $H^{2,0}(X_i)$. In particular, \[\psi_T(H^{2,0}(X_1))=H^{2,0}(X_2).\]
 Now, choose markings $\phi_i$ of $X_i$.
 They provide us with two embeddings $\phi_1$ and $\phi_2 \circ \psi$ of $T_1$ into $L_{K3}$. By assumption (2) any two embeddings are isomorphic. That is, we can find $\psi\in O(L_{K3})$ such that the following diagram commutes. 
\[ \begin{tikzcd}
  T_1 \arrow{r}{f_1} \arrow{d}{\psi_T} & T_1 \arrow{d}{\psi_T}\arrow[draw=none]{r}[sloped,auto=false]{\subseteq} 	&[-23pt]H^2(X_1,\mathbb{Z})\arrow{r}{\phi_1} &L_{K3}\arrow{d}{\exists \psi} \\
  T_2 \arrow{r}{f_2}       & T_2 \arrow[draw=none]{r}[sloped,auto=false]{\subseteq} 			        &H^2(X_2,\mathbb{Z})\arrow{r}{\phi_2} &L_{K3} 
 \end{tikzcd}\]
 By construction $\phi_2^{-1} \circ \psi \circ \phi_1$ is a Hodge isometry. By the weak Torelli Theorem $X_1$ and $X_2$ are isomorphic.
 Conversely, let $f_1$, $f_2 \in \Aut(X)$ with $f_1^*\omega=f_2^*\omega=\zeta_n \omega$. Note that $f_1\circ f_2^{-1}$ is symplectic. Then $(f_1\circ f_2^{-1})|T=id_T$ ,i.e., $f_1|T=f_2|T$ and in particular $I_1=I_2$.
\end{proof} 

\begin{remark}
 Replacing $f$ by a power $f^k$ with $k$ coprime to $n$, we can fix the action on the 2-forms. This corresponds to the Galois action $\zeta_n \mapsto \zeta_n^k$ on $\mathbb{Q}(\zeta_n)$. 
 In case the embedding is not unique, one can fix the isometry class of $\NS$. Then isomorphism classes of primitive embeddings with $T^\perp = \NS$ are given by glue maps $\phi:T^\vee \!/T \rightarrow \NS^\vee \!/\! \NS$ with
 $-q_T=q_{\NS} \circ \phi$ modulo the action of $O(\NS)$ on the left. We can also allow for an action of the centralizer of $f|T$ in $O(T)$ on the right.  
 \end{remark}

Let $n=\prod_i p_i^{e_i}$ be a natural number given in prime factorization.
We call $\rad(n)=\prod_i p_i$ the radical of $n$.
\begin{proposition}\label{prop:det}
 Let $f$ be a non-symplectic automorphism acting with order $n$ on the global 2-forms of a complex K3 surface $X$ with
 $\varphi(n)=\rk \Trans(X)$. Then 
 \[\det \Trans(X) \mid\res(c_n,\mu)\]
 where $\mu(x)$ is the minimal polynomial of $f|\NS(X)$.
 If $\res(c_n,\mu)\neq 0$ and $f$ is purely non-symplectic, then $\Trans(X)$ is $\rad(n)$-elementary, i.e. $\rad(n) D_{\Trans(X)}=0$.
 \end{proposition}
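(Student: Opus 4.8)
The plan is to handle the two assertions separately, working throughout with the primitive extension $\NS(X)\oplus T\hookrightarrow H^2(X,\ZZ)=:L$. Here $L$ is unimodular, so $\discgroup_T\cong \glue\cong\discgroup_{\NS}$ as $f$-modules, and $(T,f)$ is a $c_n(x)$-lattice of rank $\varphi(n)$, so that the minimal \emph{and} characteristic polynomial of $f|T$ equals $c_n(x)$. I will write $\mu$ for the minimal polynomial of $f|\NS$.

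For the divisibility $\det T\mid\res(c_n,\mu)$, I would exploit that $\mu(f)$ annihilates $\NS$. Since $f$ preserves both $T$ and $\NS$, it acts block-diagonally on $T\otimes\QQ\oplus\NS\otimes\QQ$, so for $\ell\in L$ one has $\mu(f)\ell=\mu(f|T)\,p_T(\ell)+\mu(f|\NS)\,p_{\NS}(\ell)=\mu(f|T)\,p_T(\ell)$; as $f\in O(L)$ this lies in $L\cap(T\otimes\QQ)=T$ by primitivity. Because $L$ is unimodular the projection satisfies $p_T(L)=T^\vee$, so the above reads $\mu(f|T)(T^\vee)\subseteq T$. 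Now I would chase indices along $\mu(f|T)T\subseteq\mu(f|T)T^\vee\subseteq T\subseteq T^\vee$: multiplication by the ($\QQ$-invertible) operator $\mu(f|T)$ preserves indices, so $[\mu(f|T)T^\vee:\mu(f|T)T]=[T^\vee:T]=|\det T|$, while $[T:\mu(f|T)T]=|\det\mu(f|T)|=\prod_{c_n(\alpha)=0}|\mu(\alpha)|=|\res(c_n,\mu)|$. Multiplying the two indices gives $|\det T|\cdot[T:\mu(f|T)T^\vee]=|\res(c_n,\mu)|$, whence $\det T\mid\res(c_n,\mu)$ (the case $\res(c_n,\mu)=0$ being vacuous). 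An equivalent route is to note that $T$ is a fractional $\mathcal{O}_K=\ZZ[\zeta_n]$-ideal, so $\discgroup_T\cong\mathcal{O}_K/\mathfrak c$ is cyclic with $\mu(\zeta_n)\in\mathfrak c$, and then take norms.

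For the second assertion, I would first observe that $f$ purely non-symplectic forces its order to be exactly $n$, so $f|\NS$ has order dividing $n$ and $\mu\mid x^n-1$; together with $\res(c_n,\mu)\neq0$ (hence $c_n\nmid\mu$) this gives $\mu\mid h$, where $h(x)=(x^n-1)/c_n(x)$. Theorem \ref{thm:glue_structure} then yields $dL\subseteq\NS\oplus T$, i.e.\ $d\,\glue=0$ and so $d\,\discgroup_T=0$, where $d\ZZ=(c_n\ZZ[x]+\mu\ZZ[x])\cap\ZZ$. It therefore suffices to show $d\mid n$. Reducing modulo $c_n(x)$ identifies this ideal with $(\mu(\zeta_n))\cap\ZZ$ inside $\mathcal{O}_K$, so I must show $n\in(\mu(\zeta_n))$, i.e.\ $\mu(\zeta_n)\mid n$. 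The clean way is to differentiate $x^n-1=c_n(x)h(x)$ and evaluate at $\zeta_n$: since $c_n(\zeta_n)=0$ one gets $c_n'(\zeta_n)\,h(\zeta_n)=n\zeta_n^{-1}$, and as $c_n'(\zeta_n)\in\mathcal{O}_K$ and $\zeta_n$ is a unit this shows $h(\zeta_n)\mid n$, hence $\mu(\zeta_n)\mid h(\zeta_n)\mid n$. Thus $d\mid n$ and $n\discgroup_T=0$, as claimed.

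The main obstacle is the first assertion, namely translating the lattice invariant $\det T$ (the size of the glue) into the purely algebraic quantity $\res(c_n,\mu)$. The key point that makes the index argument go through is that unimodularity upgrades the projection of $L$ into $T^\vee$ to the equality $p_T(L)=T^\vee$, so that $\mu(f|T)$ genuinely carries $T^\vee$ into $T$; the rest is bookkeeping with finite-index sublattices. In the second part the only delicate step is passing from $(c_n\ZZ[x]+\mu\ZZ[x])\cap\ZZ$ to divisibility by $n$, which the derivative identity handles at once; alternatively one can argue prime by prime using Theorems \ref{thm:cyclotomic_resultants} and \ref{thm:cyclotomic_intersection}.
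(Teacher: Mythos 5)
Your proof is correct, and both halves take routes that differ from the paper's. For the divisibility $\det T\mid\res(c_n,\mu)$ the paper works directly with the $\mathcal{O}_K$-module structure: it writes $\discgroup_{T}\cong\mathcal{O}_K/I$, notes that $\mu(\overline f)=0$ on $\discgroup_{T}$ forces $\mu(\zeta_n)\in I$, and concludes $|\det T|=N(I)\mid N(\mu(\zeta_n))=\res(c_n,\mu)$ --- exactly the ``equivalent route'' you sketch at the end of your first paragraph. Your primary argument, $\mu(f|T)(T^\vee)\subseteq T$ followed by multiplicativity of indices along $\mu(f|T)T\subseteq\mu(f|T)T^\vee\subseteq T\subseteq T^\vee$, is a purely lattice-theoretic rendering of the same fact and has the mild advantage of not invoking the identification $\ZZ[x]/c_n(x)\cong\mathcal{O}_K$ at this stage. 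The genuine divergence is in the $n$-elementarity claim. The paper bounds $\res(c_n,\mu)\mid\res\bigl(c_n,\prod_{k<n}c_k\bigr)\mid n^{\varphi(n)}$ via Theorem \ref{thm:cyclotomic_resultants} and then upgrades the norm bound $N(I)\mid n^{\varphi(n)}$ to $I\mid n$ using the fact that for $\varphi(n)\leq 20$ there is a unique prime of $\QQ(\zeta_n+\zeta_n^{-1})$ above each $p\mid n$. You instead apply Theorem \ref{thm:glue_structure} to get $d\,\discgroup_{T}=0$ with $d\ZZ=(c_n\ZZ[x]+\mu\ZZ[x])\cap\ZZ=\mu(\zeta_n)\mathcal{O}_K\cap\ZZ$, and then show $\mu(\zeta_n)\mid h(\zeta_n)\mid n$ by differentiating $x^n-1=c_n(x)h(x)$. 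This is arguably cleaner: it produces the annihilator directly, requires no knowledge of splitting behaviour in the real subfield, and is independent of the restriction $\varphi(n)\leq 20$ on which the paper's ramification remark implicitly relies; it uses the same essential input (pure non-symplecticity forces $\mathrm{ord}(f)=n$, hence $\mu\mid(x^n-1)/c_n$) that the paper uses when it bounds $\mu$ by $\prod_{k<n}c_k$.
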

\begin{proof}  
  If $c_n$ and $\mu$ have a common factor, then $\res(c_n,\mu)=0$, and the statement is certainly true. 
  We may assume that $\gcd(c_n,\mu)=1$. Then we know that \[\Trans(X)=\ker c_n(f|H^2(X,\mathbb{Z}))\]
  and we can view $(\Trans(X),f)$ as a $c_n(x)$-lattice. Then $D_{\Trans(X)}\cong \mathcal{O}_K/I$, $K=\mathbb{Q}(\zeta_n)$, for some ideal $I<\mathcal{O}_K$. The isomorphisms $D_{\NS(X)}\cong D_{\Trans(X)} \cong \mathcal{O}_K/I$ are compatible with $f$. In particular $\mu(f|\NS(X))=0$ implies that
  $\mu(f|D_{\Trans(X)})=0$, i.e., $\mu(\zeta_n) \in I$. By definition of the norm $N$ and resultant
  \[|\det \Trans(X)|=|\mathcal{O}_K/I|=N(I) \mid N(\mu(\zeta_n))=\res(c_n,\mu).\]
 It remains to prove that $nD_{\Trans(X)}=0$. By Corollary \ref{coro:dr-elementary} it is enough to show that 
 \begin{equation}\label{eqn:intersection}
 	\rad(n) = \ZZ \subseteq \left(\ZZ[x]c_n+\ZZ[x]\mu\right)\cap \ZZ.
 \end{equation}
 Since $f$ is assumed to be purely non-symplectic, we may replace $\mu$ by $\prod_{d\mid n,d\neq n} c_d= \frac{x^n -1}{c_n(x)}$. The equation can now be checked with the help of a computer algebra system for any $n$ with $\varphi(n)\leq 20$. We used sageMath \cite{sage} for this.
\end{proof}

\begin{corollary}\label{coro:n_restrictions}
Suppose that $\rk \Trans(X)=\varphi(n)$ and $f$ is purely non-symplectic with $\gcd\left(\mu_{f|\NS(X)},c_n\right)=1$. Set $\mu = \mu_{f|\NS(X)}$.
Then we have the following restrictions on $\Trans(X)$:
 \begin{enumerate}
  \item $\Trans(X)$ has signature $(2,\varphi(n)-2)$;
  \item $2\leq \varphi(n)\leq 20$;
  \item $(x-1) \mid \mu$  and $\mu \mid \prod_{k < n}c_k$;
  \item$\deg \mu\leq 22-\varphi(n)$ and $\det \Trans(X) \mid\res(c_n,\mu)$;
  \item $\exists b \in \mathcal{O}_k$ such that $\Trans(X) \cong L_0(b)$ is a twist of the principal $c_n(x)$-lattice.
  \end{enumerate}
  The resulting determinants are listed in Table \ref{tbl:possible_dets}. 
\end{corollary}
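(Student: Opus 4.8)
The plan is to assemble the four stated restrictions from the structural results already proved, treating each item in turn; the only genuinely laborious part is the final tabulation of determinants.

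First I would pin down the order of $f$. Since $f$ is purely non-symplectic and acts on $H^{2,0}(X)$ by $\zeta_n$, the order of $f|H^{2,0}$ is exactly $n$; if $f$ had order $N>n$ as an automorphism, then $n\mid N$ and $f^n$ would be a nontrivial power acting trivially on the $2$-form, hence a nontrivial symplectic power, contradicting pure non-symplecticity. So $f$ has order $n$ and $(f|\NS)^n=\id$. Consequently the minimal polynomial $\mu$ of $f|\NS$ divides $x^n-1=\prod_{k\mid n}c_k$, and the hypothesis $\gcd(\mu,c_n)=1$ forces $c_n\nmid\mu$, so $\mu\mid\prod_{k\mid n,\,k\neq n}c_k$, giving the divisibility $\mu\mid\prod_{k<n}c_k$ of (2). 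The factor $(x-1)\mid\mu$ comes from projectivity: summing an ample class over its $f$-orbit, $\sum_{j=0}^{n-1}(f^j)^*h$, produces a nonzero $f$-fixed ample class in $\NS$, so $1$ is an eigenvalue of $f|\NS$. The degree bound $\deg\mu\le 22-\varphi(n)$ is immediate from $\deg\mu\le\rk\NS=22-\rk T=22-\varphi(n)$, and the divisibility $\det T\mid\res(c_n,\mu)$ is exactly Proposition \ref{prop:det}.

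For (0) and (1) I would use the Hodge-theoretic description of $T$. Because $X$ is projective, $\NS$ has signature $(1,\rho-1)$ and $T=\NS^\perp$ inside $H^2(X,\ZZ)$ of signature $(3,19)$, so $T$ has signature $(2,\varphi(n)-2)$, which is (0). The positive part being $2$-dimensional forces $\varphi(n)\ge 2$, and embedding a $(2,\varphi(n)-2)$-lattice into one of signature $(3,19)$ forces $\varphi(n)-2\le 19$, giving $2\le\varphi(n)\le 21$, which is (1). For (3) I observe that $(T,f|T)$ is a $c_n(x)$-lattice of rank $\varphi(n)=\deg c_n$, since $T=\ker c_n(f|T)$ by minimality of $T$ as in the earlier example. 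Because $c_n$ is a simple reciprocal polynomial for $2\le\varphi(n)\le 21$, $n\neq 2^d$ — and every even $c_{2^d}$-lattice is still a twist of the principal lattice by Remark \ref{rmk:2principal} — Theorem \ref{thm:principal} yields $T\cong L_0(b)$ for some $b\in\O_k$.

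The hard part is the last sentence: producing Table \ref{tbl:possible_dets}. Here I would enumerate, for each $n$ with $2\le\varphi(n)\le 21$, the admissible minimal polynomials $\mu$ — monic products of distinct cyclotomic factors $c_k$ with $k\mid n$, $k\neq n$, divisible by $(x-1)$ and of degree at most $22-\varphi(n)$ — and for each compute $\res(c_n,\mu)=\prod_k\res(c_n,c_k)$ via Theorem \ref{thm:cyclotomic_resultants}, recording the divisors as the possible values of $|\det T|$. This is mechanical but must be organized carefully so as not to miss cases; the cyclotomic resultant formula makes each factor a prime power $p^{\varphi(k)}$ arising exactly when $n/k$ is a prime power, which keeps the resulting list finite and short.
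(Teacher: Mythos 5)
Your treatment of items (0)--(3) is correct and follows the same route as the paper: (0) and (1) from projectivity and the signature of $L_{K3}$, (2) from $\mu\mid x^n-1$ together with Proposition \ref{prop:det} (your observation that pure non-symplecticity forces the order of $f$ to equal $n$, and the averaging argument for $(x-1)\mid\mu$, are details the paper leaves implicit but they are right), and (3) from simplicity of $c_n$ and Theorem \ref{thm:principal}.

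The gap is in your plan for the table. ``Recording the divisors'' of $\res(c_n,\mu)$ produces a strictly larger set than Table \ref{tbl:possible_dets}. Two further constraints are needed, and the paper uses both in its model computation for $n=28$. First, by item (3) and Lemma \ref{lem:discriminant} the discriminant group of $T$ is $\mathcal{O}_K/ab\mathcal{O}_K$ for a twist $a\in\mathcal{O}_k$, so $\det T$ is not an arbitrary divisor of the resultant: its $p$-part can only jump in steps of the norm of the (unique, by Lemma \ref{lem:unique_table}) prime of $\mathcal{O}_k$ above $p$. For $n=28$ this is why the $2$-part is $1$ or $2^6$ and nothing in between. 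Second, one must check that a twist with the prescribed determinant actually exists with signature $(2,\varphi(n)-2)$; this is a sign-invariant condition on the unit class of the twist and it eliminates entries your enumeration would keep --- e.g.\ $(28,7^2)$ and $(28,2^67^2)$ are excluded precisely because no twist of signature $(2,10)$ has these determinants, and the row $n=60$ is empty for the same reason. Without these two pruning steps the ``mechanical'' tabulation does not reproduce the table.
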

\begin{proof}
 (1) and (2) are clear. (3) Since $f$ preserves an ample divisor $\mu$ is divided by $(x-1)$. For the second part note that $f$ is of order $n$ and $\mu$ not divided by $c_n$.
 (4) This is Proposition \ref{prop:det}. 
  (5) By assumption $T(X)$ is a $c_n(x)-$lattice of rank $\deg c_n$. For $2\leq \varphi(n)\leq 21$ all cyclotomic polynomials $c_n(x)$ are simple and Theorem \ref{thm:principal} provides the claim.
  It remains to compute the values of Table \ref{tbl:possible_dets}. This is easily done with the help of a computer algebra system. The author used SageMath \cite{sage} for this purpose.
 To illustrate the computation we do it for $n=28$.
 By Theorem \ref{thm:cyclotomic_resultants} a factor $c_k(x)$ of $\mu(x)$ will contribute to the resultant if and only if $n/k$ is a prime power. 
 Hence, the only possibilities are $c_4,c_7,c_{14}, c_4c_7, c_4c_{14}$ which are of degree $2,6,6,8,8$ and give resultants $7^2,2^6,2^6,2^6 7^2,2^6 7^2$. 
 The principal $c_{28}(x)$-lattice is unimodular. Hence $|\det \Trans(X)| = |\det L_0(b)| = |N^K_\QQ(b)|$.
 We investigate the prime factorization of $b$. 
 The unique prime dividing $2$ in $\O_k$ has norm $2^6$ when viewed in $\O_K$. Similiarly, the unique prime above $7$ is of norm $7^2$. These are the only possible prime factors of $b$ which results in the $4$ possible determinants
 $1,2^6,7^2$ or $2^6 7^2$. We can exclude $7^2$ and $2^6 7^2$ since a computation shows that there is no twist $b$ of the right signature $(2,10)$. This leaves us with determinants $1$ and $2^6$.  
\end{proof}

\begin{table}[ht]
\caption{Possible determinants of the transcendental lattice}\label{tbl:possible_dets}
\centering
{
\renewcommand{\arraystretch}{1.3}
\rowcolors{1}{}{lightgray}
\begin{tabular}[t]{lll}
 \toprule
  $n$ &$\varphi(n)$ &$\det T$ \\
 \hline
  $3,6  $	&$2 $&$3 $\\

  $4  $	&$2 $&$2^2 $\\

  $5,10  $&$4$&$5 $\\

  $7,14  $&$6 $&$7$\\
  
  $8  $&$4 $&$2^2,2^4$\\
  
  $9,18  $&$6 $&$3,3^3 $\\
  
  $11,22  $&$10 $&$11$\\
  
  $12  $&$4 $&$1,2^23^4,2^4$\\

  $13,26  $&$12 $&$13 $\\
  
  $15,30  $&$8 $&$5^2,3^4 $\\
  
  $16  $&$8 $&$2^2,2^4,2^6,2^8$\\

  $17,34  $&$16 $&$17 $\\

  $19,38  $&$18$&$19 $\\

  \bottomrule
  \end{tabular}
\rowcolors{1}{}{lightgray}
 \begin{tabular}[t]{lll}
 \toprule
  $n$ &$\varphi(n)$ &$\det T$ \\
 \hline 
  $20  $&$8 $&$2^4,2^4 5^2$\\
  
  $21,42  $&$12 $&$1,7^2$\\

  $24$ &$8$ &$2^2,2^6,2^2 3^4,2^6 3^4$\\
  
  $25,50  $&$20 $&$5$\\
  
  $27,54  $&$18 $&$3,3^3 $\\
  
  $28  $&$12 $&$1,2^6$\\
  
  $32  $&$16 $&$2^2,2^4,2^6$\\
  
  $33,66  $&$20 $&$1 $\\
  
  $36  $&$12 $&$1,3^4,2^6 3^2$\\
  
  $40  $&$16 $&$ 2^4 $\\
  
  $44  $&$20 $&$1$\\
  
  $48  $&$16 $&$2^2$\\
  
  $60  $&$16 $&$-$ \\
  \bottomrule
  \end{tabular}
}
\end{table}

\begin{lemma}\label{lem:unique_table}
 Let $2\leq \varphi(n) \leq 20$. 
 For each prime $p \mid n$, there is a unique prime ideal in $\mathcal{O}_k$ dividing $p$. 
 \end{lemma}
\begin{proof}
 Since we need the statement only for finitely many $n$, it can be checked with the help of a computer algebra system.
\end{proof}

\begin{lemma}
 The $c_n(x)$-lattices $(T,f)$ of Table \ref{tbl:possible_dets} are determined up to isomorphism 
 by their determinants and sign invariant. They admit a unique primitive embedding into $L_{k3}$, except $(n,\det T)=(32,2^6)$ which does not embed into $L_{K3}$.
\end{lemma}
\begin{proof}
 The $c_n$-lattices are twists of the principal $c_n$-lattice by elements in $O_k$.
 The determiant of $T$ gives the norm of the twist which by Lemma \label{lem:unique_prime_ideal} gives the prime factorization of the twist up to a unit. The unit is determined by the sign invariant (compare \ref{prop:p-lattice_isomorphism_class}) and thus the isomorphism class of $(T,f)$. 
 
 Now one can explicitly compute all $c_n(x)$-lattices in the table and check that $\varphi(n)+l(T^\vee \!/T) \leq 20$ for all pairs $(n,d)$ except $(25,5)$, $(27,3^3)$ and $(32,2^6)$. Then Theorem \ref{thm:main} provides uniqueness (and existence) of a primitive embedding outside those $3$ cases.
 
 We have to check in case $n=25$ that $T$ embeds uniquely into the K3-lattice. It has rank $20$ and determinant $5$.  Its orthogonal complement $\NS$ is an indefinite binary quadratic form of determinant $5$. It is unique in its genus and the canonical map $O(\NS)\rightarrow O(\NS^\vee \!/\NS)$ is surjective since both groups are generated by $-id$. By \cite[1.14.1]{nikulin:quadratic_forms} the embedding of $T$ into $L_{k3}$ is unique.
  
 For the case $(27,3^3)$, we need more theory not explained here, see e.g. \cite{miranda_morrison:embeddings}. 
 By \cite[VIII 7.6]{miranda_morrison:embeddings} $\NS$ is $3$-semiregular and $p$-regular for $p\neq 3$. Now \cite[VIII 7.5]{miranda_morrison:embeddings} provides surjectivity of 
 $O(\NS) \rightarrow O(\NS^\vee\!\!/\NS)$ and uniqueness in its genus (alternatively cf. \cite{miranda_morrison:embeddings_I,miranda_morrison:embeddings_II}). 
 Uniqueness of the embedding follows again with \cite[1.14.1]{nikulin:quadratic_forms}.
 
 It remains to check that $(32,2^6)$ does not embed into the K3-lattice. Suppose that it does.
 Then its orthogonal complement is isomorphic to $A_1(-1)\oplus 5A_1$ which is the only lattice of signature $(1,5)$ and discriminant group $\mathbb{F}_2^6$ (cf. \cite[Tbl. 15.5]{conway_sloane:sphere_packings}). Its discriminant form takes half integral values. Up to sign it is isomorphic to the discriminant form of its orthogonal complement $T\cong U(2) \oplus U(2) \oplus D_4 \oplus E_8$ which takes integral values, contradicting the existence of a primitive embedding. 
\end{proof}

\begin{proposition}\label{prop:transcendental_lattice_unique_embedding}
 Let $X$ be a complex K3 surface and $f \in\Aut(X)$ be an automorphism of finite order with $f^*\omega=\zeta_n\omega$ on $\CC \omega = H^0(X,\Omega_X^2)$. Suppose that $\rk T_X=\varphi(n)$. Then there is a unique primitive embedding
 \[T_X \hookrightarrow L_{K3}.\]
\end{proposition}
\begin{proof}
 If $\varphi(n)\leq 10$, then $\rk T_X + l(D_{\Trans(X)})\leq 2 \rk T_X=2\varphi(n)\leq 20$ and Theorem \ref{thm:unique embedding} provides uniqueness of the embedding.
 If $\varphi(n)>10$, then $\varphi(n)\geq 12$ and $\zeta_n$ is not an eigenvalue of $f|\NS$. 
 By Corollary \ref{coro:n_restrictions} there are only finitely many possibilities of $T_X$ up to isometry. Uniqueness of the embedding is checked individually in Lemma \ref{lem:unique_table}.
 \end{proof}

\begin{proof}[Proof of Theorem \ref{thm:main}]
The difference between Theorem \ref{thm:main} and Proposition \ref{prop:main} is that the automorphism $f_1$ is of finite order and we do not require the uniqueness of $T_i \hookrightarrow L_{k3}$.
Since $f_1$ is of finite order $T(X_1)$ has a unique embedding into the K3 lattice by Proposition \ref{prop:transcendental_lattice_unique_embedding}. Since $I_1=I_2$ and the sign invariants agree, we get $\Trans(X_1)\cong T(X_2)$ and thus uniqueness of the embedding for $\Trans(X_2)$ as well.
\end{proof}

\begin{lemma}\label{lem:54}
 The pair $(54,3^3)$ is not realized by a K3 surface. 
\end{lemma}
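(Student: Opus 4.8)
My plan is to assume that some $X$ with a non-symplectic automorphism $f$ of order $54$ on the $2$-forms realizes $(54,3^3)$ and to derive a contradiction from the $\ZZ[f^*]$-module structure of the discriminant group, rather than from genus theory alone. Write $K=\QQ(\zeta_{54})=\QQ(\zeta_{27})$, so $\O_K=\ZZ[\zeta_{54}]$ and $3$ is totally ramified with a unique prime $\mathfrak{P}$ above it. Since $54$ has two prime factors, $1-\zeta_{54}$ is a unit (Lemma \ref{lem:zeta}), the uniformizer is $1+\zeta_{54}$, and $\zeta_{54}\equiv -1 \pmod{\mathfrak{P}}$. As in Corollary \ref{coro:n_restrictions} the transcendental lattice $T$ is a $c_{54}(x)$-lattice, and by Lemma \ref{lem:discriminant} the value $\det T=3^3$ forces $\discgroup_T\cong \O_K/\mathfrak{P}^3$, a cyclic $\O_K$-module on which $f^*$ acts by multiplication by $\zeta_{54}$. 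Reducing modulo $3$, $f^*$ becomes a single Jordan block of size $3$ with eigenvalue $-1$ (its characteristic polynomial over $\FF_3$ is $(x+1)^3$, and $f^*-1$ is invertible since $\zeta_{54}-1\equiv 1\pmod{\mathfrak{P}}$). Transporting along the glue isomorphism $\discgroup_T\cong \discgroup_{\NS}$, the same holds for $G\defeq f^*|_{\NS}$ acting on $\discgroup_{\NS}\cong \O_K/\mathfrak{P}^3$; in particular this $\FF_3[G]$-module is \emph{indecomposable} and has no nonzero $G$-fixed vector and no nonzero quotient on which $G$ acts trivially.

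Next I would pin down $G$ on $\NS$ (rank $4$, signature $(1,3)$, $\discgroup_{\NS}\cong\FF_3^3$). Because $f$ has finite order it fixes an ample class, so $(x-1)\mid \mu_{\NS}$, where $\mu_{\NS}$ is the squarefree minimal polynomial of $G$. Since $\mu_{\NS}(G)=0$ also holds on $\discgroup_{\NS}$, the minimal polynomial $(x+1)^3$ of $G|_{\discgroup_{\NS}}$ must divide $\overline{\mu}_{\NS}$ modulo $3$. Now $\overline{c_2}=(x+1)$ and $\overline{c_6}=(x+1)^2$ in $\FF_3[x]$ (these are exactly the cyclotomic factors of small degree producing $3\mid \res(c_{54},c_m)$, cf. Theorem \ref{thm:cyclotomic_resultants} and Corollary \ref{coro:resultant}), so within the degree budget $\deg\mu_{\NS}\le 4$ the only way to reach multiplicity $3$ together with the factor $c_1$ is
\[\mu_{\NS}=c_1c_2c_6.\]
As this already has degree $4=\rk\NS$, it equals the characteristic polynomial, whence $\NS\otimes\QQ$ splits orthogonally into eigenspaces of dimensions $1,1,2$ for the eigenvalues $1,-1,\zeta_6^{\pm1}$, and $\NS$ is glued from primitive sublattices $N_1\oplus N_2\oplus N_6$ on which $G$ acts by $+1$, $-1$, and an order-$6$ isometry respectively.

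Finally I would compare the two descriptions of $\discgroup_{\NS}$ at the prime $3$. On $\discgroup_{N_2}$ the map $G$ is $-1$, and on $\discgroup_{N_6}$ it reduces to $-1$ modulo $\mathfrak{P}$ (again $\overline{c_6}=(x+1)^2$), so neither has a nonzero $G$-fixed element; on $\discgroup_{N_1}$, however, $G$ is $+1$. By Lemma \ref{lem:structure_glue_quotient} a nonzero $3$-part of $\discgroup_{N_1}$ would produce a nonzero quotient of $\discgroup_{\NS}$ on which $G$ acts trivially, which the size-$3$ block $\O_K/\mathfrak{P}^3$ does not admit; hence $N_1$ carries no $3$-torsion. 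The three generators of $\discgroup_{\NS}\cong\FF_3^3$ must therefore come from $N_2$ and $N_6$ alone, forcing $\discgroup_{N_2,3}\cong\FF_3$, $\discgroup_{N_6,3}\cong\FF_3^2$ and \emph{no} gluing between them at $3$ (any nontrivial $3$-glue would drop the length below $3$). Thus as an $\FF_3[G]$-module $\discgroup_{\NS}\cong \FF_3\oplus\FF_3^2$ splits nontrivially, contradicting the indecomposability of $\O_K/\mathfrak{P}^3$ from the first step. Hence $(54,3^3)$ is not realized.

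I expect the decisive and least routine point to be this module-theoretic comparison: the obstruction is invisible at the level of genera — the required lattice $T$ does exist, as it also underlies the realizable pair $(27,3^3)$ — and surfaces only once one tracks the $\ZZ[f^*]$-action. The contrast with $(27,3^3)$ is exactly the crux: there $\zeta_{27}\equiv +1\pmod{\mathfrak{P}}$, so the invariant part $N_1$ containing the ample class can link up with the coinvariant part into a single Jordan block and no contradiction arises, whereas the congruence $\zeta_{54}\equiv -1$ segregates the ample (fixed) direction from the discriminant block.
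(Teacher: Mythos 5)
Your first two steps are sound and essentially reproduce the paper's setup: the Jordan-block description of $\overline{f^*}$ on $\discgroup_{\NS}\cong\mathcal{O}_K/\mathfrak{P}^3$ correctly forces $\chi_{f|\NS}=c_1c_2c_6$ (the paper gets this from Corollary \ref{coro:n_restrictions}), and the absence of $3$-torsion in $\discgroup_{N_1}$ is right (the paper has $C1\cong(2)$). The argument breaks at the last step. You assert that $\discgroup_{N_2,3}\cong\FF_3$ and $\discgroup_{N_6,3}\cong\FF_3^2$ with no $3$-glue, ``since any nontrivial $3$-glue would drop the length below $3$.'' Neither claim holds. First, $N_6$ is a rank-$2$ $c_6(x)$-lattice, hence a twist $A_2(a)$ of the principal $c_6$-lattice with determinant $3a^2$; this is never $9$, so $\discgroup_{N_6,3}\cong\FF_3^2$ is impossible. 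What the resultant and length bookkeeping actually forces is $N_2\cong(-18)$ and $N_6\cong A_2(3)$, i.e.\ $\discgroup_{N_2,3}\cong\ZZ/9\ZZ$ and $\discgroup_{N_6}\cong\ZZ/9\ZZ\oplus\ZZ/3\ZZ$, glued over $\FF_3$ along $3\discgroup_{N_2}\cong 3\discgroup_{N_6}$ (Lemma \ref{lem:structure_glue_quotient} pins down exactly this glue). Second, such a glue need not drop the length: $\glue^\perp/\glue$ has order $27$ and can a priori still have length $3$. So the decomposability contradiction never materializes; indeed your own Jordan-block computation, pushed one step further, shows that $(\overline{f^*}+1)^2\neq 0$ on $\glue^\perp/\glue$ already \emph{requires} elements of order $9$ in $\discgroup_{N_2}$, the opposite of what you claimed.

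The missing ingredient is the paper's final ``quick calculation'': with $\ZZ/9\ZZ$ and $\ZZ/9\ZZ\oplus\ZZ/3\ZZ$ glued along their (up to $\pm1$ unique) order-$3$ subgroups, one computes $\glue^\perp/\glue$ explicitly and finds an element of order $9$, so the result is not $3$-elementary, contradicting $\discgroup_{\NS}\cong\FF_3^3$. (The paper does the analogous explicit gluing of $C1=(2)$ with $C2=(-18)$ in Example \ref{ex:glue} and then of $C1C2$ with $C6=A_2(3)$.) Your closing contrast between $\zeta_{54}\equiv-1$ and $\zeta_{27}\equiv 1\pmod{\mathfrak{P}}$ is a correct and illuminating heuristic for why $(27,3^3)$ survives while $(54,3^3)$ does not, but as written the proof does not close without that explicit discriminant-form computation.
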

\begin{proof}
Suppose that it is realized. A computation of the $c_{54}(x)$-lattice of determinant $3^3$ shows that $D_T \cong \FF_3^3$ and that the action of $f$ on $D_T$ is of order $6$. 
Hence, $\NS \cong U(3)\oplus A_2$ and the action on $D_{NS}\cong D_T$ is of order $6$ as well. Using Vinberg's algorithm (see Lemma \ref{lem:vinberg}) one explicitly computes the exact sequence
\[1 \rightarrow W(\NS)\rightarrow O^+(\NS)\rightarrow O(q_{\NS}) \rightarrow\{\pm1\}\rightarrow 1\]
and the image of $O^+(\NS)$ in $O(q_{\NS})$. However, an enumeration of its $24$ elements shows that it does not contain an element of order six. But $f|\NS$ is an element $O^+(\NS)$, giving the contradiction.
\end{proof}

\begin{table}[ht]
 \caption{Realized determinants in ascending order of $\varphi(n)\leq 10$}\label{tbl:real_dets1}
\centering
{
\renewcommand{\arraystretch}{1.2}
\rowcolors{1}{}{lightgray}
\begin{tabular}{llllr}
 \toprule
  $n$ &$\det T$&$X$&$f$ & \\
  \hline
  $3,6$	&$3 $		&$y^2=x^3-t^5(t-1)^5(t+1)^2 $ &$(\zeta_3x,\pm y,t) $&\cite[(7.9)]{kondo:trivial_action}\\
   
  $4  $	&$2^2 $ 	&$y^2=x^3+3t^4x+t^5(t^2-1) $ &$ (-x,\zeta_4y,-t)$ &\\
   
  $5,10$&$5 $		&$y^2=x^3+t^3x+t^7 $	&$(\zeta_5^3x,\pm\zeta_5^2y,\zeta_5^2t)$ &\cite[(7.6)]{kondo:trivial_action}\\

  $8  $	&$2^2$ 		&$y^2=x^3+tx^2+t^7$ 	&$(\zeta_8^6x,\zeta_8y,\zeta_8^6) $&\\
	&$2^4$		&$t^4=(x_0^2-x_1^2)(x_0^2+x_1^2+x_2^2)$	&$(\zeta_8 t;x_1:x_0:x_2)$	&\\
   
     $12  $&$1$		&$y^2=x^3+t^5(t^2-1)$	&$(-\zeta_3x,\zeta_4 y,-t)$&\cite[(3.4)]{kondo:trivial_action}\\
   	&$2^23^2$	&$y^2=x^3+t^5(t^2-1)^2$ &$(-\zeta_3x,\zeta_4y,t)$&\\
   	&$2^4$		&$y^2=x^3+t^5(t^2-1)^3$	&$(-\zeta_{3}x,\zeta_{4}y,-t)$&\\

  $7,14$&$7$		&$y^2=x^3+t^3x+t^8 $	&$(\zeta_7^3x,\pm \zeta_7y,\zeta_7^2t) $&\cite[(7.5)]{kondo:trivial_action}\\
   
  $9,18$&$3$		&$y^2=x^3+t^5(t^3-1) $	&$(\zeta_9^2x,\pm \zeta_9^3 y,\zeta_9^3t) $&\cite[(7.8)]{kondo:trivial_action}\\
	&$3^3$		&$y^2=x^3+t^5(t^3-1)^2$	&$(\zeta_9^2 x,\pm y,\zeta_9^3t) $&\\      
   
  $16  $&$2^2$		&$y^2=x^3+t^2x+t^7$	&$(\zeta_{16}^2x,\zeta_{16}^{11}y,\zeta_{16}^{10}t)$&\cite[4.2]{tabbaa_sarti_taki:order16}\\
	&$2^4$		&$y^2=x^3+t^3(t^4-1)x$	&$(\zeta_{16}^6x,\zeta_{16}^9y,\zeta_{16}^4t)$&\cite[4.1]{dillies:order16}\\
	&$2^6$		&$y^2=x^3+x+t^8$	&$(-x,iy,\zeta_{16}t)$&\cite[2.2]{tabbaa_sarti_taki:order16}\\
      
  $20  $&$2^4$		&$y^2=x^3+(t^5-1)x$	&$(-x,\zeta_4y,\zeta_5t)$&\\
	&$2^45^2$		&$y^2=x^3+4t^2(t^5+1)x$	&$(-x,\zeta_4y,\zeta_5t)$&\\
   
  $24$ 	&$2^2$		&$y^2=x^3+t^5(t^4+1) $	&$(\zeta_3\zeta_8^6x,\zeta_8y,\zeta_8^2t)$&\\
	&$2^6$		&$y^2=x^3+(t^8+1) $	&$(\zeta_3 x,y,\zeta_8 t) $&\\
	&$2^2 3^4$	&$y^2=x^3+t^3(t^4+1)^2$	&$(\zeta_3\zeta_8^{6}x,\zeta_8y,\zeta_8^6t) $&\\
	&$2^6 3^4$	&$y^2=x^3+x+t^{12}$	& $(-x,\zeta_{24}^6y,\zeta_{24}t)$&\\

  $15,30$&$5^2$		&$y^2=x^3+4t^5(t^5+1)$	&$(\zeta_3 x,\pm y,\zeta_5 t)$&\\
	 &$3^4$	&$y^2=x^3+t^5x+1$	&$(\zeta_{15}^{10} x,\pm y,\zeta_{15}t)$&\\

  $11,22$&$11$		&$y^2=x^3+t^5x+t^2$	&$(\zeta_{11}^5x,\pm \zeta_{11}^2y,\zeta_{11}^2t) $&\cite[(7.4)]{kondo:trivial_action}\\
   
  \bottomrule
  \end{tabular}
}
\end{table}

\begin{table}[ht]
 \caption{Purely non-symplectic automorphisms with $\varphi(n)\geq 12$}\label{tbl:real_dets2}
\centering
{
\renewcommand{\arraystretch}{1.2}
\rowcolors{1}{}{lightgray}
\begin{tabular}{llllr}
 \toprule
  $n$ &$\det T$&$X$&$f$ & \\
  \hline
 
  $13,26$&$13 $		&$y^2=x^3+t^5x+t$	&$(\zeta_{13}^5x,\pm\zeta_{13}y,\zeta_{13}^2t) $&\cite[(7.3)]{kondo:trivial_action}\\
  $26$&$13 $		&$y^2=x^3+t^7x+t^4$	&$(\zeta_{13}^{10}x,-\zeta_{13}^2y,\zeta_{13}t)$ &\\
  \textcolor{red}{$26$}&\textcolor{red}{$13$} 		&\textcolor{red}{$w^2=x_0^4 y_0^4 + x_1^4 y_1^3 y_2 + x_0 x_1^3 y_1^4$}	&\textcolor{red}{$((\zeta_{13}x_0:x_1),(\zeta_{13}^9y_0:y_2),-\zeta_{13}^7 w)$} &\\
  $21,42$    &$1$		&$y^2=x^3+t^5(t^7-1) $	&$(\zeta_{42}^2x,\zeta_{42}^3y,\zeta_{42}^{18}t) $&\cite[(3.0.2)]{kondo:trivial_action}\\
  $21,42$	 &$7^2$		&$y^2=x^3+4t^4(t^7-1)$	&$(\zeta_3 \zeta_7^6 x,\pm \zeta_7^2 y,\zeta_7 t)$&\\
  $21,42$	 &$7^2$		&$y^2=x^3+t^3(t^7+1)$   &$(\zeta_3 \zeta_7^3 x,\pm \zeta_7 y,\zeta_7^3t)$ &\\
  $21$       &$7^2$ 	&$x_0^3x_1 + x_1^3x_2 + x_0x_2^3 - x_0x_3^3$& $(\zeta_7x_0:\zeta_7x_1:x_2:\zeta_3 x_3)$ &\\
   
  $28  $&$1$		&$y^2=x^3+x+t^7 $	&$(-x,\zeta_4 y,-\zeta_7t) $&\cite[(3.3)]{kondo:trivial_action}\\
	&$2^6$		&$y^2=x^3+(t^7+1)x $	&$(-x,\zeta_4 y,\zeta_7t) $&\\
   	&$2^6$		&$y^2=x^3+(t^7+1)x $	&\multicolumn{2}{l}{$\left(x-(y/x)^2,\zeta_4 \left(y-(y/x)^3\right),\zeta_7t\right)$}\\
  
  $17,34$&$17 $		&$y^2=x^3+t^7x+t^2 $	&$(\zeta_{17}^7x,\pm \zeta_{17}y,\zeta_{17}^2t)$&\cite[(7.2)]{kondo:trivial_action}\\
  $34$   &$17$		&$x_0x_1^5+x_0^5x_2+x_1^2x_2^4=y^2$ &$(-y;x_0:\zeta_{17}x_1,\zeta_{17}^5 x_2)$ &\\
  
  $32  $&$2^2$		&$y^2=x^3+t^2x+t^{11}$	&$(\zeta_{32}^{18}x,\zeta_{32}^{11}y,\zeta_{32}^2t)$  &\cite[2 (1)]{oguiso:order32_example}\\
        &$2^4$      &$y^2=x_0(x_1^5+x_0^4 x_2+x_1 x_2^4)$ & $(\zeta_{32}y;\zeta^{2}_{32}x_0:x_1:\zeta^{24}_{32}x_2)$ & \\

  $36  $&$1$		&$y^2=x^3-t^5(t^6-1) $	&$(\zeta_{36}^2x,\zeta_{36}^3y,\zeta_{36}^{30} t) $&\cite[(3.2)]{kondo:trivial_action}\\
 	&$3^4$		&$y^2=x^3+x+t^9$	&$(-x,\zeta_4y,-\zeta_9t) $&\\
 	&$2^6 3^2$	&$x_0x_3^3+x_0^3x_1+x_1^4+x_2^4$ &$(x_0:\zeta_9^3x_1:\zeta_4\zeta_9^3x_2:\zeta_9x_3)$&\\

  $40  $&$ 2^4 $	&$z^2=x_0(x_0^4x_2+x_1^5-x_2^5)$	&$(x_0:\zeta_{20}x_1:\zeta_4x_2;\zeta_8z) $&\cite[4 (15)]{oguiso_machida}\\

  $48  $&$2^2$		&$y^2=x^3+t(t^8-1)$	&$(\zeta_{48}^2x,\zeta_{48}^3y,\zeta_{48}^6 t) $&\cite[4 (5)]{oguiso_machida}\\
    
  $19,38$&$19 $		&$y^2=x^3+t^7x+t$	&$(\zeta_{19}^7t,\pm\zeta_{19}y,\zeta_{19}^2t) $&\cite[(7.1)]{kondo:trivial_action}\\
  $38$& $19$		&$y^2=x_0^5x_1+x_0x_1^4x_2+x_2^6$	&$(x_0:\zeta_{19}x_1:\zeta_{19}^{16}x_2;-\zeta_{19}^{10}y)$& \\
  $27,54$&$3$		&$y^2=x^3+t(t^9-1)$	&$(\zeta_{27}^2x,\zeta_{27}^3y,\zeta_{27}^6t) $&\cite[(7.7)]{kondo:trivial_action}\\
  $27$	 &$3^3 $	&$x_0x_3^3+x_0^3x_1+x_2(x_1^3-x_2^3)$ &$(x_0:\zeta_{27}^3x_1:\zeta_{27}^{21}x_2:\zeta_{27}x_3)$&\\
   
  $25,50$&$5$		&$z^2=(x_0^6+x_0x_1^5+x_1x_2^5)$&$(z;x_0:\zeta_{25}^5x_1:\zeta_{25}^4x_2)$&\cite[(7.12)]{kondo:trivial_action}\\
      
  $33,66  $&$1 $	&$y^2=x^3+t(t^{11}-1) $	&$(\zeta_{66}^2x,\zeta_{66}^3y,\zeta_{66}^6t) $&\cite[(3.0.1)]{kondo:trivial_action}\\
  
  $44  $&$1$		&$y^2=x^3+x+t^{11} $	&$(-x,\zeta_4 y,\zeta_{11}, t) $&\cite[(3.1)]{kondo:trivial_action}\\
   
  \bottomrule
  \end{tabular}
}
\end{table}
The following is a generalization of Vorontsov's theorem \cite{vorontsov}.
\begin{theorem}\label{thm:n_d}
 Let $X$ be a K3 surface and $f$ a purely non-symplectic automorphism of order $n$ such that $\rk T=\varphi(n)$ and $\zeta_n$ is not an eigenvalue of $f|\NS\otimes\CC$.
 
 Set $d=\left|\det \NS\right|$, then $X$ is determined up to isomorphism by the pair $(n,d)$.
 Conversely, all possible pairs $(n,d)$ and equations for $X$ and (some) $f$ are given in Tables \ref{tbl:real_dets1}, \ref{tbl:real_dets2}. 
\end{theorem}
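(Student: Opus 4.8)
The statement has two halves---that $(n,d)$ determines $X$, and that the tables list exactly the realizable pairs---which I would treat separately, reusing the machinery behind Vorontsov's theorem but dropping the hypothesis $f|\NS=\id$. For the uniqueness half I first normalize: replacing each $f_i$ by a power coprime to $n$ keeps $f_i$ purely non-symplectic of order $n$, and, since the primitive $n$-th roots of unity form one Galois orbit and $\chi_{f_i|\NS}\in\ZZ[x]$, it preserves the hypothesis that $\zeta_n$ is not an eigenvalue of $f_i|\NS$; so one may assume $f_i^*\omega_i=\zeta_n\omega_i$. That eigenvalue hypothesis is exactly $\gcd(\mu_{f_i|\NS},c_n)=1$, whence $T(X_i)=\ker c_n(f_i|H^2)$ and $(T(X_i),f_i)$ is a $c_n(x)$-lattice to which Corollary \ref{coro:n_restrictions} applies: it is a twist of the principal $c_n(x)$-lattice, is $n$-elementary, and has signature $(2,\varphi(n)-2)$ and determinant listed in Table \ref{tbl:possible_dets}. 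Since $\NS(X_i)\oplus T(X_i)\hookrightarrow L_{K3}$ is a unimodular primitive extension, $D_{\NS(X_i)}\cong D_{T(X_i)}$ and so $d=|\det\NS(X_i)|=|\det T(X_i)|$. As $\varphi(n)$ is even and at most $20$, Lemma \ref{lem:unique_table} shows that the $c_n(x)$-lattice $T(X_i)$---and with it the $\mathcal{O}_K$-module $D_{T(X_i)}\cong\mathcal{O}_K/I_i$, hence the ideal $I_i=\ker(\ZZ[x]/c_n\to O(D_{T(X_i)}))$---is pinned down by $d$ alone, the sign invariant being forced by the signature together with the normalization $f_i^*\omega_i=\zeta_n\omega_i$ (as in Proposition \ref{prop:main}). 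Thus $d_1=d_2$ forces $I_1=I_2$, and Theorem \ref{thm:main} gives $X_1\cong X_2$.

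For the second half, Corollary \ref{coro:n_restrictions} already confines every realizable pair $(n,\det T)$ to the finitely many entries of Table \ref{tbl:possible_dets}, so the task is to separate the realized from the non-realized entries and to produce surfaces for the former. Realizability I would establish through the explicit models of Tables \ref{tbl:real_dets1} and \ref{tbl:real_dets2}: for each I would exhibit the (mostly elliptic) fibration or plane model, read off the action on $H^{2,0}$ to confirm the order $n$, and compute $\det\NS$ from the trivial lattice plus the Mordell--Weil contribution, many of these verifications being recorded in the cited literature. The four candidates of Table \ref{tbl:possible_dets} absent from the realized tables must then be excluded: $(32,2^6)$ does not embed into $L_{K3}$ by Lemma \ref{lem:unique_table}; $(54,3^3)$ is ruled out by Lemma \ref{lem:54}; for $n=60$ the bound $\deg\mu\le 22-\varphi(60)=6$ together with Theorem \ref{thm:cyclotomic_resultants} restricts $\det T$ to $1$ or a power of $5$, none of which is realized by an even $c_{60}(x)$-lattice of signature $(2,14)$ (for instance $\det T=1$ is impossible, since an even unimodular lattice has signature divisible by $8$ while $2-14\not\equiv 0\bmod 8$); and $(16,2^8)$ is eliminated by a direct evenness and embedding check on the twist producing determinant $2^8$ (cf.\ Remark \ref{rmk:2principal}, since $\QQ(\zeta_{16})/\QQ(\zeta_{16}+\zeta_{16}^{-1})$ ramifies over $2$).

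I expect the existence/classification direction to be the real work; the uniqueness half is essentially a bookkeeping run through the chain Corollary \ref{coro:n_restrictions} $\Rightarrow$ Lemma \ref{lem:unique_table} $\Rightarrow$ Theorem \ref{thm:main}. The hard part is twofold: certifying case by case that each tabulated equation defines a K3 surface whose automorphism has the asserted order and whose N\'eron--Severi lattice has the asserted determinant---this is genuine surface geometry (singular fibers, sections, fixed loci) rather than lattice formalism---and carrying out the delicate non-realizability arguments for the borderline determinants, where one must disentangle failure of evenness of a twist, failure of the signature and sign-invariant condition, and failure of primitive embedding into $L_{K3}$.
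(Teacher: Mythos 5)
Your overall architecture coincides with the paper's. The uniqueness half is exactly the chain Corollary~\ref{coro:n_restrictions} $\Rightarrow$ Lemma~\ref{lem:unique_table} $\Rightarrow$ Theorem~\ref{thm:main}, including the normalization of $f$ by a coprime power to fix $f^*\omega=\zeta_n\omega$ and the identification $d=|\det T|$ via the unimodular extension $\NS\oplus T\hookrightarrow L_{K3}$; your observation that powering preserves the eigenvalue hypothesis is correct and worth making explicit. The existence half is, as in the paper, a case-by-case verification of the tabulated models together with the exclusion of the leftover entries of Table~\ref{tbl:possible_dets}; your treatment of $(32,2^6)$, $(54,3^3)$ and $n=60$ matches the paper (the first two via Lemma~\ref{lem:unique_table} and Lemma~\ref{lem:54}, the last absorbed into the computation of Table~\ref{tbl:possible_dets}).

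The one step that would fail is your proposed exclusion of $(16,2^8)$ by ``a direct evenness and embedding check on the twist.'' That pair survives every such check: $2^8$ appears in Table~\ref{tbl:possible_dets} precisely because there \emph{is} an even twist of the principal $c_{16}(x)$-lattice of signature $(2,6)$ and determinant $2^8$, and since $(2)=(1-\zeta_{16})^8$ gives $D_T\cong\ZZ[\zeta_{16}]/(2)\cong\FF_2^8$, one has $l(D_T)+2=10\leq 14=\rk L_{K3}-\rk T$, so Theorem~\ref{thm:unique embedding} even guarantees a unique primitive embedding into $L_{K3}$. Hence no lattice-theoretic obstruction of the kind you invoke exists; the obstruction is geometric (an analysis of the order-$16$ action itself), and the paper imports it wholesale from \cite[4.1]{tabbaa_sarti_taki:order16}. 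You would either have to cite that result or argue at the level of the $g$-lattice $(\NS,f|\NS)$ that every candidate gluing is obstructed. Beyond this, note that the bulk of the paper's proof is the roughly two dozen explicit determinations of $\NS$ for the new entries of Tables~\ref{tbl:real_dets1} and~\ref{tbl:real_dets2} (singular fibers, torsion sections, fixed lattices of powers of $f$); you correctly flag this as the real work, but it is not reproducible from your sketch alone.
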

\begin{proof}
 Comparing Tables \ref{tbl:possible_dets} and \ref{tbl:real_dets1}, \ref{tbl:real_dets2} we have to exclude the pairs $(16,2^8)$ and $(54,3^3)$.
 For $(16,2^8)$, this is done in \cite[4.1]{tabbaa_sarti_taki:order16}.
 The pair $(54,3^3)$ is ruled out in Lemma \ref{lem:54}.\\ 
 
  By Lemma \ref{lem:unique_table} the transcendental lattice is uniquely determined by $(n,d)$ and embeds uniquely into $L_{K3}$. By Theorem \ref{thm:main} $X$ is determined up to isomorphism by $(\zeta_n,I)$, where $I$ is the kernel of 
 \[\mathbb{Z}[x]/c_n(x) \rightarrow O(T^\vee \!/T),\qquad x\mapsto f|T^\vee \! /T\]
 and $f^*\omega_X=\zeta_n \omega_X$ for $\CC \omega_X = H^0(X,\Omega_X^2)$.
 By Lemma \ref{lem:unique_table}, $I$ is determined uniquely by $(n,d)$.
 Replacing $f$ with $f^k$, $(n,k)=1$, does not affect $(n,d)$, hence $I$. However, in this way we can fix a primitive $n$-th root of unity $\zeta_n$.\\
 
 It remains to compute the Néron-Severi group of the examples in Tables \ref{tbl:real_dets1}, \ref{tbl:real_dets2} not found in the literature. In most cases this can be done by collecting singular fibers of an elliptic fibration (see \cite[p. 365]{silverman:advanced_topics}) or determining the \emph{fixed lattice}\index{Fixed lattice} $S(f^k)=H^2(X,\ZZ)^{f^k}$ of a suitable power of the automorphism $f$ via its fixed points. The corresponding tables of fixed lattices are collected in \cite{artebani_sarti_taki:non-symplectic,artebani_sarti:order3}, \cite[Figure 2]{nikulin:lobachevsky}.\\

\boldmath$(4,2^2)$ \unboldmath We see two fibers of type $II^*$ over $t=0,\infty$ and two fibers of type $I_2$ over $t=\pm 1$. Then $\NS \cong U\oplus 2 E_8 \oplus 2 A_1$ as expected. The two form is given in local coordinates by $dx \wedge dt/2y$, and $f^*(dx \wedge dt/2y)= -dx \wedge -dt/(2y\zeta_4)=\zeta_4^3dx \wedge dt/2y$. Hence the action is non-symplectic. The fixed lattice is $U\oplus 2 E_8$ while the $I_2$ fibers are exchanged. Giving that $f|\NS$ has order two.\\

\boldmath$(8,2^2)$\unboldmath This belongs to the second family of \cite[Thm 1]{schuett:2power}
 with $m=4$ and $\Trans \cong U \oplus U(2)$ for $\lambda = 0$.\\

\boldmath$(8,2^4)$ \unboldmath  The fourfold cover of $\mathbb{P}^2$ is a special member of a family in \cite[Ex. 5.3]{artebani_sarti:order4}. It has five $A_3$ singularities. The fixed locus of the non-symplectic involution $f^4$ consists of $8$ rational curves, where each $A_3$ configuration contains 1 fixed curve. Hence, its fixed lattice is of rank $18$ and determinant $-2^4$. It equals $\NS$.\\
 
\boldmath $(12,2^4)$ \unboldmath We get fibers of type $1 \times II, 1 \times II^*, 2 \times I_0^*$ which results in the lattice $\NS=U \oplus E_8 \oplus 2 D_4$.\\
 
\boldmath $(12,2^2 3^2)$ \unboldmath This time the zero section and the fibers span the lattice $\NS=U\oplus E_8 \oplus 2 A_2 \oplus D_4$.\\
 
\boldmath$(15,5^2)$ \unboldmath This elliptic K3 surface arises as a degree $5$ base change from the rational surface $Y:y^2=x^3+4t(t+1)$.
We see the section $(x,y)=(\zeta_3^k,1+2t)$ of $Y$ and then $(x,y)=(\zeta_3^k,1+2t^5)$ generating the Mordell-Weil group of $X_{(15,5^2)}$. Alternatively one can compute that $\NS$ is the fixed lattice of $f^3$.\\

\boldmath$(15,3^4)$ \unboldmath The 5-th power $f^5$ of $f$ is a non-symplectic automorphism of order $3$ acting trivially on $\NS$.
 It has $2$ fixed curves of genus $0$ lying in the $E_7$ fiber and $6$ isolated fixed points over $t=0$ and $t=\infty$.
 The classification of the fixed lattices of non-symplectic automorphisms of order 3 provides the fixed lattice of $f^5$ which equals $\NS$. In order to get explicit generators of the Mordell-Weil group we can base change with $t\mapsto t^3$ from the rational surface $y^2=2x^3+tx+t^4$ with sections $(x,y)=(t,t^2+t),(0,t^2)$.\\
 
 \textcolor{red}{
 \boldmath$(26,13)$ \unboldmath
 The third case is the minimal resolution of a double cover of $\PP^1 \times \PP^1$ branched over the curve $x_0^4 y_0^4 + x_1^4 y_1^3 y_2 + x_0 x_1^3 y_1^4$ of bi-degree $(4,4)$.
 The double cover has an $E_6$ singular point above $((1:0),(0:1))$. The branch curve is fixed by the covering involution. Its strict transform is a smooth curve of genus $6$. The involution fixes a single smooth rational curve coming from the resolution of the $E_6$ singularity.
 Therefore the fixed lattice of the involution has rank $6$ and determinant $2^4$.}

 \boldmath$(34,17)$ \unboldmath 
 In the first case the fixed locus of $g_1^{17}$ consists of a curve ($y=0$) of genus $8$ and a rational curve - the zero section. This leads to the fixed lattice $U\oplus (-2) \oplus (-2)$.

 Since the fixed locus of $g_2^{17}$ is a curve of genus $8$, $S(g^{17})\cong(2)\oplus (-2) \oplus (-2)$. Note that there is an $A_4$ singularity at zero.
 Since the fixed lattices of the two automorphisms are different, the actions are distinct as well.\\
 
 \boldmath$(20,2^4)$ \unboldmath The elliptic fibration has $5$ fibers of type $III$ and a single fiber of type $III^*$. This results in the lattice $U\oplus E_7 \oplus 5 A_1$ spanned by fiber components and the zero section. It has determinant $2^6$. 
 Since there is also a $2$-torsion section, $\det\NS=2^4$.\\

 \boldmath$(20,2^4 5^2)$ \unboldmath In this case $X_{(20,2^4 5^2)}$ has a single fiber of type $I_0^*$ and $6$ fibers of type $III$. This results in the lattice $U \oplus D_4 \oplus 6 A_1$ of rank $12$ and determinant $2^8$. Again there is $2$-torsion. We reach a lattice of determinant $2^6$. We get $X$ by a degree $5$ base change from $y^2=x^3+4t^2(t+1)x$
 with section $(x,y)=(t^2,t^3+2t^2)$. We get the sections $(x,y)=(t^6,t^9+2t^4)$ and $(-x(t),iy(t))$ generating the Mordell-Weil lattice $A_1^\vee(5)^{\oplus 2}$ of $X$. \\
 
 \boldmath$(21,7^2)$ \unboldmath We can base change this elliptic fibration from $y^2=x^3+4t^4(t-1)$ to get the sections
 $(x,y)=(\zeta_3^k t^6,2t^2+t^9)$ which generate the Mordell-Weil lattice of $X$. Using the height pairing one can then compute $\NS$. Alternatively note that $f^3$ is an order $7$ non-symplectic automorphism acting trivially on $\NS$ and not fixing a curve of genus $0$ point-wise. There is only a single possible fixed lattice of rank $10$, namely $U(7)\oplus E_8$.
 For the other possible actions see Lemma \ref{lem:21,7^2}.\\
 
\boldmath $(24,2^2)$ \unboldmath The fibration has $4$ type $II$ fibers, one type $I_0^*$ and an $II^*$ fiber. We get $\NS=U\oplus D_4 \oplus E_8$.\\
 
\boldmath $(24,2^6)$ \unboldmath In this case the fixed locus of $f^{12}$ consists of $4$ rational curves and a curve of genus $1$. This leads to a fixed lattice of rank $14$ and determinant $2^6$ as expected.\\
 
\boldmath$(24,2^2 3^4)$ \unboldmath The trivial lattice is $U\oplus D_4 \oplus 4 A_2$. It equals $\NS$ for absence of torsion sections. \\

 \boldmath$(24,2^63^4)$ \unboldmath Since $X$ has a purely non-symplectic automorphism of order $24$, the rank of $\NS$ is either $6$ or $14$.  Since Fix($f^{12}$) consists of $2$ smooth curves of genus $0$, its fixed lattice $S(f^{12})$ is $2$-elementary of rank $10$ and determinant $-2^8$. 
 Hence, we see that $\rk \NS=14$. Since the orthogonal complement of $S=S(f^{12})$ in $\NS$ is of rank $4$, the glue $G_S$ is an at most $4$ dimensional subspace. Then $2^4\leq |D_S/G_S|\leq |D_{\NS}|$ by Lemma \ref{lem:glue estimate}. Hence $2^4 \mid \det \NS$. Note that $S(f^{12})=\ker {f^{12}-1}$ and then $\ker c_{24}c_8(f)=S(f^{12})^\perp$ is of rank $12$. This shows that the characteristic polynomial of $f|\NS$ is divided by $c_8$ but not by $c_{24}$.
We are in the situation of Theorem \ref{thm:n_d}. As $2^4 \mid \det \NS$, it is either $-2^6$ or $-2^63^4$. We show that $3\mid \det \NS$. 
A direct computation reveals that Fix$(f^8)$ consists of a smooth curve of genus $1$ and $3$ isolated fixed points. This leads to a fixed lattice 
\[S(f^8)=\ker(c_8c_4c_2c_1)(f)\cong U(3)\oplus 3 A_2\]
of rank $8$ and determinant $-3^5$.  Now we view $S(f^8)$ as a primitive extension of $\ker c_8(f) \oplus \ker c_4c_2c_1(f)$. The rank of both summands is $4$, while the length of the discriminant group of $S(f^8)$ is $5$. 
Then each summand must contribute to the discriminant group. We see that $3 \mid \det \ker c_8(f)$. However, 
\[3 \nmid\res(c_8,c_{12}c_{6}c_{4}c_3c_2c_1)=2^4.\] 
In particular the $3$ part of $D_{\ker c_8(f)}$ cannot be glued inside $\NS$. Then $3 \mid \det \NS$.  \\
 
\boldmath $(27,3^3)$ \unboldmath The action of $f^9$ has an isolated fixed point and a fixed curve of genus $3$. We see that the fixed lattice of $f^9$ is $U(3)\oplus A_2=\NS$. It is spanned by the $4$ lines at $x_3=0$. Note that $f^3$ acts trivially on $\NS$ while $f$ does not.\\
 
\boldmath $(28,2^6)$ \unboldmath This fibration has $8$ fibers of type $III$ and a $2$-torsion section. Together they generate the Néron-Severi group.\\

\boldmath $(32,2^2)$ \unboldmath
The elliptic fibration has a singular fiber of type $I_0^*$, of type $II$ and $16$ of type $I_1$. Thus $\NS\cong U\oplus D_4$. Here $f$ has $6$ isolated fixed points.\\

\boldmath $(32,2^4)$ \unboldmath
 The fixed locus of $S(f^{16})$ is the strict transform of $y=0$ which is the disjoint union of a rational curve and a curve of genus $5$. Thus $\det NS=2^4$.
Note that $f$ has $4$ isolated fixed points.\\

\boldmath $(36,3^4)$ \unboldmath The fixed curves of $f^{12}$ are a smooth of genus $0$ over $t=0$ and the central rational curve in the $D_4$ fiber. This leads to the fixed lattice $U\oplus 4A_2=\NS$.\\
 
 \boldmath$(36,2^6 3^2)$ \unboldmath If we can show that $2\mid \det\NS$, then the only possibility is $\det\NS=-2^6 3^2$.
 The action of $f^{18}$ fixes a smooth curve of genus $3$ (it is a plane curve of degree $4$ in the hypersurface defined by $x_2=0$) and nothing else. Hence, its fixed lattice $S$ is $2$-elementary of rank $8$ and determinant $2^8$. Denote by $C=S^\perp \subseteq\NS$ the orthogonal complement of $S$ inside $\NS$. It has rank $2$.
 Assume that $2 \nmid \det\NS$. Then $ (D_S)_2 \cong (D_C)_2$ which is impossible, since $(D_S)_2$ has dimension $8$, while $(D_C)_2$ is generated by at most $2$ elements.
\end{proof}

\begin{remark}
The pair $(21,7^2)$ contradicts \cite[2.1]{jang:order21}. There it is claimed that a purely non-symplectic automorphism of order 21 acts trivially on $\NS$. As a consequence it is claimed that there is only a single K3 surface of order $21$. However there are two. The pair $(28,2^6)$ and its uniqueness are probably known to J. Jang independently. \\
The pair $(32,2^4)$ contradicts \cite[Thm 1.2]{taki:order32}.
There the uniqueness of $(X,\langle g \rangle)$ where $g$ is a purely non-symplectic automorphism of order $32$ is claimed.

 In \cite[1.8, 4.8]{taki:3power} non symplectic automorphisms of $3$-power order acting trivially on $\NS$ are classified. The author is missing a case. It is claimed that if $\NS(X)=U(3)\oplus A_2$ then there is no purely non-symplectic automorphism of order $9$ acting trivially on $\NS$. The pair $(27,3^3)$ contradicts this result - there the automorphism 
 acts with order $3$ on $\NS$. It is a special member of the family
 \[x_0x_3^3+x_0^3x_1+x_2(x_1-x_2)(x_1-ax_2)(x_1-bx_2)\]
 with automorphism given by $(x_0:x_1:x_2:x_3)\mapsto (x_0:\zeta_9^3x_1:\zeta_9^3x_2:\zeta_9x_3)$ and generically trivial action on $\NS$ as a fixed point argument shows. It was found by first computing the action of $f$ on cohomology through gluing, thus proving its existence. Then one specializes a family with automorphism of order $3$ given in \cite[4.9]{artebani_sarti:order3}.
\end{remark}

\section{Classification of non-symplectic automorphisms of high order}
Let $X_i$, $i=1,2$, be K3 surfaces and $G_i \subset \Aut(X_i)$ subsets of their automorphism groups. Recall that the pairs $(X_1,G_1)$, $(X_2,G_2)$ are said to be equivalent if there is
an isomorphism $F: X_1 \rightarrow X_2$ with $G_2 = FG_1F^{-1} \defeq \{F \circ g F^{-1}| g \in G_1\}$.
\begin{theorem}\label{thm:pair_finite_aut}
 Let $X_{(n,d)}$ be as in Tables \ref{tbl:real_dets1},\ref{tbl:real_dets2}.
\begin{enumerate}
	\item For $(n,d)=(66,1),(48,2^2),(44,1),(50,5),(42,1),(28,1),(36,1),$
	
	 $(32,2^2),(32,2^4),(40,2^4),(54,3),(27,3^3),(24,2^2),(16,2^2)$, we have
	\[Aut(X_{(n,d)})= \langle g_{(n,d)} \rangle \cong \ZZ/n\ZZ.\] 
   \item For $(n,d)=(28,2^6),(12,1),(16,2^4),(20,2^4)$ we have
   \[\Aut(X)\cong \ZZ/2\ZZ \times \ZZ/n\ZZ.\]
\end{enumerate}
\end{theorem}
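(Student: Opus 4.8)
The plan is to split $\Aut(X)$ into a cyclic ``non-symplectic part'' and a ``symplectic part'' via the action on the holomorphic $2$-form. Fix $0\neq\omega\in H^0(X,\Omega_X^2)$ and consider the character
\[\alpha\colon \Aut(X)\to\CC^\times,\qquad f\mapsto f^*\omega/\omega.\]
Its image is a finite cyclic group $\mu_m$ of roots of unity, and its kernel is precisely the group $\Aut_s(X)$ of symplectic automorphisms. By Nikulin a finite-order symplectic automorphism acts trivially on $T=T(X)$; moreover each $s\in\Aut_s(X)$ restricts to a finite-order isometry of the negative-definite lattice $(\NS^s)^\perp\subseteq\NS(X)$ and to the identity on its orthogonal complement and on $T$, so $\Aut_s(X)$ is finite. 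Since $g=g_{(n,d)}$ satisfies $\alpha(g)=\zeta_n$ and $g^k\in\Aut_s(X)$ iff $n\mid k$, the cyclic group $\langle g\rangle$ maps isomorphically onto $\mu_m$ once we show $m=n$, giving a splitting $\Aut(X)\cong\Aut_s(X)\rtimes\langle g\rangle$. Everything thus reduces to (i) proving $m=n$ and (ii) computing $\Aut_s(X)$.

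For (i), suppose $\Aut(X)$ contained an automorphism $h$ acting on $\omega$ as a primitive $m$-th root of unity with $n\mid m$ and $m>n$. Then $H^{2,0}\subseteq\ker c_m(h|T)\otimes\CC$, and minimality of $T$ forces $T=\ker c_m(h|T)$, so $(T,h|T)$ is a $c_m(x)$-lattice and $\varphi(m)=\rk T=\varphi(n)$. A multiple $m>n$ of $n$ with $\varphi(m)=\varphi(n)$ exists only when $n$ is odd and $m=2n$, which among the listed pairs occurs only for $(n,d)=(27,3^3)$. In that case $(T,h)$ would be a $c_{54}(x)$-lattice of determinant $\det\NS(X)=3^3$, i.e.\ it would realize the pair $(54,3^3)$, contradicting Lemma~\ref{lem:54}. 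Hence $m=n$ in every case.

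For (ii), I would invoke Nikulin's classification of symplectic automorphisms: a nontrivial finite symplectic group $G\subseteq\Aut_s(X)$ has a negative-definite co-invariant lattice $\Omega_G$ that embeds primitively into $\NS(X)$, with $\rk\Omega_G=8$ for $|G|=2$ (namely $E_8(-2)$), $\rk\Omega_G=12$ for $|G|=3$, $\rk\Omega_G=14$ for $|G|=4$, and so on. As $\NS(X)$ has signature $(1,\rho-1)$ with $\rho=22-\varphi(n)$, such an $\Omega_G$ forces $\rho-1\ge\rk\Omega_G$. This rank bound alone gives $\Aut_s(X)=1$ in all part-(1) cases with $\rho\le 8$, namely $(66,1),(44,1),(50,5),(54,3),(27,3^3),(32,2^2),(32,2^4),(40,2^4)$. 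For the unimodular cases $(42,1),(28,1),(36,1)$, where $\NS\cong U\oplus E_8$, a symplectic involution would require $E_8(-2)$, whose discriminant group $(\ZZ/2)^8$ has length $8$, to embed primitively with a rank-$\le 2$ orthogonal complement; the discriminant matching forced by Lemma~\ref{lem:glue estimate} makes this impossible, and order $\ge 3$ is excluded by rank, so again $\Aut_s(X)=1$.

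The genuinely delicate cases are the high-$\rho$ pairs: $(24,2^2),(16,2^2)$ with $\rho=14$ (where I must exclude symplectic automorphisms of orders $2$ \emph{and} $3$) in part~(1), and the part-(2) pairs, where $\Omega_{\ZZ/2}=E_8(-2)$ does embed. For part~(2) I would exhibit the symplectic (Nikulin) involution $s$ explicitly: $(28,2^6),(16,2^4),(20,2^4)$ each carry a $2$-torsion section of the displayed elliptic fibration (e.g.\ $(x,y)=(0,0)$ for $(16,2^4)$), and translation by it is a symplectic involution with $\Omega_{\langle s\rangle}=E_8(-2)$; for $(28,2^6)$ the second automorphism in Table~\ref{tbl:real_dets2} is $g\circ s$, and for $(12,1)$ with $\NS\cong U\oplus 2E_8$ one takes the involution exchanging the two copies of $E_8$. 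One then checks, using the explicit $\NS(X)$ from the proof of the preceding classification theorem, that no co-invariant lattice of order $\ge 3$ (and, for $(12,1)$ with $\rho=18$, none of orders $3,4,5,6$) embeds primitively, so that $\Aut_s(X)=\langle s\rangle\cong\ZZ/2\ZZ$; since $\operatorname{Aut}(\ZZ/2\ZZ)=1$, the split sequence collapses to $\Aut(X)\cong\ZZ/2\ZZ\times\ZZ/n\ZZ$, while $\Aut_s(X)=1$ yields $\Aut(X)\cong\ZZ/n\ZZ$ in part~(1). The main obstacle I anticipate is exactly this last embedding analysis for the borderline large-$\rho$ lattices: ruling out every symplectic co-invariant lattice except the one accounted for requires a genuine primitive-embedding computation rather than a rank count.
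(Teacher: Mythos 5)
Your route is genuinely different from the paper's. The paper does not decompose $\Aut(X)$ into symplectic and non-symplectic parts at all: it observes that every automorphism acts on $\NS$ through $\Gamma(\NS)=O^+(\NS)/W(\NS)$ (it preserves the ample chamber), that its induced action on $q_{\NS}\cong -q_T$ is forced by its action on $T$, and then reads everything off from Lemma \ref{lem:vinberg}, where Vinberg's algorithm shows that $\phi:\Gamma(\NS)\to O(q_{\NS})$ is an isomorphism for the part-(1) lattices and has kernel of order two exactly for $U(2)\oplus 2D_4$ and $U(2)\oplus D_4\oplus E_8$, i.e.\ the part-(2) cases. That single computation delivers finiteness, the uniqueness of the lift of each $f|_T$, and the extra symplectic involution all at once. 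Your splitting via the character $\alpha$ and the identification $m=n$ (including the $(27,3^3)$ versus $(54,3^3)$ point settled by Lemma \ref{lem:54}) is correct and would be a legitimate alternative skeleton.

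However, there are two genuine gaps. First, your finiteness argument for $\Aut_s(X)$ does not work as stated: the claim that $(\NS^s)^\perp$ is negative definite presupposes that $s^*$ fixes a class in the positive cone, which holds for finite-order isometries but is exactly what fails for a hypothetical infinite-order (e.g.\ parabolic or hyperbolic) symplectic automorphism. To rule those out you need to know that $W(\NS)$ has finite index in $O(\NS)$ for these lattices, which is precisely the content of Lemma \ref{lem:vinberg} (or of Nikulin's 2-reflectivity classification, which the paper only cites in a remark). Second, and more seriously, the decisive computations are deferred rather than done: for $(24,2^2),(16,2^2),(16,2^4),(20,2^4)$ with $\rho=14$ the rank bound excludes neither order $2$ nor order $3$ (for $\NS=U\oplus D_4\oplus E_8$ of determinant $-2^2$, the constraint $|D_{E_8(-2)}/G|\cdot|D_C/G_C|=4$ from Lemma \ref{lem:glue estimate} is \emph{consistent} with a length-$6$ glue into a rank-$6$ complement, so a real primitive-embedding argument is needed), and for $(12,1)$ with $\NS=U\oplus 2E_8$ and $\rho=18$ you must exclude symplectic automorphisms of orders $3$ and $5$ and show the symplectic $2$-group is exactly $\ZZ/2\ZZ$ rather than a larger $2$-group. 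You explicitly acknowledge that these require "a genuine primitive-embedding computation" that you do not carry out; since those are exactly the cases where the theorem has content, the proposal as written is incomplete.
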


\begin{remark}
In \cite[Thm. 1]{oguiso_machida} (1) is proven for $n=66,44,50$ and $40$ by a different method. All other entries of Tables \ref{tbl:real_dets1} and \ref{tbl:real_dets2} have infinite automorphism group. To see this, one checks if the N\'eron Severi lattice is $2$-reflective, which by definition means that the Weyl group is of finite index in its orthogonal group.
One can look up the classification of $2$-reflective lattices in \cite[Prop 0.2.1, Thm 0.2.2]{nikulin:2-reflective}.
\end{remark}
Before proving the Theorems \ref{thm:pair_classification} and \ref{thm:pair_finite_aut} we refine our terminology.\\
A \emph{lattice with isometry} is a pair $(A,a)$, where $A$ is a lattice and $a\in O(A)$ an isometry.
A morphism $\phi: (A,a) \rightarrow (B,b)$ of lattices with isometry is an isometry $\phi: A \rightarrow B$ such that $\phi \circ a =b \circ \phi$.
A primitive extension of lattices with isometry is a morphism
$\phi:(A,a)\oplus (B,b)\hookrightarrow (C,c)$
such that the embedding of $A$ is primitive and $\phi(B) = \phi(A)^\perp$.
Note that since $\phi$ is a morphism, $a \oplus b = c|_{A\oplus B}$.
\begin{definition}
We call two primitive extensions of lattices with isometry
\[(A,a_i)\oplus (B_i,b_i)\hookrightarrow (C_i,c_i), \quad i =1,2\]
isomorphic if there is a commutative diagram
\[
  \begin{tikzcd}[column sep=0.1cm]
     (A_1,a_1) \arrow{d} & \oplus & (B_1,b_1) \arrow{d}\arrow{rrrrrrr}&	& & & & & &(C_1,c_1)\arrow{d}\\
     (A_2,a_2) & \oplus & (B_2,b_2) \arrow{rrrrrrr}&	& & & & & &(C_2,c_2)
  \end{tikzcd}
 \]
 where the vertical arrows are isomorphisms.
\end{definition}

\begin{proposition}
There is a one to one correspondence between isomorphism classes of primitive extensions of
$(A,a) \oplus (B,b)$
and the double coset
	\[
	\Aut(B,b) \setminus \left\{ 
	\begin{matrix}
	\mbox{Glue maps } \phi: G_A \xrightarrow{\sim} G_B\\
	\mbox{satisfying } \phi \circ a = b \circ \phi
	\end{matrix}
	\right\}/\Aut(A,a).
	\]
\end{proposition}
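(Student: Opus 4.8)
The plan is to promote the bijection between primitive extensions and glue maps — recorded in the subsection on gluing isometries — to an equivariant statement, and then to quotient out the ambiguity coming from $\Aut(A,a)$ and $\Aut(B,b)$. Since the proposition speaks of glue maps on the \emph{full} discriminant groups, I work in the relevant unimodular situation, so that $\glue_A=\discgroup_A$ and $\glue_B=\discgroup_B$ and $A,B$ are recovered as each other's orthogonal complements in $C$. A primitive extension is then the datum of an overlattice $(C,c)$ of $(A,a)\oplus(B,b)$, with $c|_A=a$ and $c|_B=b$ by definition.

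First I would define the forward map. Given such an extension, the glue $\glue=C/(A\oplus B)$ is the graph of a unique isomorphism $\phi\colon\discgroup_A\xrightarrow{\sim}\discgroup_B$, exactly as in the non-equivariant correspondence already established. The additional input is equivariance: because $c=a\oplus_\phi b$ extends to $C$, the gluing criterion recalled earlier forces $\overline{a}$ and $\overline{b}$ to preserve $\glue$, which is the same as the relation $\phi\circ\overline{a}=\overline{b}\circ\phi$. Hence $\phi$ lies in the indicated set of equivariant glue maps.

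Next I would verify well-definedness and construct the inverse. An isomorphism of two such extensions over the fixed pair $(A,a),(B,b)$ is a triple $(g,h,\gamma)$ with $g\in\Aut(A,a)$, $h\in\Aut(B,b)$ and an equivariant isometry $\gamma\colon C_1\to C_2$ restricting to $g$ on $A$ and $h$ on $B$. As $A\oplus B$ has finite index in each $C_i$, the map $\gamma$ is forced to be the rational extension of $g\oplus h$, and it carries $C_1$ into $C_2$ if and only if it sends the glue graph of $\phi_1$ onto that of $\phi_2$. Reading this off on $\discgroup_A\oplus\discgroup_B$ gives $\phi_2=\overline{h}\circ\phi_1\circ\overline{g}^{-1}$, i.e. $\phi_1$ and $\phi_2$ represent the same class in the double coset. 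Conversely, any equivariant $\phi$ yields an overlattice $C_\phi\defeq A\oplus_\phi B$ with isometry $c_\phi\defeq a\oplus_\phi b$ (a genuine $g$-lattice precisely because $\phi$ is equivariant), and replacing $\phi$ by $\overline{h}\circ\phi\circ\overline{g}^{-1}$ produces an isomorphic extension through $\gamma=g\oplus h$. The two assignments are mutually inverse by construction.

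The only genuine work is the discriminant-group bookkeeping in the previous paragraph: translating ``$\gamma$ preserves the integral overlattice'' into the relation $\phi_2=\overline{h}\circ\phi_1\circ\overline{g}^{-1}$, and noting that the double-coset action is really through the induced representations $\Aut(A,a)\to O(\discgroup_A)$ and $\Aut(B,b)\to O(\discgroup_B)$, whose images lie in the centralizers of $\overline{a}$ and $\overline{b}$. None of these steps presents a real obstacle, which is why the verification is safely left to the reader.
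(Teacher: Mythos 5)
The paper gives no proof of this proposition --- it is explicitly left to the reader --- so there is nothing to compare against; your argument is correct and is exactly the standard one the author intends. Two small remarks. First, your relation $\phi_2=\overline{h}\circ\phi_1\circ\overline{g}^{-1}$ is the right one (and type-checks), whereas the paper's displayed action $g.\phi.h=g\circ\phi\circ h$ does not, since $\phi\circ h$ composes a map out of $\discgroup_B$ with a map into $\discgroup_B$; your computation that $(\overline{g}\oplus\overline{h})$ carries the graph of $\phi_1$ to the graph of $\overline{h}\circ\phi_1\circ\overline{g}^{-1}$ silently corrects this, and the resulting orbits are the same double cosets. Second, your restriction to the unimodular case is consistent with the statement's $\phi\colon\discgroup_A\xrightarrow{\sim}\discgroup_B$, but the paper also applies the proposition to extensions inside non-unimodular lattices (e.g.\ gluings inside $\NS$); there one replaces full discriminant groups by glue maps defined on subgroups $\glue_A\subseteq\discgroup_A$, and your argument goes through verbatim since $\gamma$ is still the rational extension of $g\oplus h$ and still acts on graphs in $\discgroup_A\oplus\discgroup_B$ in the same way.
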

\begin{proof}
 Let 
 \[
  \begin{tikzcd}[column sep=0.1cm]
     (A,a) \arrow{d}{\psi_A} & \oplus & (B,b) \arrow{d}{\psi_B}\arrow{rrrrrrr}&	& & & & & &(C_1,c_1)\arrow{d}{\psi_C}\\
     (A,a) & \oplus & (B,b) \arrow{rrrrrrr}&	& & & & & &(C_2,c_2)
  \end{tikzcd}
 \]
 be an isomorphism of primitive extensions. We can view $C_1$ and $C_2$ as overlattices of $A\oplus B$.
 On the level of glue groups this gives a commutative diagram
  \[
  \begin{tikzcd}[column sep=1cm]
     (G_A,a) \arrow{d}{ \psi_A}  \arrow{r}{\phi_1} & (G_B, b) \arrow{d}{ \psi_B}\\
     (G'_A,a)  \arrow{r}{\phi_2} & (G'_B, b) 
  \end{tikzcd}
 \]
 Then $\phi_2 = \psi_B^{-1} \circ \phi_1 \circ \psi_A$.
 Conversely, if $\phi_2 = g \phi_2 h$ with $g \in \Aut(B,b)$ and $h \in \Aut(A,a)$ is another glue map, then one can check that $g^{-1} \oplus h$ extends to an equivariant isomorphism of the overlattices defined by $\phi_1$ and $\phi_2$.
\end{proof}

\begin{definition}\label{def:few_extensions}
We say that a lattice with isometry $(A,a)$ has \emph{simple glue}\index{Few extensions} if 
\[Aut(A,a) \rightarrow \Aut(q_A,a)=\{g \in O(q_A) \mid g\circ a = a \circ g\}\]
is surjective.
\end{definition}

\begin{example}
Let $(A,a)$ be a lattice  with isometry such that $D_A\cong \FF_p$.
Then $\Aut(D_A,a)=\{\pm id_{D_A}\}$, and we see that $(A,a)$ has simple glue. 
\end{example}

\begin{lemma}
Let $(A,a)$ be a simple $c_n(x)$-lattice. Then $\Aut(A,a)=\langle \pm a \rangle$.
\end{lemma}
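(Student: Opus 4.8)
The plan is to use that, by definition of \emph{simple}, the ring $\mathbb{Z}[a]\cong\mathbb{Z}[x]/c_n(x)$ is the full ring of integers $\mathcal{O}_K$ of $K=\mathbb{Q}(\zeta_n)$, and that $\mathcal{O}_K$ is a PID; thus $(A,a)$ is a lattice structure on a rank-one $\mathcal{O}_K$-module. First I would observe that any $\phi\in\Aut(A,a)$ extends $\mathbb{Q}$-linearly to $A\otimes\mathbb{Q}$ and, since it commutes with $a$, commutes with all of $\mathbb{Q}[a]=K$; as $\rk A=\varphi(n)=[K:\mathbb{Q}]$, the space $A\otimes\mathbb{Q}$ is one-dimensional over $K$, so $\operatorname{End}_K(A\otimes\mathbb{Q})=K$ and $\phi$ is multiplication by some $\alpha\in K^\times$.

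Next I would pin down which $\alpha$ occur. Because $A\otimes\mathbb{Q}$ is one-dimensional over $K$ and $A$ is a finitely generated torsion-free $\mathcal{O}_K$-module, $A$ is a fractional ideal of the PID $\mathcal{O}_K$; multiplication by $\alpha$ maps $A$ onto $A$ if and only if $\alpha\mathcal{O}_K=\mathcal{O}_K$, i.e. $\alpha\in\mathcal{O}_K^\times$. For the isometry condition, note that since $a\in O(A)$ and $a^\sigma=a^{-1}$, the pairing satisfies $\langle \beta u,w\rangle=\langle u,\beta^\sigma w\rangle$ for all $\beta\in\mathcal{O}_K=\mathbb{Z}[a]$ and $u,w\in A$ (prove it first for $\beta=a$, then extend to polynomials in $a$). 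Hence $\langle\alpha u,\alpha w\rangle=\langle\alpha\alpha^\sigma u,w\rangle$, and by non-degeneracy multiplication by $\alpha$ is an isometry if and only if $\alpha\alpha^\sigma=1$.

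Thus $\Aut(A,a)$ is identified with the group of units $\alpha\in\mathcal{O}_K^\times$ satisfying $\alpha\alpha^\sigma=1$. The key step is to show these are exactly the roots of unity. Since $K$ is a CM field, every embedding $\tau\colon K\hookrightarrow\mathbb{C}$ satisfies $\tau(\alpha^\sigma)=\overline{\tau(\alpha)}$, so $\alpha\alpha^\sigma=1$ forces $|\tau(\alpha)|=1$ for all $\tau$; by Kronecker's theorem an algebraic integer all of whose conjugates lie on the unit circle is a root of unity. Conversely every root of unity lies in $\mathcal{O}_K^\times$ and satisfies $\zeta\zeta^\sigma=|\zeta|^2=1$. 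Hence $\Aut(A,a)$ is the group $W_K$ of roots of unity of $K$, acting by multiplication.

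Finally I would match $W_K$ with $\langle\pm a\rangle$. Multiplication by $\zeta_n$ is $a$ and multiplication by $-1$ is $-\id$, so $\langle\pm a\rangle$ corresponds to $\langle\zeta_n,-1\rangle\subseteq W_K$. A short case check closes the argument: if $n$ is odd then $W_K=\mu_{2n}=\mu_n\cdot\mu_2=\langle\zeta_n,-1\rangle$, while if $n$ is even then $-1\in\langle\zeta_n\rangle$ and $W_K=\mu_n=\langle\zeta_n\rangle$; in either case $W_K=\langle\zeta_n,-1\rangle$, whence $\Aut(A,a)=\langle a,-\id\rangle=\langle\pm a\rangle$. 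I expect the only real obstacle to be this last bookkeeping together with the passage from ``norm-one unit'' to ``root of unity''; invoking the CM structure plus Kronecker's theorem is cleaner here than appealing to the unit index of $\mathcal{O}_K^\times$ over $\mathcal{O}_k^\times$.
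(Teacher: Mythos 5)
Your proof is correct and follows essentially the same route as the paper's: identify an automorphism of $(A,a)$ with a unit $\alpha\in\mathcal{O}_K^\times$, deduce $\alpha\alpha^\sigma=1$ from the isometry condition, and conclude via Kronecker's theorem that $\alpha$ is a root of unity. You merely spell out two steps the paper compresses --- the adjunction $\langle\beta u,w\rangle=\langle u,\beta^\sigma w\rangle$ (where the paper instead invokes non-degeneracy of the trace form on the simple lattice) and the closing identification of the roots of unity of $\mathbb{Q}(\zeta_n)$ with $\langle\pm\zeta_n\rangle$, which the paper leaves implicit.
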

\begin{proof}
Let $h \in \Aut(A,a)$. Then $h$ is a $\ZZ[a]$-module homorphism, i.e.
$h \in \ZZ[a]^\times\subseteq K^\times$. Since $h$ is an isometry and $(A,a)$ is simple, we get that 
\[Tr^K_\QQ(hh^\sigma x)=Tr^K_\QQ(x) \quad \forall x \in  K.\]
By non-degeneracy of the trace form, we get $hh^\sigma=1$, i.e. $|h|=1$. By Kronecker's theorem, $h$ is a root of unity.
\end{proof}

\begin{proposition}
Let $(L_0(a),f)$ be a twist of the principal, simple $p(x)$-lattice and $I<\O_K$ such that $D_{L_0(a)} \cong \O_K/I$. Then
\[\Aut(q_{L_0(a)},f) = \{[u] \in (\O_K/I)^\times \mid uu^\sigma \equiv 1 \mod I\}. \]
\end{proposition}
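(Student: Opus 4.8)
The plan is to reduce the computation of $\Aut(q_{L_0(a)},\overline f)$ to the arithmetic of the cyclic $\O_K$-module $D_{L_0(a)}\cong\O_K/I$ and then read off the condition $uu^\sigma\equiv 1$ from the trace form. First I would observe that any $g\in\Aut(q_{L_0(a)},\overline f)$ commutes by definition with $\overline f$, which is multiplication by the residue of $x$, i.e.\ by $\zeta$; since $\O_K=\ZZ[x]/p(x)=\ZZ[\zeta]$, such a $g$ is automatically $\O_K$-linear. As $D_{L_0(a)}\cong\O_K/I$ is a cyclic $\O_K$-module, $g$ is determined by $g(1)=[u]$ and equals multiplication by $[u]$; bijectivity forces $[u]\in(\O_K/I)^\times$, and conversely every multiplication by a unit is $\O_K$-linear and commutes with $\overline f$. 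Thus the whole question becomes: for which units $[u]$ does multiplication by $[u]$ preserve the discriminant form.

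Next I would make the form explicit. By Lemma~\ref{lem:discriminant} and the trace description of the principal lattice, with $\delta\defeq 1/r'(x+x^{-1})$ and $\mathcal D_K=(p'(x))\O_K$ (Remark~\ref{rmk:2principal}), a short computation using $p(x)=x^e r(x+x^{-1})$ gives $L_0^\vee=\tfrac{1}{x^2-1}\O_K$, so one may take $b=(x-1)(x+1)$, and moreover $\tfrac{\delta}{b^\sigma}=\tfrac{-x^e}{p'(x)}$ generates $\mathcal D_K^{-1}$. Transporting $D_{L_0(a)}=\tfrac{1}{ab}\O_K/\O_K$ to $\O_K/I$ (with $I=ab\O_K$), the discriminant bilinear form becomes, for representatives $\xi,\eta\in\O_K$, the expression $b_D(\xi,\eta)=\mathrm{Tr}^K_\QQ\!\big(\tfrac{-x^e}{ab\,p'(x)}\,\xi\,\eta^\sigma\big)\bmod\ZZ$.

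The bilinear computation is then immediate: multiplication by $u$ changes $b_D(\xi,\eta)$ into $b_D(u\xi,u\eta)$, whose difference is $\mathrm{Tr}^K_\QQ\!\big(\tfrac{-x^e(uu^\sigma-1)}{ab\,p'(x)}\,\xi\eta^\sigma\big)$. Setting $\eta=1$ and varying $\xi$ over $\O_K$, the nondegeneracy of the pairing $\O_K\times\mathcal D_K^{-1}\to\ZZ$ shows this vanishes for all $\xi,\eta$ iff $\tfrac{-x^e(uu^\sigma-1)}{ab}\in\O_K$, i.e.\ iff $uu^\sigma\equiv 1\bmod I$ (using that $x$ is a unit). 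This already yields the inclusion ``$\subseteq$'', since an isometry of $q_{L_0(a)}$ is in particular an isometry of $b_{L_0(a)}$, and it delivers the bilinear half of ``$\supseteq$''.

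It remains to upgrade from the bilinear to the quadratic form in ``$\supseteq$'': given $uu^\sigma\equiv1\bmod I$ I must check that multiplication by $u$ preserves $q_D$, not merely $b_D$. On the $p$-primary parts of $D$ for odd $p$ this is automatic, because $2$ is invertible there and $O(q_p)=O(b_p)$, so the only issue is the $2$-part. The defect is $q_D(u\xi)-q_D(\xi)=\mathrm{Tr}^K_\QQ(\alpha_\xi)$ with $\alpha_\xi=\tfrac{(uu^\sigma-1)\,\xi\xi^\sigma\,\delta}{ab\,b^\sigma}$; the reciprocity relations $b^\sigma=-x^{-2}b$ and $p'(x)^\sigma=-x^{2-2e}p'(x)$ show $\alpha_\xi^\sigma=\alpha_\xi$, so $\alpha_\xi\in k$ and $\mathrm{Tr}^K_\QQ(\alpha_\xi)=2\,\mathrm{Tr}^k_\QQ(\alpha_\xi)$ lies in $2\ZZ$ as soon as $\alpha_\xi\in\mathcal D_k^{-1}$. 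I would establish this last integrality $2$-adically, feeding in the evenness of $L_0(a)$ (which gives $a\delta\,N_{K/k}(z)\in\mathcal D_k^{-1}$ for all $z\in\O_K$) together with the square-freeness of $|p(1)p(-1)|$ coming from simplicity of $p(x)$, which controls the ramification of $b=(x-1)(x+1)$ above $2$. This $2$-adic verification of the even (quadratic) refinement is the step I expect to be the main obstacle; the structural reduction and the bilinear computation are routine once the inverse-different description of $b_D$ is in place.
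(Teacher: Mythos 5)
Your proof follows essentially the same route as the paper's: identify elements of $\Aut(q_{L_0(a)},\overline f)$ with units of $\O_K/I$ acting by multiplication, express the discriminant form via the inverse different using $L_0^\vee=\tfrac{1}{x^2-1}\O_K$, read off the bilinear condition $uu^\sigma\equiv 1\bmod I$, and upgrade to the quadratic form using that $1-uu^\sigma$ lies in $k$ together with the square-freeness of $|p(1)p(-1)|=N(\mathcal D^K_k)$. The only loose end is the final integrality $\alpha_\xi\in\mathcal D_k^{-1}$, which you defer to a $2$-adic argument; in fact it closes immediately and globally, with no appeal to evenness: square-freeness forces $\mathcal D^K_k$ to be a product of distinct ramified primes, so $a\mathcal D^K_k\cap k=a(\mathcal D^K_k)^2\cap k=abb^\sigma\O_k$, hence $\tfrac{1-uu^\sigma}{abb^\sigma}\in\O_k$ and $\alpha_\xi\in\tfrac{1}{r'}\O_k=\mathcal D_k^{-1}$ — which is exactly the paper's concluding step.
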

\begin{proof}

Let $g \in \Aut(q_{L_0(a)},f)$. Under the usual identifications $g=[u] \in (\O_K/I)^\times$.
Note that $(f-f^{-1})\O_K=\mathcal{D}^K_k$ is the relative different of $K/k$, and set $d=f-f^{-1}$.
Then $L_0(a)^\vee=\tfrac{1}{ad}\O_K$ and $I=ad\mathcal{O}_K$ (Lemma \ref{lem:discriminant}).
Since $[u]$ preserves the discriminant form, we get that $b(x,y)=b([u]x,[u]y)$ for all $x,y \in \tfrac{1}{ad}\O_K$, i.e.
\[Tr^K_\QQ\left(\frac{1-uu^\sigma}{r'(f+f^{-1})ad^2}\O_K\right) \subseteq \ZZ.\]
By definition of the different, this is equivalent to
\[\frac{1-uu^\sigma}{r'(f+f^{-1})ad^2} \in \mathcal{D}_K^{-1},\]
and consequently
\[1-uu^\sigma \in ad^2r'(f+f^{-1})\mathcal{D}_K^{-1}.\]
Now, the different $\mathcal{D}_K=p'(f)\O_K=(f-f^{-1})r'(f+f^{-1})\O_K$. Hence
\[1-uu^\sigma \in ad\O_K=I\]
as claimed. Conversely, let $u \equiv 1 \mod I$. 
A similar computation shows that the discriminant quadratic form $q_{L_0(a)}$ is preserved 
if and only if \[1-uu^\sigma \in a dd^\sigma \O_k=a (\mathcal{D}^K_k)^2\cap k.\]
However, we already know $1-uu^\sigma \in a\mathcal{D}^K_k \cap k$.
By simplicity, we know that the norm $N(\mathcal{D}^K_k)=|p(1)p(-1)|$ is squarefree, and hence
\[\mathcal{D}^K_k \cap k= (\mathcal{D}_k^K)^2\cap k.\]
\end{proof}
\begin{remark}
 Instead of $|p(-1)p(1)|$ being squarefree, one may assume $K/k$ to be tamely ramified and the proof works.
 However, $\QQ[\zeta_{2^k}]$ is ramified at two and then 
 $\mathcal{D}^K_k \cap k$ is the prime ideal of $\O_k$ above $2$.
 In this case we need the condition $uu^\sigma \equiv 1 \mod I\mathcal{D}^K_k\cap k$.
\end{remark}

\begin{lemma}\label{lem:small_extensions}
	All entries in Table \ref{tbl:possible_dets} except $(24,2^63^4)$ have simple glue. 
\end{lemma}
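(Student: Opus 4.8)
The plan is to show that each listed $g$-lattice $(T,f)$ has few extensions in the sense of Definition~\ref{def:few_extensions}, which is the content of simple glue. Writing $T\cong L_0(b)$ and $\discgroup_T\cong\mathcal O_K/I$ as in Lemma~\ref{lem:discriminant}, the preceding Proposition identifies $\Aut(q_T,\overline f)$ with the norm-one subgroup
\[
\{[u]\in(\mathcal O_K/I)^\times \mid uu^\sigma\equiv 1 \bmod I\},
\]
while the preceding Lemma gives $\Aut(T,f)=\langle\pm f\rangle$, so the image of $\Aut(T,f)$ is the cyclic group $\langle\pm\zeta_n\rangle\subseteq(\mathcal O_K/I)^\times$. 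Since $\zeta_n\zeta_n^\sigma=\zeta_n\zeta_n^{-1}=1$, few extensions is equivalent to
\[
\{[u]\mid uu^\sigma\equiv 1 \bmod I\}=\langle\pm\zeta_n\rangle,
\]
which I abbreviate $(\star)$; only the inclusion ``$\subseteq$'' is in question, i.e.\ the absence of \emph{extra} norm-one classes modulo $I$. Every row with $\det T=p$ prime satisfies $(\star)$ at once, since there $\discgroup_T\cong\FF_p$ and the Example after Definition~\ref{def:few_extensions} gives $\Aut(q_T,\overline f)=\{\pm\id\}$.

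For the remaining rows I would split $\mathcal O_K/I$ by the Chinese Remainder Theorem over the distinct rational primes $p\mid\det T$. As $\sigma$ fixes each rational prime it preserves every $p$-primary block, the norm-one condition factors accordingly, and $(\star)$ breaks into two tasks: first, that in each local block the norm-one subgroup coincides with the image of $\langle\zeta_n\rangle$; second, that the \emph{diagonal} image of $\langle\pm\zeta_n\rangle$ fills the product of these blocks. The first task is the routine part, and three shapes of block occur. If $I_p=\mathfrak p$ is a ramified prime to the first power, then $\mathcal O_K/\mathfrak p$ is a residue field on which $\sigma$ acts as the nontrivial Frobenius—one checks that $\sigma$ lies outside the inertia subgroup using Lemma~\ref{lem:zeta} and the explicit subgroups of $(\ZZ/n)^\times$—so $u\mapsto uu^\sigma$ is a power map whose kernel is exactly the prime-to-$p$ roots of unity carried by $\langle\zeta_n\rangle$. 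If $I_p=\mathfrak p^j$ with $j\ge 2$ and $\mathfrak p^\sigma=\mathfrak p$, the same count runs along the filtration $1+\mathfrak p^i$, using that the $p$-power part $\zeta_{p^a}$ of $\zeta_n$ is a $1$-unit whose powers exhaust the norm-one $1$-units modulo $\mathfrak p^j$. Finally, if $p$ splits as $\mathfrak p^\sigma=\overline{\mathfrak p}$, then $\sigma$ swaps the two factors of $\mathcal O_K/(\mathfrak p\overline{\mathfrak p})\cong\FF_q\times\FF_q$, the norm-one subgroup is the antidiagonal of order $q-1$, and $\zeta_n$ surjects onto it because its image already generates $\FF_q^\times$. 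In all three shapes the local norm-one subgroup equals the image of $\langle\zeta_n\rangle$.

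The conceptual heart is the second task, and it is precisely here that a single exception is forced. Once the first task is in hand the total norm-one subgroup is the \emph{product} $\prod_p\langle\zeta_n\rangle_p$ of the local cyclic images, whereas the image of $\Aut(T,f)$ is only the \emph{diagonal} $\langle\pm\zeta_n\rangle$; these agree exactly when the local orders $\mathrm{ord}_p(\zeta_n)$ are pairwise coprime, and otherwise the diagonal is proper and $(\star)$ fails. Running through the multi-prime rows of Table~\ref{tbl:possible_dets} one finds the local orders pairwise coprime in every case but one: $(12,2^23^2)$ gives $(3,4)$, $(20,2^45^2)$ gives $(5,4)$, $(24,2^23^4)$ gives $(3,8)$, and $(36,2^63^2)$ gives $(9,4)$. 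The exception is $(24,2^63^4)$: there $2$ is ramified with residue degree $2$ as $\mathfrak p_2^3$, so the $1$-unit $\zeta_8$ pushes $\mathrm{ord}_2(\zeta_{24})$ up to $12$, while the split prime $3$ yields $\mathrm{ord}_3(\zeta_{24})=|\FF_9^\times|=8$; the two orders share the factor $4$, the diagonal $\langle\pm\zeta_{24}\rangle$ of order $24$ sits strictly inside the norm-one product of order $96$, and a genuine extra automorphism of $q_T$ commuting with $\overline f$ appears.

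I expect the main obstacle to be the local bookkeeping at the ramified primes $2$ and $3$: pinning down $\mathrm{ord}_p(\zeta_n)$ along the $1$-unit filtration for the higher prime powers (the $2$-power rows $(16,2^6),(16,2^8),(32,2^4),(32,2^6)$ and the mixed $(24,\ast),(36,\ast)$ rows) and then verifying the coprimality dichotomy row by row. It is exactly this computation that singles out $(24,2^63^4)$ as the unique entry without simple glue.
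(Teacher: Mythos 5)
Your reduction is the same one the paper uses: identify $\Aut(q_T,\overline f)$ with the norm-one units of $\mathcal{O}_K/I$ via the preceding Proposition, identify the image of $\Aut(T,f)=\langle\pm f\rangle$ with $\langle\pm\zeta_n\rangle\bmod I$, and compare the two subgroups. The paper carries this out by listing the units explicitly for the single representative entry $(27,3^3)$ and declares the other cases similar, so your CRT decomposition together with the lcm-versus-product criterion is a genuinely better organization of the multi-prime rows, and it does isolate the structural reason why $(24,2^63^4)$ fails (local orders $12$ and $8$ sharing the factor $4$).

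However, your ``first task'' is not correct as stated at the wildly ramified prime $2$ in the pure $2$-power rows $n=8,16,32$ of the table. There $c_{2^d}$ is not a simple reciprocal polynomial ($|c_{2^d}(1)c_{2^d}(-1)|=4$), so the Proposition you invoke does not apply verbatim: by the Remark following it, $\Aut(q_T,\overline f)$ is cut out by the finer congruence $uu^\sigma\equiv 1\bmod I\cdot(\mathcal{D}^K_k\cap k)$, not by $uu^\sigma\equiv 1\bmod I$. With the bilinear condition you are using, the claim that the powers of $\zeta_{2^a}$ exhaust the norm-one $1$-units along the filtration $1+\mathfrak{p}^i$ fails: for the entry $(8,2^4)$ one has $I=\mathfrak{p}^4$ with $\pi=1-\zeta_8$, and $u=1+\pi^3$ satisfies $uu^\sigma-1=\pi^3(1-\zeta_8^{-3})-\zeta_8^{-3}\pi^6\in\mathfrak{p}^4$, although $u$ is not congruent to any $\pm\zeta_8^k$ modulo $\mathfrak{p}^4$; it is excluded only by the quadratic refinement, since $v_{\mathfrak{p}}(uu^\sigma-1)=4<6=v_{\mathfrak{p}}(I\mathcal{D}^K_k)$. (At odd ramified primes one has $\mathcal{D}^K_k\cap k=(\mathcal{D}^K_k)^2\cap k$, so the two conditions agree and your sketch is fine there; this is why the problem is invisible in the paper's worked case.) A smaller point: the entry you quote as $(12,2^23^2)$ is printed as $(12,2^23^4)$ in Table \ref{tbl:possible_dets}; with that value $I_3=\mathfrak{p}_3^2$, the local orders become $(3,12)$, and your own criterion would manufacture a second exception. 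You should either justify that the printed determinant is a misprint for $2^23^2$ (which is what Table \ref{tbl:real_dets1} and the divisibility $\det T\mid\res(c_{12},\mu)$ of Proposition \ref{prop:det} suggest) or treat that row separately; as written the argument does not yet deliver the lemma's assertion that $(24,2^63^4)$ is the \emph{only} exception.
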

\begin{proof}
We do the calculation for $(27,3^3)$. The other cases are similar.
Set $\zeta=\zeta_{27}$. Then $I=(1-\zeta)^3$, and $\O_K/I=\ZZ[\zeta]/(1-\zeta)^3$ has $18$ units.
They are given by \[u_\epsilon=\epsilon_0 + \epsilon_1 (1-\zeta)+ \epsilon_2(1-\zeta)^2,\quad (\epsilon_0 \in\{1,2\},\;\epsilon_1,\epsilon_2\in\{0,1,2\})\]
We obtain the equation
\[0 {\equiv}(1-u_\epsilon u_\epsilon^\sigma)\equiv \epsilon_0(\epsilon_1+\epsilon_2)(2-\zeta-\zeta^{-1}) \mod (1-\zeta)^3.\]
We get $6$ distinct solutions for $u_\epsilon$. However $\pm \zeta^k$ for $k \in {1,2,3}$ are all distinct modulo $(1-\zeta)^3$.
The claim follows.
\end{proof}

For $n\in \NN$ we denote by $\mathcal{S}_n$ the symmetric group of $n$ elements and by $\mathcal{D}_n$ the dihedral group - the symmetry group of a regular polygon with $n$ sides.
\begin{lemma}\label{lem:vinberg}
Let $L$ be a hyperbolic lattice. Fix a chamber of the positive cone and denote by
$O^+(L)/W(L)\cong \Gamma(L)\subseteq O^+(L)$ the subgroup generated by the isometries preserving the chamber. Set
\[\phi: \Gamma(L) \rightarrow O(q_L) \quad f \mapsto f|D_L\]
then for $L\neq U(3)\oplus A_2$ in Table \ref{table:sym} $\phi$ is surjective. For  $L=U(3) \oplus A_2$ the cokernel of $\phi$ is generated by $-id$.
It is injective as well for all $L$ in the table except $U(2)\oplus 2D_4$ and $U(2)\oplus D_4 \oplus E_8$ where its kernel is of order $2$. \\
\begin{table}[h]
\caption{Symmetry groups of a chamber}\label{table:sym}
\begin{minipage}{0.3\linewidth}
\centering
{
	\renewcommand{\arraystretch}{1.2}
	\rowcolors{1}{}{lightgray}
	\begin{tabular}{llll}
		\toprule
		$L$				    &$\Gamma(L)$ \\
		\hline
		$U\oplus A_2$   	& $\mathcal{S}_2$\\
		$U(3)\oplus A_2$    & $\mathcal{D}_4$ \\
		$U\oplus 4 A_1$     & $\mathcal{S}_4$ \\
		$U(2)\oplus D_4$    & $\mathcal{S}_5$ \\
		$U\oplus E_8$ 	    & $\mathcal{S}_1$ \\
		$U\oplus D_4 \oplus E_8$ & $\mathcal{S}_3$\\
 	\bottomrule
	\end{tabular}
}
\end{minipage}
\begin{minipage}{0.25\linewidth}
\centering
{
	\renewcommand{\arraystretch}{1.2}
	\rowcolors{1}{}{lightgray}
	\begin{tabular}{ll}
		\toprule
		$L$	&$\Gamma(L)$\\
		\hline
		$U\oplus 2 A_2$     & $\mathcal{D}_4$ \\
		$U(3)\oplus 2 A_2$  & $\mathcal{S}_6 \times \mathcal{S}_2$\\
		$U\oplus D_4$	    & $\mathcal{S}_3$ \\ 
		$U  \oplus E_6$		& $\mathcal{S}_2$ \\
		$U(2)\oplus 2D_4$   & $\mathcal{S}_8 \times \mathcal{S}_2$\\
		$U(2) \oplus D_4 \oplus E_8 $ & $S_5 \times S_2$\\
 	\bottomrule
	\end{tabular}
}
\end{minipage}
\end{table}
\FloatBarrier
\end{lemma}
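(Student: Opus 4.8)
The plan is to analyze $\phi$ through the factorization $O^+(L)\to\Gamma(L)\xrightarrow{\phi}O(q_L)$ together with two structural facts that make it tractable. First, for a root $r$ with $r^2=-2$ the reflection $s_r(x)=x+(x\cdot r)r$ satisfies $s_r(x)\equiv x \pmod L$ for every $x\in L^\vee$, since $x\cdot r\in\ZZ$; hence $W(L)$ acts trivially on $q_L$, so $\phi$ is well defined on $\Gamma(L)=O^+(L)/W(L)$ and $\operatorname{im}\phi$ equals the image of all of $O^+(L)$ in $O(q_L)$. Second, since $L$ is hyperbolic, $-\id$ reverses the two components of the positive cone, so $O(L)=O^+(L)\times\langle-\id\rangle$ and $-\id$ induces $-\id_{q_L}$.

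For surjectivity I would first check, entry by entry, that $\rk L\ge 2+l(D_L)$ for every lattice of Table~\ref{table:sym} except $U(3)\oplus A_2$; then Theorem~\ref{thm:orthogonal_group_surjective} shows $O(L)\to O(q_L)$ is onto. Combined with the splitting this forces
\[O(q_L)=\langle\operatorname{im}\phi,\,-\id_{q_L}\rangle,\]
so $\operatorname{coker}\phi$ is cyclic, generated by $[-\id_{q_L}]$, of order at most $2$, and is trivial exactly when $-\id_{q_L}\in\operatorname{im}\phi$. For the $2$-elementary lattices ($U\oplus 4A_1$, $U\oplus D_4$, $U\oplus D_4\oplus E_8$, $U(2)\oplus D_4$, $U(2)\oplus 2D_4$, $U(2)\oplus D_4\oplus E_8$, and the unimodular $U\oplus E_8$) one has $-\id_{q_L}=\id$, so $\phi$ is automatically surjective. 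For the $3$-elementary lattices with a genuine $U$ summand ($U\oplus A_2$, $U\oplus 2A_2$, $U\oplus E_6$) the orientation-preserving isometry $\operatorname{swap}_U\oplus(-\id_M)$ induces $\id_{D(U)}\oplus(-\id_{D(M)})=-\id_{q_L}$, giving surjectivity. For $U(3)\oplus 2A_2$ the block-diagonal trick fails (on the $U(3)$ summand $O^+(U(3))=\{\id,\operatorname{swap}\}$ never realizes $-\id$ on $(\ZZ/3)^2$), and here I would produce a mixing isometry realizing $-\id_{q_L}$ directly from the fundamental chamber. Finally, for the exceptional $U(3)\oplus A_2$, where Theorem~\ref{thm:orthogonal_group_surjective} does not apply, I would compute $|O(q_L)|$ for the split form on $(\ZZ/3)^3$ and show $-\id_{q_L}\notin\operatorname{im}\phi$, so that $\operatorname{coker}\phi=\langle-\id\rangle$, as claimed.

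For injectivity, $\ker\phi$ is precisely the group of chamber symmetries in $\Gamma(L)$ acting trivially on $q_L$. Using the explicit $\Gamma(L)$ listed in the table and the induced permutation action on the facets of the chamber, I would show that no nontrivial symmetry fixes $q_L$ for the generic entries, while for $U(2)\oplus 2D_4$ and $U(2)\oplus D_4\oplus E_8$ the distinguished $\mathcal{S}_2$ factor in $\Gamma(L)=\mathcal{S}_8\times\mathcal{S}_2$, resp. $\mathcal{S}_5\times\mathcal{S}_2$, acts trivially on the $2$-elementary form, producing the order-$2$ kernel. A convenient bookkeeping device is to compare $|\Gamma(L)|$ from the table with $|O(q_L)|$: once $\operatorname{coker}\phi$ is known, $|\ker\phi|=|\Gamma(L)|\cdot|\operatorname{im}\phi|^{-1}$ is pinned down by $|O(q_L)|$.

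The main obstacle is not the conceptual reductions above but the input they rely on: running Vinberg's algorithm to produce a fundamental polyhedron for $W(L)$ in each case, identifying its symmetry group $\Gamma(L)$, and exhibiting each symmetry as an explicit isometry of $L$ whose action on $q_L$ can be read off. The two genuinely delicate points are the realization of $-\id_{q_L}$ by an orientation-preserving isometry for $U(3)\oplus 2A_2$ together with the proof that no such isometry exists for $U(3)\oplus A_2$, and the identification of the order-$2$ kernel for the two $U(2)$-lattices; both require the detailed combinatorics of the chamber rather than the general splitting argument.
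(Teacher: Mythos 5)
Your reduction of the surjectivity statements is sound and genuinely different from the paper's argument: the observations that $W(L)$ acts trivially on $q_L$ (so $\operatorname{im}\phi$ is the image of all of $O^+(L)$), that $O(L)=O^+(L)\times\langle-\id\rangle$ for a hyperbolic lattice, and that Theorem \ref{thm:orthogonal_group_surjective} applies to every entry of Table \ref{table:sym} except $U(3)\oplus A_2$, together reduce surjectivity of $\phi$ to realizing $-\id_{q_L}$ by an orientation-preserving isometry; the $2$-elementary entries and the $U\oplus M$ swap trick then come for free. The paper does none of this: its proof runs Vinberg's algorithm, identifies $\Gamma(L)$ with the symmetry group of the Dynkin diagram of a fundamental root system (the sequence is exact because the fundamental roots span $L\otimes\QQ$), and computes the kernel of $\phi$ by machine. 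Your route buys conceptual clarity for most of the surjectivity claims; the paper's route is what actually produces the table.

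That said, two genuine gaps remain. First, everything specific to the table --- the groups $\Gamma(L)$ themselves, all the injectivity claims, the order-$2$ kernels for $U(2)\oplus 2D_4$ and $U(2)\oplus D_4\oplus E_8$, and the two deferred $U(3)$ surjectivity cases --- still rests on the fundamental root systems, and you never state the structural fact that makes those computations finite and checkable, namely $\Gamma(L)\cong\operatorname{Sym}(\text{Dynkin diagram})$ because a chamber symmetry is determined by (and induced by) its permutation of the fundamental roots, which form a basis of $L\otimes\QQ$. Without that, ``running Vinberg's algorithm'' does not yet yield the groups in the table. Second, your plan for $U(3)\oplus A_2$ --- compute $|O(q_L)|$ and conclude $\operatorname{coker}\phi=\langle-\id\rangle$ --- cannot work as literally stated: $q_L$ is a nondegenerate quadratic form on $\FF_3^3$, so $|O(q_L)|=48$, whereas $|\langle\operatorname{im}\phi,-\id_{q_L}\rangle|\le 2\,|\Gamma(L)|=16$. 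Your splitting argument only shows that the image of $O(L)\to O(q_L)$ equals $\langle\operatorname{im}\phi,-\id_{q_L}\rangle$; for $U(3)\oplus A_2$ that image is a proper subgroup of $O(q_L)$, and the cokernel assertion must be read relative to it (which is all the later applications of the lemma use). You need to make that interpretation explicit, otherwise the very computation you propose will appear to refute the statement you are trying to prove.
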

\begin{proof}
	In all cases we can compute a fundamental root system using Vinberg's algorithm
	\cite[§3]{vinberg:algorithm}.
	An isometry preserves the chamber corresponding to the fundamental root system if and only if it preserves the fundamental root system. We get a sequence
	\[ 0 \rightarrow O^+(L)/W(L)\cong \Gamma(L)\rightarrow Sym(\Gamma)\rightarrow 0\]
	where $Sym(\Gamma)$ denotes the symmetry group of the dual graph of a fundamental root system. Since the fundamental roots form a basis of $L\otimes \QQ$, the sequence is exact. 
	The calculation of $\ker \phi$ is done by computer. For $L=U(2)\oplus 2D_4$ see also \cite[2.6]{kondo:8_points}.
\end{proof}
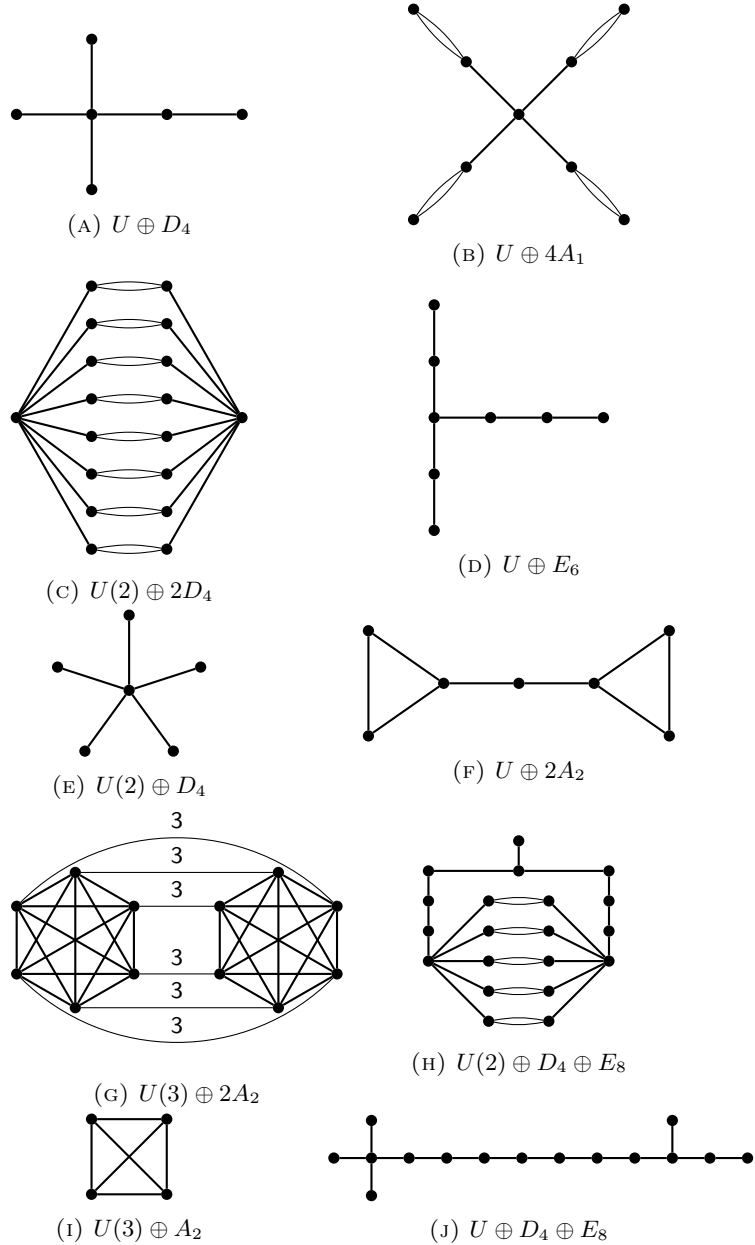
\begin{figure}
\begin{minipage}{0.4\linewidth}
\begin{center}
\begin{tikzpicture}
\tikzset{VertexStyle/.style= {fill=black, inner sep=1.5pt, shape=circle}}	
	\Vertex[NoLabel,x=7,y=1]{5}
	\Vertex[NoLabel,x=6,y=0]{6}
	\Vertex[NoLabel,x=7,y=0]{7}
	\Vertex[NoLabel,x=8,y=0]{8}
	\Vertex[NoLabel,x=9,y=0]{9}
	\Vertex[NoLabel,x=7,y=-1]{10}
		
	\Edges(6,7,8,9)
	\Edges(5,7,10) 
	\end{tikzpicture}
	\subcaption{$U \oplus D_4$}
	\end{center}
\end{minipage}
\begin{minipage}{0.4\linewidth}
	\begin{center}
\begin{tikzpicture}
\tikzset{VertexStyle/.style= {fill=black, inner sep=1.5pt, shape=circle}}	
\Vertex[NoLabel,x=0,y=0]{0}
\Vertex[NoLabel,x=-0.7,y=-0.7]{1}
\Vertex[NoLabel,x=-0.7,y=+0.7]{2}
\Vertex[NoLabel,x=+0.7,y=-0.7]{3}
\Vertex[NoLabel,x=+0.7,y=+0.7]{4}
\Vertex[NoLabel,x=-0.7*2,y=-0.7*2]{5}
\Vertex[NoLabel,x=-0.7*2,y=+0.7*2]{6}
\Vertex[NoLabel,x=+0.7*2,y=-0.7*2]{7}
\Vertex[NoLabel,x=+0.7*2,y=+0.7*2]{8}

\Edges(1,0,3)
\Edges(2,0,4)

\path (1) edge[bend right=10] node [left] {} (5);
\path (2) edge[bend right=10] node [left] {} (6);
\path (3) edge[bend right=10] node [left] {} (7);
\path (4) edge[bend right=10] node [left] {} (8);

\path (1) edge[bend left=10] node [left] {} (5);
\path (2) edge[bend left=10] node [left] {} (6);
\path (3) edge[bend left=10] node [left] {} (7);
\path (4) edge[bend left=10] node [left] {} (8);
\end{tikzpicture}
\subcaption{$U \oplus 4 A_1$}
	\end{center}
\end{minipage}

\begin{minipage}{0.4\linewidth}
	\begin{center}
\begin{tikzpicture}
\tikzset{VertexStyle/.style= {fill=black, inner sep=1.5pt, shape=circle}}	
\Vertex[NoLabel,x=2,y=0]{0a}
\Vertex[NoLabel,x=-1,y=0]{0}
\Vertex[NoLabel,x=0,y=0.5-0.25]{1}
\Vertex[NoLabel,x=0,y=1-0.25]{2}
\Vertex[NoLabel,x=0,y=1.5-0.25]{3}
\Vertex[NoLabel,x=0,y=2-0.25]{4}
\Vertex[NoLabel,x=1,y=0.5-0.25]{5}
\Vertex[NoLabel,x=1,y=1-0.25]{6}
\Vertex[NoLabel,x=1,y=1.5-0.25]{7}
\Vertex[NoLabel,x=1,y=2-0.25]{8}

\Vertex[NoLabel,x=0,y=-0.5+0.25]{1a}
\Vertex[NoLabel,x=0,y=-1+0.25]{2a}
\Vertex[NoLabel,x=0,y=-1.5+0.25]{3a}
\Vertex[NoLabel,x=0,y=-2+0.25]{4a}
\Vertex[NoLabel,x=1,y=-0.5+0.25]{5a}
\Vertex[NoLabel,x=1,y=-1+0.25]{6a}
\Vertex[NoLabel,x=1,y=-1.5+0.25]{7a}
\Vertex[NoLabel,x=1,y=-2+0.25]{8a}

\Edges(1,0,3)
\Edges(2,0,4)
\Edges(1a,0,3a)
\Edges(2a,0,4a)

\Edges(5,0a,7)
\Edges(6,0a,8)
\Edges(5a,0a,7a)
\Edges(6a,0a,8a)

\path (1) edge[bend right=10] node [left] {} (5);
\path (2) edge[bend right=10] node [left] {} (6);
\path (3) edge[bend right=10] node [left] {} (7);
\path (4) edge[bend right=10] node [left] {} (8);

\path (1) edge[bend left=10] node [left] {} (5);
\path (2) edge[bend left=10] node [left] {} (6);
\path (3) edge[bend left=10] node [left] {} (7);
\path (4) edge[bend left=10] node [left] {} (8);

\path (1a) edge[bend right=10] node [left] {} (5a);
\path (2a) edge[bend right=10] node [left] {} (6a);
\path (3a) edge[bend right=10] node [left] {} (7a);
\path (4a) edge[bend right=10] node [left] {} (8a);

\path (1a) edge[bend left=10] node [left] {} (5a);
\path (2a) edge[bend left=10] node [left] {} (6a);
\path (3a) edge[bend left=10] node [left] {} (7a);
\path (4a) edge[bend left=10] node [left] {} (8a);

\end{tikzpicture}
\subcaption{$U(2) \oplus 2D_4$}
	\end{center}
\end{minipage}
\begin{minipage}{0.4\linewidth}
	\begin{center}
	\begin{tikzpicture}
	\tikzset{VertexStyle/.style= {fill=black, inner sep=1.5pt, shape=circle}}
	\Vertex[NoLabel,x=0,y=-2*0.75]{1}
	\Vertex[NoLabel,x=0,y=-1*0.75]{2}
	\Vertex[NoLabel,x=0,y=0]{3}
	\Vertex[NoLabel,x=0,y=1*0.75]{4}
	\Vertex[NoLabel,x=0,y=2*0.75]{5}
	\Vertex[NoLabel,x=1*0.75,y=0*0.75]{6}
	\Vertex[NoLabel,x=2*0.75,y=0]{7}
	\Vertex[NoLabel,x=3*0.75,y=0]{8}
	\Edges(1,2,3,4,5)
	\Edges(3,6,7,8) 
	\end{tikzpicture}
	\subcaption{$U \oplus E_6$}
	\end{center}
\end{minipage}
\begin{minipage}{0.4\linewidth}
\begin{center}
	\begin{tikzpicture}
	\tikzset{VertexStyle/.style= {fill=black, inner sep=1.5pt, shape=circle}}
	\Vertex[NoLabel,x=0,y=1]{1}
	\Vertex[NoLabel,x=-0.95105,y=0.30901]{2}
	\Vertex[NoLabel,x=-0.587785,y=-0.809016]{3}
	\Vertex[NoLabel,x=0.587785,y=-0.809016]{4}
	\Vertex[NoLabel,x=0.95105,y=0.30901]{5}
	\Vertex[NoLabel,x=0,y=0]{0}
	\Edges(1,0,2)
	\Edges(3,0,4)
	\Edges(0,5) 
	\end{tikzpicture}
	\subcaption{$U(2) \oplus D_4$}
\end{center}
\end{minipage}
\begin{minipage}{0.4\linewidth}
\begin{center}
	\begin{tikzpicture}
	\tikzset{VertexStyle/.style= {fill=black, inner sep=1.5pt, shape=circle}}
	\Vertex[NoLabel,x=-2,y=0.7]{1}
	\Vertex[NoLabel,x=-2,y=-0.7]{2}
	\Vertex[NoLabel,x=-1,y=0]{5}
	\Vertex[NoLabel,x=0,y=0]{6}
	\Vertex[NoLabel,x=1,y=0]{7}
	\Vertex[NoLabel,x=2,y=0.7]{3}
	\Vertex[NoLabel,x=2,y=-0.7]{4}
	\Edges(1,2,5,1)
	\Edges(3,4,7,3)
	\Edges(5,6,7) 
	\end{tikzpicture}
	\subcaption{$U \oplus 2A_2$}
	\end{center}
\end{minipage}
\begin{minipage}{0.4\linewidth}
\begin{center}
	\begin{tikzpicture}[scale=0.9]
	\tikzset{VertexStyle/.style= {fill=black, inner sep=1.5pt, shape=circle}}
	\Vertex[NoLabel,y=0.5,x=0.866]{1}
	\Vertex[NoLabel,y=-0.5,x=0.866]{2}
	\Vertex[NoLabel,y=-1,x=0]{3}
	\Vertex[NoLabel,y=-0.5,x=-0.866]{4}
	\Vertex[NoLabel,y=0.5,x=-0.866]{5}
	\Vertex[NoLabel,y=1,x=0]{6}
	\Edges(1,2)
	\Edges(1,3)
	\Edges(1,4)
	\Edges(1,5)
	\Edges(1,6)
	\Edges(2,3)
    \Edges(2,4)
	\Edges(2,5)
	\Edges(2,6)
    \Edges(3,4)
	\Edges(3,5)
	\Edges(3,6)
	\Edges(4,5)
	\Edges(4,6)
	\Edges(5,6)

	\Vertex[NoLabel,y=0.5,x=3+0.866]{1a}
	\Vertex[NoLabel,y=-0.5,x=3+0.866]{2a}
	\Vertex[NoLabel,y=-1,x=3+0]{3a}
	\Vertex[NoLabel,y=-0.5,x=3-0.866]{4a}
	\Vertex[NoLabel,y=0.5,x=3-0.866]{5a}
	\Vertex[NoLabel,y=1,x=0+3]{6a}
	\Edges(1a,2a)
	\Edges(1a,3a)
	\Edges(1a,4a)
	\Edges(1a,5a)
	\Edges(1a,6a)
	\Edges(2a,3a)
    \Edges(2a,4a)
	\Edges(2a,5a)
	\Edges(2a,6a)
    \Edges(3a,4a)
	\Edges(3a,5a)
	\Edges(3a,6a)
	\Edges(4a,5a)
	\Edges(4a,6a)
	\Edges(5a,6a)
	
	\path[every node/.style={font=\sffamily\small}]
        (6) edge [above] node {3} (6a)
        (3) edge [above] node {3} (3a)
        (1) edge [above] node {3} (5a)
        (2) edge [above] node {3} (4a)
        (5) edge [above, bend left=45] node {3} (1a)
        (4) edge [above, bend right=45] node {3} (2a);
	\end{tikzpicture}
	\subcaption{$U(3) \oplus 2A_2$}
	\end{center}
\end{minipage}
\begin{minipage}{0.3\linewidth}
	\begin{center}
		\begin{tikzpicture}[scale=0.8]
		\tikzset{VertexStyle/.style= {fill=black, inner sep=1.5pt, shape=circle}}	
		\Vertex[NoLabel,x=-1,y=-1]{1}
		\Vertex[NoLabel,x=0 ,y=-1]{1a}
		\Vertex[NoLabel,x=-1,y=-0.5]{2}
		\Vertex[NoLabel,x=0 ,y=-0.5]{2a}
		\Vertex[NoLabel,x=-1,y= 0]{3}
		\Vertex[NoLabel,x=0 ,y= 0]{3a}
		\Vertex[NoLabel,x=-1,y= 0.5]{4}
		\Vertex[NoLabel,x=0 ,y= 0.5]{4a}
		\Vertex[NoLabel,x=-1,y= 1]{5}
		\Vertex[NoLabel,x=0 ,y= 1]{5a}

		\Vertex[NoLabel,x=-2 ,y=0]{0}
		\Vertex[NoLabel,x=1 ,y= 0]{0a}
		
		\Vertex[NoLabel,x=-2 ,y=0.5]{6}
		\Vertex[NoLabel,x=1 ,y=0.5]{6a}
		\Vertex[NoLabel,x=-2 ,y=1]{7}
		\Vertex[NoLabel,x=1 ,y= 1]{7a}
		\Vertex[NoLabel,x=-2 ,y=1.5]{8}
		\Vertex[NoLabel,x=1 ,y= 1.5]{8a}
		\Vertex[NoLabel,x=-0.5 ,y=1.5]{9}
		\Vertex[NoLabel,x=-0.5 ,y= 2]{10}

		\path (1) edge[bend right=10] node [left] {} (1a);
		\path (2) edge[bend right=10] node [left] {} (2a);
		\path (3) edge[bend right=10] node [left] {} (3a);
		\path (4) edge[bend right=10] node [left] {} (4a);
		\path (5) edge[bend right=10] node [left] {} (5a);
		
		\path (1) edge[bend left=10] node [left] {} (1a);
		\path (2) edge[bend left=10] node [left] {} (2a);
		\path (3) edge[bend left=10] node [left] {} (3a);
		\path (4) edge[bend left=10] node [left] {} (4a);
		\path (5) edge[bend left=10] node [left] {} (5a);
		
		\Edges(1,0,2)
		\Edges(3,0,4)
		\Edges(5,0)
		\Edges(1a,0a,2a)
		\Edges(3a,0a,4a)
		\Edges(5a,0a)
		\Edges(0,6,7,8,9,8a,7a,6a,0a)
		\Edges(9,10)
				
		\end{tikzpicture}
		\subcaption{$U(2) \oplus D_4 \oplus E_8$}
	\end{center}
\end{minipage}
\begin{minipage}{0.4\linewidth}
	\begin{center}
		\begin{tikzpicture}
		\tikzset{VertexStyle/.style= {fill=black, inner sep=1.5pt, shape=circle}}
		\Vertex[NoLabel,x=0,y=0]{1}
		\Vertex[NoLabel,x=1,y=0]{2}
		\Vertex[NoLabel,x=0,y=1]{3}
		\Vertex[NoLabel,x=1,y=1]{4}
		\Edges(1,2,3,4,1)
		\Edges(1,3)
		\Edges(2,4)
		\end{tikzpicture}
		\subcaption{$U(3) \oplus A_2$}
	\end{center}
\end{minipage}
\begin{minipage}{0.4\linewidth}
	\begin{center}
		\begin{tikzpicture}
		\tikzset{VertexStyle/.style= {fill=black, inner sep=1.5pt, shape=circle}}
		\Vertex[NoLabel,x=0,y=0]{1}
		\Vertex[NoLabel,x=0.5,y=0]{2}
		\Vertex[NoLabel,x=1,y=0]{3}
		\Vertex[NoLabel,x=1.5,y=0]{4}
		\Vertex[NoLabel,x=2,y=0]{5}
		\Vertex[NoLabel,x=2.5,y=0]{6}
		\Vertex[NoLabel,x=3,y=0]{7}
		\Vertex[NoLabel,x=3.5,y=0]{8}
		\Vertex[NoLabel,x=4,y=0]{9}
		\Vertex[NoLabel,x=4.5,y=0]{10}
		\Vertex[NoLabel,x=5,y=0]{11}
		\Vertex[NoLabel,x=5.5,y=0]{12}
		\Vertex[NoLabel,x=4.5,y=0.5]{13}
		\Vertex[NoLabel,x=0.5,y=0.5]{14}
		\Vertex[NoLabel,x=0.5,y=-0.5]{15}
		\Edges(1,2,3,4,5,6,7,8,9,10,11,12)
		\Edges(14,2,15)
		\Edges(13,10)
		\end{tikzpicture}
		\subcaption{$U \oplus D_4 \oplus E_8$}
	\end{center}
\end{minipage}
\caption{Dynkin diagrams of the fundamental root systems}
\end{figure}
\begin{proof}[Proof of Theorem \ref{thm:pair_classification}]
	Fix some pair $(X,G)$ as in the theorem and write $G=\langle g \rangle$ for
	$g \in G$ such that $g^*\omega=\zeta_n \omega$. In order to prove the theorem,
	we have to show that the pair $(H^2(X,\ZZ),g)$ is unique up to isomorphism.
	We have seen that $n$ and $d=\det T$ determine $(T,g|_T)$ (and thus $X$ by Thm. \ref{thm:main}) up to isomorphism.
	By Lemma \ref{lem:small_extensions}, $(T,g|_T)$ has simple glue. Hence
	the isomorphism class of $(H^2(X,\ZZ),g)$ is determined by the isomorphism class of 
	$(\NS,g|_{\NS})$. What remains is to determine all possible isomorphism classes of 
	$(\NS,g|_{\NS})$ for $(n,d)$ fixed. This is done in the following lemmas.
\end{proof}
	
\begin{lemma}
For \boldmath$(p,p)$\unboldmath, $p=13,17,19$, $g|_{\NS}$ is the identity.

\end{lemma}
\begin{proof}
Since the order of $g$ on $\NS$ is strictly smaller than $n=p$ in these cases, it can only be one. 
\end{proof}

\begin{proof}[Proof of Theorem \ref{thm:pair_finite_aut}]
(1) By Lemma \ref{lem:vinberg}, we have for these lattices that 
\[O^+(\NS)/W(\NS)\cong O(q_{\NS})\cong O(q_{\Trans}).\] 
Consequently, automorphisms are determined by their action on the transcendental lattice and this group is generated by $g_{(n,d)}$. 
(2) In this case $\phi: \Gamma(\NS) \rightarrow O(q_{\NS})$ has a kernel of order two and there are exactly two possibilities for $g|\NS$. They differ by an element of the kernel corresponding to a symplectic automorphism of order two.
\end{proof}

We note the following theorem on the rational square class of a lattice with isometry for later use.
\begin{theorem}\cite[3.3.14 (ii)]{nebe:habil}\label{thm:det_is_square}
Let $(L,f)$ be a $c_n(x)^m$-lattice and $f$ of order $n$.
\[\det L \in 
\begin{cases}
p^m \cdot \left(\QQ^\times\right)^2, &\mbox{ for } n=p^k, p\neq 2,\\
\left(\QQ^\times\right)^2, &\mbox {  else.}
\end{cases}\]
\end{theorem}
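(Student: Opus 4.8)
The plan is to pass to the rational quadratic space and exploit that $f$ endows it with a Hermitian structure over the CM field $K=\QQ(\zeta_n)$. Since the minimal polynomial $c_n(x)$ is irreducible over $\QQ$, the space $V=L\otimes\QQ$ is a $K$-vector space of dimension $m$, with $f$ acting as $\zeta_n$. The relation $\langle fx,fy\rangle=\langle x,y\rangle$ means that multiplication by $\zeta_n$ is an isometry whose adjoint is multiplication by $\zeta_n^{-1}=\zeta_n^\sigma$; hence there is a unique $K/k$-Hermitian form $h$ on $V$ with $\langle x,y\rangle=\mathrm{Tr}_{K/\QQ}\bigl(h(x,y)\bigr)$, where $\sigma$ is the Galois involution $\zeta_n\mapsto\zeta_n^{-1}$ and $k=\QQ(\zeta_n+\zeta_n^{-1})$. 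Diagonalising $h=\langle\mu_1,\dots,\mu_m\rangle$ with $\mu_i\in k^\times$ gives an orthogonal decomposition $\langle\,,\rangle=\perp_{i=1}^m b_i$ into rank-$\varphi(n)$ rational forms $b_i(z)=\mathrm{Tr}_{K/\QQ}(\mu_i z z^\sigma)$, so that the square class of $\det L$ is $\prod_{i=1}^m \det b_i$.

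Next I would pin down the sign. Because $K$ is totally imaginary, every real place of $k$ becomes complex in $K$, so over $\RR$ each $b_i$ splits as an orthogonal sum of two-dimensional spaces $K_\tau\cong\CC$, each of which is definite. Thus the negative index of inertia of every $b_i$ is even and $\det b_i>0$; consequently $\det L>0$, and it suffices to determine $|\det L|$ modulo squares.

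For the magnitude I would show that the square class of $\det b_i$ does not depend on the twist $\mu_i$. Replacing $\mu_i$ by $c\mu_i$ with $c\in k^\times$ multiplies the Gram matrix of $b_i$ by the matrix of multiplication-by-$c$ on $K$, hence multiplies $\det b_i$ by $N_{K/\QQ}(c)=N_{k/\QQ}(c)^2$, a rational square. Therefore each $\det b_i$ has the same square class as the principal $c_n(x)$-form, for which $|\det L_0|=|c_n(1)c_n(-1)|$. Combining this with the positivity above yields \[\det L\equiv\bigl(c_n(1)\,c_n(-1)\bigr)^{m}\pmod{(\QQ^\times)^2}.\]

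It then remains to evaluate the integer $c_n(1)c_n(-1)$. Standard cyclotomic evaluations give $c_n(1)=p$ when $n=p^k$ and $c_n(1)=1$ otherwise, while $c_n(-1)$ contributes a factor $p$ exactly for $n=2p^k$, a factor $2$ for $n=2^k$ ($k\ge2$), and $1$ otherwise; hence $c_n(1)c_n(-1)$ equals $p$ for $n=p^k$ or $2p^k$ with $p$ odd, and is a perfect square ($4$ or $1$) for a $2$-power or for $n$ divisible by two distinct odd primes. Reducing the redundant case $n\equiv2\pmod4$ to $n/2$ (which does not change $K$ or $L$) gives precisely the asserted dichotomy. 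The main obstacle is the middle step: isolating the twist-independent part of the determinant and showing that the Hermitian discriminant washes out modulo squares, so that the answer depends only on $(n,m)$; the positivity coming from the CM property is what upgrades the computation of $|\det L|$ to the stated square class.
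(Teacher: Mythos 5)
The paper offers no proof of this statement --- it is quoted from Nebe's habilitation --- so there is nothing internal to compare against; your argument stands on its own and it is correct. The chain transferring the $\QQ$-bilinear form to a $K/k$-Hermitian form via the trace (using $\langle fx,y\rangle=\langle x,f^{-1}y\rangle$ and nondegeneracy of $\mathrm{Tr}_{K/\QQ}$), diagonalising over $k$, observing that replacing a diagonal entry $\mu_i$ by $c\mu_i$ with $c\in k^\times$ scales the determinant by $N_{K/\QQ}(c)=N_{k/\QQ}(c)^2$, and reading off positivity from the fact that every real place of $k$ is complex in $K$ (so each rank-$\varphi(n)$ block has even negative index) all check out. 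This correctly reduces the claim to $\det L\equiv\bigl(c_n(1)\,c_n(-1)\bigr)^m$ modulo $(\QQ^\times)^2$, which is what $|\det L_0|=|p(1)p(-1)|$ gives for the principal lattice.

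One caveat, which concerns the statement rather than your proof: your computation yields $\det L\in p^m(\QQ^\times)^2$ not only for $n=p^k$ but also for $n=2p^k$ with $p$ odd, since $c_{2p^k}(-1)=c_{p^k}(1)=p$ while $c_{2p^k}(1)=1$. The literal ``else'' branch of the theorem would place $n=2p^k$ in the square case, which is false --- compare Table \ref{tbl:possible_dets}, where $(n,\det T)=(6,3)$ and $(10,5)$. So the printed dichotomy needs its first branch to read ``$n=p^k$ or $2p^k$'' (equivalently, one must impose the usual convention $n\not\equiv 2\pmod 4$). Your closing sentence, asserting that folding $n\equiv 2\pmod 4$ into $n/2$ ``gives precisely the asserted dichotomy'', glosses over this: the reduction to $n/2$ is legitimate, since a $c_{2m}$-lattice is a $c_m$-lattice for $m$ odd via $f\mapsto -f$, but it lands these $n$ in the first branch, not the second. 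This is a defect of the statement as transcribed, not of your argument.
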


Let $X$ be a K3 surface and $g\in \Aut(X)$ of finite order $n$.
and note that since $g|\NS$ is of finite order $g|\NS$ preserves a chamber of the positive cone if and only if
\[\ker \left(g^{n-1}+g^{n-2} \cdots +1\right)|_{\NS}\]
is root free (\cite{mcmullen:minimum} calls such roots cyclic).
We turn this into a definition.
\begin{definition}
 Let $N$ be a hyperbolic or positive definite lattice and $g \in O(N)$ an isometry of finite order $n$.
 We call $g$ \emph{unobstructed} if
 \[\ker \left(g^{n-1}+g^{n-2} \cdots +1\right)\]
is root free.  Else we call $g$ \emph{obstructed}.
\end{definition}
In the following we set
\[C_i=\ker c_i(g|H^2(X,\ZZ)),\quad C_iC_j=\ker c_ic_j(g|H^2(X,\ZZ)).\]
Recall that for $n \in \NN$ we say that two lattices glue along $n$ if
$n$ is the order of the glue.
\begin{lemma}
For \boldmath$(38,19)$ \unboldmath 
there are two pairs $(X,g_1)$, $(X,g_2)$ up to isomorphism.
Set 
\[R_1=
\left(
\begin{matrix}
-2&-1\\
-1&-10
\end{matrix}
\right), \quad 
R_2 = 
\left(
\begin{matrix}
 -8& 2\\
 2& -10
\end{matrix}
\right).
\]
Then $\NS\cong U \oplus R_1$ and
$g_1|_{\NS}$ is given by the gluing of 
\[C_1 \cong U \oplus (-2) \mbox{ and } C_2 \cong (-38)\]
along $2$.
Further $g_2|{\NS}$ is given by the gluing of 
\[C_1 \cong (2) \oplus (-2) \mbox{ and } C_2 \cong R_2\]
along $2^2$.
\end{lemma}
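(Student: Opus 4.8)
The plan is to reduce the whole statement to the order-two action of $g$ on $\NS$ and then enumerate the possible eigenlattice decompositions. First I would record the invariants of $\NS$: since $H^2(X,\ZZ)$ is unimodular we have $\discgroup_{\NS}\cong \discgroup_{T}\cong\FF_{19}$, so $\NS$ has rank $4$, signature $(1,3)$ and $\det\NS=-19$; by Theorem~\ref{thm:orthogonal_group_surjective} (as $\rk\NS=4\ge 2+l(\discgroup_{\NS})$) its genus is a single class, represented by $U\oplus R1$. Because $\varphi(19)=\varphi(38)=18>4$, the only eigenvalues of $g|\NS$ are $\pm1$, and $g|\NS\neq\id$ (otherwise $g$ would act trivially on $\NS$ while $\varphi(38)=\rk T$, forcing $38\in\Sigma\cup\Omega$, which is false). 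Hence $g|\NS$ has order $2$ with characteristic polynomial $c_1^a c_2^b$, $a+b=4$, $a,b\ge 1$, so $(a,b)\in\{(3,1),(2,2),(1,3)\}$.

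Next I would analyse the eigenlattices $C1=\ker(g-1)$ and $C2=\ker(g+1)$. The involution $g^{19}$ acts by $-\id$ on $T$ and by $g|\NS$ on $\NS$, so its fixed lattice is exactly $C1=S(g^{19})$; being the fixed lattice of a non-symplectic involution, $C1$ is $2$-elementary, and the $g$-invariant orbit sum of an ample class shows $C1$ carries the positive direction, so $\mathrm{sig}\,C1=(1,a-1)$ and $C2$ is negative definite of signature $(0,b)$. Since $\zeta_{38}\equiv -1 \bmod (1-\zeta_{19})$, the isometry $g$ acts on $\discgroup_T\cong\FF_{19}$ by $-1$, while it acts trivially on $\discgroup_{C1}$; as the glue of $C1\oplus C2\hookrightarrow\NS$ is a $2$-group (Corollary~\ref{coro:resultant}, $\res(c_1,c_2)=2$), the factor $19$ must lie in $C2$, i.e. $19\mid\det C2$. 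Combining $2$-elementarity of $C1$, Lemma~\ref{lem:glue estimate} and the requirement $\discgroup_{\NS}\cong\FF_{19}$ then forces, for $(a,b)=(3,1)$, the data $C1\cong U\oplus(-2)$, $C2\cong(-38)$ glued along $2$ (a direct gluing computation as in Example~\ref{ex:glue} yields $R1$), and for $(a,b)=(2,2)$ the data $C1\cong(2)\oplus(-2)$, $C2\cong R2$ glued along $2^2$; in each case the glue map is unique up to $\{\pm\id\}$, so $(\NS,g|\NS)$ is determined.

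The main obstacle is excluding $(a,b)=(1,3)$. Here $2$-elementarity forces $C1\cong(2)$, whence $C2$ is a rank-three negative definite \emph{even} lattice of determinant $38$. By the Hermite bound its minimal norm is at most $\gamma_3\cdot 38^{1/3}=76^{1/3}<6$, hence $2$ or $4$; a short enumeration of reduced ternary forms shows that no even ternary lattice of determinant $38$ attains minimal norm $4$, so every candidate $C2$ contains a $(-2)$-vector. A root in $C2=\ker(g+1)|\NS$ means $g|\NS$ is obstructed and preserves no chamber of the positive cone, so it is not induced by an automorphism. Therefore $(1,3)$ does not occur, and this root count is the decisive step of the proof.

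Finally I would settle existence and non-isomorphism. In the two surviving cases $C2=(-38)$ and $C2=R2$ are root-free (a direct check of $-8x^2+4xy-10y^2=-2$ having no integral solution handles $R2$), so $g|\NS$ is unobstructed; gluing $g|_T$, which has simple glue by Lemma~\ref{lem:small_extensions}, to $g|\NS$ gives a Hodge isometry of $H^2$ preserving an ample cone, and the strong Torelli theorem together with surjectivity of the period map produces the automorphism on the single surface $X$ determined by $(38,19)$ via Theorem~\ref{thm:main}. The two pairs are non-isomorphic because the fixed lattices $C1=S(g_i^{19})$ have different ranks, $3$ versus $2$. I expect the delicate bookkeeping to be the determinant-and-glue matching via the oddity formula and the uniqueness of the glue map in the $(2,2)$ case, while the genuinely decisive point is the root argument eliminating $(1,3)$.
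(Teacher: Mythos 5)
Your overall strategy is the same as the paper's: enumerate $\chi_{g|\NS}=(x-1)^{r}(x+1)^{4-r}$, use $2$-elementarity of $C1=S(g^{19})$ together with the glue/resultant constraints to bound $\det C1$ and force $19\mid\det C2$, kill candidates by roots, and check uniqueness of the glue. The additions you make — the justification that $r\neq 0,4$, the Hermite-bound argument showing every even negative definite ternary lattice of determinant $38$ contains a root (the paper handles $r=1$ by simply exhibiting the unique such lattice as $(-2)\oplus R1$, which visibly has a root), the Torelli/existence paragraph, and distinguishing the two pairs by the rank of the fixed lattice — are all sound and consistent with the paper's framework.

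The one genuine gap is in the case $(a,b)=(2,2)$. You claim that $2$-elementarity of $C1$, Lemma~\ref{lem:glue estimate} and $\discgroup_{\NS}\cong\FF_{19}$ \emph{force} $C1\cong(2)\oplus(-2)$ and $C2\cong R2$, but those constraints alone do not: they leave the candidates $C2\cong R1$ (with $C1\cong U$ and trivial glue), $(-2)\oplus(-38)$, $R1(2)$ and $R2$. The first two are eliminated because they contain roots, but $R1(2)$ is root-free and has the admissible determinant $2^2\cdot 19$, so it survives every filter you list. It can only be excluded by comparing $19$-adic discriminant forms: the square class of $\det(q_{R1(2)})_{19}$ differs from the required one by the factor $\left(\frac{2}{19}\right)=-1$, so no glue map to $\discgroup_T$ exists. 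This Legendre-symbol computation is exactly the step the paper supplies ("$R1(2)$ has wrong $19$ glue"), and without it your case analysis does not close. Everything else matches the paper's proof.
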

\begin{proof}
There are $3$ cases 
\[\chi_{g|\NS}=(x-1)^r(x+1)^{4-r}, \quad (r=1,2,3).\]
Note that $C_{1}$ is $2$-elementary and $\det C_{1} \mid 2^m$ where $m=min\{r,4-r\}$ (Thm \ref{thm:glue_structure}).
\begin{enumerate}
\item[$r=1$:] Here $C_1=(2)$ and $C_2=(-2)\oplus R_1$ which is the unique even, negative definite lattice of determinant $-38$.
\item[$r=2$:] We see $\det C_2 \mid 2^219$ and there are 4 such lattices -
$R_1$,$(-2)\oplus (-38)$, $R_1(2)$ and $R_2$. The first two lattices have roots and $R_1(2)$ has wrong $19$ glue, since the Legendre symbol $\left(\frac{2}{19}\right)=-1$. We are left with $R_2$. The glue is easily seen to be unique.
\item[$r=3$:] We have the single choice $C_2=(-38)$ and $C_1=U \oplus (-2)$. Indeed the gluing exists and is unique. 
\end{enumerate}
\end{proof}

\begin{lemma}
\boldmath $(34,17)$ \unboldmath 
There are two pairs $(X,g_1)$ and $(X,g_2)$ for $(34,17)$.
The action of $g_1|_{\NS}$ is given by the gluing of
\[C_{1}=U \oplus (-2) \oplus (-2) \quad \mbox{ and } \quad C_2=-\left(
\begin{matrix}
6 & 2\\
2 & 12
\end{matrix}
\right).
\]
along $2^2$.

The action of $g_2|_{\NS}$ is given by the gluing of 
\[C_{1}=(2)\oplus (-2) \oplus (-2) \quad \mbox{ and }\quad C_2= 
-2\left(
\begin{matrix}
2 & 1 & 1 \\
1 & 3 & 1 \\
1 & 1 & 4
\end{matrix}\right)\]
along $2^3$.

\end{lemma}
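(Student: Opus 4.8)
The plan is to reduce the statement to a classification of involutions of the (unique) Néron--Severi lattice. First I would record the numerology: since $\varphi(34)=16$ we have $\rk\NS=22-16=6$, and $\det T=17$ (Table~\ref{tbl:real_dets2}) gives $D_\NS\cong D_T\cong\FF_{17}$. As $\rk\NS=6\ge 2+\ell(D_\NS)$ and $\NS$ is indefinite, Theorem~\ref{thm:orthogonal_group_surjective} shows $\NS$ is unique in its genus and $O(\NS)\to O(q_\NS)$ is onto; hence by Theorem~\ref{thm:main} the surface $X$ is unique, so both pairs live on one and the same $X$ and the problem becomes: classify the admissible isometries $g|_\NS$ up to conjugacy.

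Next I would pin down the shape of $g|_\NS$. The only cyclotomic factors $c_d$ with $d\mid 34$ and $\deg c_d\le 6$ are $c_1=x-1$ and $c_2=x+1$, so $\chi_{g|_\NS}=(x-1)^a(x+1)^b$ with $a+b=6$; thus $g|_\NS$ is an involution. We have $a\ge 1$ by Corollary~\ref{coro:n_restrictions}, and $b\ge1$ since $g|_\NS=\id$ would force $(x-1)\in I$, hence (as $34$ has two prime factors, Lemma~\ref{lem:zeta}) $I=\mathcal{O}_K$ and $T$ unimodular, contradicting $\det T=17$. Writing $C1=\ker(g-1)$, $C2=\ker(g+1)$, one checks that $g^{17}$ acts as $+1$ on $C1$ and as $-1$ on $C2\oplus T$, so $C1=S(g^{17})$ is the fixed lattice of a non-symplectic involution, hence $2$-elementary of signature $(1,a-1)$, while $C2$ is negative definite of rank $b$. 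By Theorem~\ref{thm:glue_structure} the glue $G=\NS/(C1\oplus C2)$ is $2$-elementary ($d=2$ there); matching the $17$-part of $D_\NS$ forces $17\,\|\,\det C2$, since it cannot sit in the $2$-elementary $C1$, while $\ell(D_{C1})=\ell((D_{C2})_2)=:\ell\le\min(a,b)$ and $\det C1=\pm 2^{\ell}$. Finally, for $g$ to come from a genuine finite-order automorphism it must preserve an ample chamber, which by the criterion preceding the lemma holds iff $C2=\ker(\sum_i g^i)|_\NS$ is root-free.

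Then I would run the case analysis over $(a,b)\in\{(1,5),(2,4),(3,3),(4,2),(5,1)\}$, exactly as in the $(38,19)$ lemma. For each $(a,b)$ the lattice $C1$ ranges over the finitely many $2$-elementary lattices of signature $(1,a-1)$ with $\det\mid 2^{\min(a,b)}$, and $C2$ over the negative-definite even lattices of rank $b$ with $17\,\|\,\det$ and $2$-part matching that of $C1$; I would discard every candidate whose $C2$ carries a root, together with those whose $17$-adic glue has the wrong square class (via $\left(\frac{\cdot}{17}\right)$, as in the $R(2)$ step for $(38,19)$), and verify that the remaining glue map is unique up to $O(C1)\times O(C2)$. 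The two survivors are $(a,b)=(4,2)$ with $C1=U\oplus 2(-2)$, $C2=-\left(\begin{smallmatrix}6&2\\2&12\end{smallmatrix}\right)$, and $(a,b)=(3,3)$ with $C1=(2)\oplus 2(-2)$, $C2=-2\left(\begin{smallmatrix}2&1&1\\1&3&1\\1&1&4\end{smallmatrix}\right)$; these match the fixed lattices $S(g_1^{17})=U\oplus 2(-2)$ and $S(g_2^{17})=(2)\oplus 2(-2)$ computed geometrically, and existence is furnished by the equations in Table~\ref{tbl:real_dets2} together with Torelli.

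The main obstacle is the negative-definite bookkeeping in the excluded cases $(1,5),(2,4),(5,1)$: one must show that every admissible $C2$ there either carries a root (so $g|_\NS$ is obstructed and no finite-order automorphism realizes it) or fails the discriminant-form matching with $q_T$, and dually that in the two surviving ranks the decomposition $C1\oplus C2$ and the gluing are unique up to isometry. Enumerating short even lattices of prescribed determinant and certifying root-freeness (via reduction theory) is the computational heart of the argument; the conceptual input---uniqueness of $\NS$ and hence of $X$, the $2$-elementarity of $C1$, and the root-free ample-chamber criterion---is already in place.
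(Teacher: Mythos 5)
Your proposal matches the paper's argument essentially step for step: the paper likewise reduces to classifying involutions $g|_{\NS}$ with $\chi=(x-1)^r(x+1)^{6-r}$ on the unique $\NS\in II_{(1,5)}(17^{-1})$, bounds $\det C2$ by $2^{\min\{r,6-r\}}17$ with $C1$ $2$-elementary, kills the cases $r=1,2,5$ via the oddity formula, the Nipp/Brandt--Intrau tables of definite forms (all candidates have roots), and wrong $17$-glue, and confirms uniqueness of the two surviving gluings by surjectivity of $O(C1)\to O(q_{C1})$. The only difference is that you defer the definite-lattice enumeration to ``tables/reduction theory'' where the paper actually cites and works through the specific tables, but the structure and the conclusions are the same.
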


\begin{proof}
Here we need a little more work.
Note that $\NS$ is in the genus  $\upperRomannumeral{2}_{(1,5)}(17^{-1})$.
There are the $5$ cases
\[\chi_{g|\NS}=(x-1)^r(x+1)^{6-r}, \qquad r \in \{1,\dots 5\}.\]
In any case $17 \mid \det C_2 \mid 2^m 17$ where $m=\min\{r,6-r\}$.
The $17$ part of the genus symbol of $C_2$ is $17^{-1}$, and moreover $2 (D_{C_2})_2=0$. Then the genus symbol of $C_2$ is $\upperRomannumeral{2}_{(0,6-r)}(2^v_*17^{-1})$ where the asterisk is an unknown entry.
\begin{enumerate}
\setlength{\itemsep}{10pt}
\item[$r=1$:] Then $C_{1}=(2)$ and $\det C_2=-34$. Hence in order to glue above $2$, $C_2$ must belong to $\upperRomannumeral{2}_{(0,5)}(2_7 17^{-1})$ or
$\upperRomannumeral{2}_{(0,5)}(2_3^{-1} 17^{-1})$, but both genera are empty as they contradict the oddity formula \cite[Chapter 15, (16) ]{conway_sloane:sphere_packings}.
\item[$r=2$:] Here $C_2$ is even of signature $(0,4)$ and determinant $d=-17,-2\cdot 17, -4 \cdot 17$.
Looking at the tables in \cite{nipp:table_quaternary_forms}, we see that there
are $1,0,7$ such forms, and all of them contain roots.
\item[$r=3$:]
From the tables in \cite{brandt-intrau:table_ternary_forms}, we extract the following.
If $v=0,\pm 2$ the respective genera are empty.
If $v=1$, there is a single genus, namely $\upperRomannumeral{2}_{(0,3)}(2_1^1 17^{-1})$
containing two classes - both have maximum $-2$.
However, for $v=3$ there are $9$ negative definite ternary forms of determiant $2^3 17$. Only a single one of them 
has the right 2-genus symbol and no roots. It is given by
\[C_2=
-2\left(
\begin{matrix}
2 & 1 & 1 \\
1 & 3 & 1 \\
1 & 1 & 4
\end{matrix}\right).
\]
Indeed, here $C_{1}=(2) \oplus (-2) \oplus (-2)$ works just fine,
and as $|O(q_{C_{1}})|=2$ it is evident that the gluing is unique as well. 
\item[$r=4$:] Here $\det C_2 \mid 2^2 17$, and  

we get the possibilities 
\[
-\left(
\begin{matrix}
2 & 0\\
0 & 34
\end{matrix}
\right),
-\left(
\begin{matrix}
4 & 2\\
2 & 18
\end{matrix}
\right),
-\left(
\begin{matrix}
6 & 2\\
2 & 12
\end{matrix}
\right).
\]
The first two have wrong $17$ glue.
We are left with the third one. 
It has \[(q_R)_2\cong (1/2) \oplus (1/2).\] 
Then there is the single possibility $C_{1}\cong U\oplus (-2)\oplus (-2)$. Surjectivity of 
$O(C_{1})\rightarrow O(q_{C_{1}})$, hence uniqueness of the extension is provided by Theorem \ref{thm:orthogonal_group_surjective}.
\item[$r=5$:]
Here $C_2=(-34)\in \upperRomannumeral{2}_{(0,1)}(2_7 17)$ has wrong $17$-glue.
\end{enumerate}
\end{proof}

\textbf{The holomorphic Lefschetz formula.} 
For the next lemma we use the holomorphic (see \cite[p.542]{atiyah:index2} and \cite[p.567]{atiyah:index3}) and topological Lefschetz formula.
We give a short account. See \cite{taki:3power} for a similar application.\\

Recall that $g$ is a purely non-symplectic automorphism of the K3 surface $X$ with $g^*\omega=\zeta_n\omega$, where $0 \neq \omega \in H^0(X,\Omega_X^2)$. Let $x$ be a fixed point of $g$. 
Then the local action of $g$ at $x$ can be linearized and diagonalized (in the holomorphic category). 
\begin{definition}
A fixed point is said to be of type $(i,j)$ if the action is locally of the form
\[
\left(
\begin{matrix}
\zeta_n^i & 0\\
0 & \zeta_n^j
\end{matrix}
\right).
\]
\end{definition}

This implies that the fixed point set $X^g$ is the disjoint union of isolated fixed points and smooth curves $C_1, \dots, C_N$.
Set 
\[a_{ij}=\frac{1}{(1-\zeta_n^i)(1-\zeta_n^j)} \quad \mbox{ and } \quad b(g)=\frac{(1+\zeta_n)(1-g)}{(1-\zeta_n)^2}.\]
Denote by $m_{i,j}$ the number of isolated fixed points of type $(i,j)$,
and set $g_l=g(C_l)$ the genus of the fixed curve $C_l$.
The topological Lefschetz formula is
\[e(X^g)=\sum_{i=0}^4 (-1)^iTr(g^*|H^i(X,\ZZ))\]
which in our setting amounts to
\[M+\sum_{l=1}^N \left( 2-2g(C_l)\right)=2 + Tr (g^*|T)+ Tr (g^*|\NS)\]
where $M=\sum_{\substack{i+j=n+1\\1<i\leq j< n}} m_{i,j}$ is the number of isolated fixed points.
The holomorphic Lefschetz formula is
\[1+\overline{\zeta_n}=\sum_{i=0}^2 (-1)^i Tr(g^*|H^i(X,\O_X))=\sum_{\substack{i+j=n+1\\1<i\leq j< n}} a_{ij}m_{ij} + \sum_{l=1}^N b(g_l).\]
\begin{lemma}
For \boldmath$(26,13)$ \unboldmath there are three pairs $(X,f_1)$, $(X,f_2)$ and $(X,f_3)$.
The action of $g_1|_{\NS}$ is given by
the gluing of
\[C_{1}=U \oplus D_4 \oplus A_1 \quad \mbox{and} \quad
C_2=
-\left(
\begin{matrix}
4&  2&  2\\
2&  4&  2\\
2&  2& 10
\end{matrix}
\right)\]
along $2^3$.
The action of $g_2|_{\NS}$ is given by
the gluing of
\[C_{1} \cong (2) \oplus E_8 \mbox{ and } C_2 \cong (-26)\]
along $2$.

\textcolor{red}{
The action of $g_3|_{\NS}$ is given by the glueing of
\[C_1 = U(2) \oplus D_4 \text{ and } C_2 = -\begin{pmatrix}
4 & 2 & 2 & 2 \\
2 & 4 & 2 & 2 \\
2 & 2 & 4 & 2 \\
2 & 2 & 2 & 8
\end{pmatrix}\]
along $2^4$.}
\end{lemma}
\begin{proof}
We already know the uniqueness of $(X,g^2)$. One can check that $g^2$ has $9$ isolated fixed points and a (pointwise) fixed curve of genus $0$.
By \cite[8.4]{artebani_sarti_taki:non-symplectic} their local types are given by
\[m_{2,12}=3,m_{3,11}=3,m_{4,10}=2,m_{5,9}=1.\]
Since $X^g\subseteq X^{g^2}$, either $g$ fixes a curve of genus $0$ and at most $9$ isolated points, or $g$ does not fix a curve and at most $11$ points.

A calculation of the holomorphic Lefschetz formula yields the following possibilities:
$X^g$ fixes a curve of genus zero and $7$ or $9$ points, or
$X^g$ fixes $4,5,6$ or $7$ points and no curve.
In any case the fixed locus has Euler characteristic $4 \leq e(X^g) \leq 11$.
Write
\[\chi_{g|\NS}=(x-1)^r(x+1)^{10-r}\]
for the characteristic polynomial of the action of $g$ on $\NS$.
Then the topological Lefschetz formula reads
\[e(X^g)=2+Tr(g^*|T)+ Tr(g^*|\NS)=2+1+r-(10-r)=2r-7,\]
and consequently $6\leq r \leq 9$
We view $\NS$ as a primitive extension of $C_{1}\oplus C_2$. 
Since $\res(c_1,c_{26})=1$, we see that $13 \mid \det C_2$. Further, $C_{1}$ is $2$-elementary.
We conclude that $|\!\det C_2|=2^k 13$ where $k\leq \min \{r,10-r\}$.

\begin{enumerate}
\item[$r=6$:]
Looking at the tables in \cite{nipp:table_quaternary_forms}, we see that
for $k=0,1,2,3$ all even forms of signature $(0,4)$ and determinant $2^k13$ have roots.
\textcolor{red}{For $k=4$ there are three genera of determinant $2^4 \cdot 13$. They give a total of $5$ even forms.
Exactly one of them has no roots. It results in $g_3$.}
\item[$r=7$:]
Here we use the tables of \cite{brandt-intrau:table_ternary_forms} to list even forms of 
signature $(0,3)$ and determinant $2^k13$.
\begin{itemize}	
\item For $k=0$ there is no lattice of this determinant.
\item For $k=1$ there is a single class, but it is obstructed.
\item For $k=2$ there are two genera of this determinant, but their $2$ discriminant group is isomorphic to $\ZZ/4\ZZ$.
\item For $k=3$ there is a single genus with right $2$ discriminant and $13$ glue.
It is 
\[\upperRomannumeral{2}_{(0,3)}(2^{-3}_1 13^{-1})\]
and consists of the two classes 
\[
-\left(
\begin{matrix}
2&  0&  0\\
0&  2&  0\\
0&  0& 26
\end{matrix}
\right), \quad \mbox{ and } \quad
-\left(
\begin{matrix}
4&  2&  2\\
2&  4&  2\\
2&  2& 10
\end{matrix}
\right) \cong C_2
\]
one of which, $C_2$, has no roots. 
Then \[C_1\cong U \oplus D_4 \oplus A_1.\]
\end{itemize}
\item[$r=8$:] There are no negative definite lattices of rank $2$ and determinant $13$ or $26$. 
But there are two of determinant $2^2 \cdot 13$:
\[
\left(
\begin{matrix}
-2&  0\\
0&  -26
\end{matrix}
\right),
\quad \mbox{ and } \quad
\left(
\begin{matrix}
-4&  2 \\
2&  -14  
\end{matrix}
\right).
\]
The first one is obstructed while the second one has wrong $19$-glue.
\item[$r=9$:] Here we can take $C_2 \cong (-26)$ and $C_{1} \cong (2) \oplus E_8$. The lattices glue.

We have to check uniqueness of the gluings. This is provided by the surjectivity of
\[O(C_{1})\rightarrow O(q_{C_{1}})\]
which follows in each case from \cite[1.14.2]{nikulin:quadratic_forms}.\\
\end{enumerate}
\end{proof}

\begin{lemma}\label{lem:3^6 2^3}
For \boldmath$(36,2^63^2)$\unboldmath, the characteristic polynomial is
\[\chi_g=c_{36}c_{18}c_{4}c_2c_1\]
and the gluings are given by the following diagram. Edges correspond to glue maps between the respective sublattices and they are decorated with the order of the glue.
\begin{center}  
	\tikzstyle{block} = [draw, rectangle, minimum height=3em, minimum width=3em]
	\tikzstyle{virtual} = [coordinate]
	
	\begin{tikzpicture}[auto, node distance=2cm]
	\node [block]                 (C36)     {$C_{36}$};
	\node [virtual, right of=C36](up){};
	\node [block, right of=up]   (C4)     {$C_4$};
	\node [block, below of=C36] (C18)    {$C_{18}$};
	\node [block, below of=C4] (C2)  {$C_2$};
	\node [block, right of=C4] (C1)  {$C_{1}$};
	\draw [-] (C36) -- node {$2^6$} (C18);
	\draw [-] (C36) -- node {$3^2$}(C4);
	\draw [-] (C4)  -- node {$2$}(C2);
	\draw [-] (C4)  -- node {$2$}(C1);
	\draw [-] (C1)  -- node {$2$}(C2);
	\draw [-] (C18)  -- node {$3$}(C2);
	\end{tikzpicture}
\end{center}
This determines $(\NS,g|\NS)$ uniquely up to isomorphism.
\end{lemma}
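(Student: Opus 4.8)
The strategy is to fix first the rational invariant, namely the characteristic polynomial $\chi_g$ of $g$ on $H^2(X,\ZZ)$, and then the integral data, i.e.\ the pieces $Ci=\ker c_i(g)$ and the glue between them. Since $\varphi(36)=12$ and $|\det\NS|=2^63^2$, the transcendental lattice is $T=C36$, a simple $c_{36}(x)$-lattice of rank $12$, and $\mu\defeq\chi_{g|\NS}$ has degree $22-12=10$. By Corollary~\ref{coro:n_restrictions} we have $(x-1)\mid\mu\mid\prod_{k<36}c_k$ and $2^63^2=\det T\mid\res(c_{36},\mu)$. Computing the relevant cyclotomic resultants with Theorem~\ref{thm:cyclotomic_resultants} shows that, among the divisors $k\mid 36$, only $c_4,c_{12}$ contribute a power of $3$ (namely $3^2,3^4$) and only $c_9,c_{18}$ contribute a power of $2$ (namely $2^6$ each). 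Together with $\deg\mu=10$ and $(x-1)\mid\mu$ this leaves exactly the two candidates
\[\mu\in\{\,c_1c_2c_4c_9,\ c_1c_2c_4c_{18}\,\}.\]

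The heart of the argument is to exclude $c_9$. This cannot be decided by the lattices alone: since $\QQ(\zeta_9)=\QQ(\zeta_{18})$ and $c_{18}(x)=c_9(-x)$, the rank-$6$ factor is the same lattice in both cases and they differ only by the sign of the $g$-action; moreover $c_9\equiv c_{18}\bmod 2$, so the $2$-adic glue to $C36$ is identical, and even the geometric input that $g^{18}$ fixes a smooth genus-$3$ curve---forcing $S(g^{18})=\ker(g^{18}-\id)$ to be $2$-elementary of rank $8$ and determinant $2^8$---is consistent with either choice. The invariant that separates them is the order-four automorphism $g^9$ (acting on $\omega$ by $\zeta_{36}^9=i$): an eigenvalue count gives $\rk\ker(g^9-\id)=7$ for $c_9$ and $\rk\ker(g^9-\id)=1$ for $c_{18}$. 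I would evaluate this rank on the unique surface of Table~\ref{tbl:real_dets2} through the topological and holomorphic Lefschetz formulas for $g^9$, using the known fixed locus of $g^{18}$ and that $g^9$ restricts to it as an involution; this forces $\rk\ker(g^9-\id)=1$ and hence $\chi_g=c_{36}c_{18}c_4c_2c_1$. I expect this to be the main obstacle, being the one point where the lattice bookkeeping must be supplemented by genuine fixed-point geometry.

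Granting $\chi_g$, each $Ci$ is a twist of the principal $c_i(x)$-lattice (Corollary~\ref{coro:n_restrictions}(3)), hence determined up to isomorphism by its determinant and sign invariant (Proposition~\ref{prop:p-lattice_isomorphism_class}). By Corollary~\ref{coro:resultant} two pieces can glue only along primes dividing their pairwise resultant, and evaluating these with Theorem~\ref{thm:cyclotomic_resultants} reproduces exactly the edges of the diagram: $C36$--$C18$ over $2$, $C36$--$C4$ over $3$, the triangle $C4$--$C2$, $C4$--$C1$, $C1$--$C2$ over $2$, and $C18$--$C2$ over $3$. Unimodularity of $H^2$ then fixes the local sizes: the full $2$-part $\FF_{64}$ of $D_{C36}$ is absorbed on the $C36$--$C18$ edge and the full $3$-part $\FF_9$ on the $C36$--$C4$ edge, giving $|\det C18|=2^6\cdot 3$ and $|\det C4|=2^23^2$ with $D_{C4}\cong\ZZ[i]/(6)$; the rank-one even lattices $C1,C2$ then have $|\det C1|=2$ and $|\det C2|=6$, the $3$-part of $C2$ absorbing the $C18$--$C2$ edge while the $2$-parts close the triangle. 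These determinants pin down every $Ci$ uniquely.

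It remains to see that the assembled extension is unique. At each prime the glue group is a small $\FF_p$-vector space on which $g$ acts with a separable characteristic polynomial having $S(1)S(-1)\neq 0$ (it is $c_9\equiv c_{18}\bmod 2$ over $\FF_2$ and $x^2+1$ over $\FF_3$), so Theorem~\ref{thm:glue_map_finite_fields} produces a $g$-equivariant glue and, by Lemma~\ref{lem:small_extensions}, the glue of each $Ci$ is simple; the double-coset description of primitive extensions of $g$-lattices then collapses to a single class once one quotients by $\Aut(Ci,g|Ci)$ and $\Aut(q_{Ci},\overline g)$. Piecing the $\phi_p$ together shows $(\NS,g|_{\NS})$ is determined up to isomorphism, as claimed.
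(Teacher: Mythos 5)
Your reduction to the two candidates $c_1c_2c_4c_9$ and $c_1c_2c_4c_{18}$ is essentially right (though you skip the step that forces the factor $c_2$ rather than $c_1^2$: the paper gets it from $2^2\mid\det C4$, which requires \emph{two} rank-one glue partners over $2$). The genuine gap is in how you then exclude $c_9$. Your assertion that this ``cannot be decided by the lattices alone'' is incorrect, and it is exactly the point where the paper's proof does its work: the determinants force $C18\cong E_6(2)$ of determinant $2^6\cdot 3$ and $C2\cong(-12)$, so each has a $3$-part $\ZZ/3\ZZ$ in its discriminant group that must be glued to the other (the edge labelled $3$ in the diagram). On $(D_{C2})_3$ the induced isometry $\overline{g}$ acts as $-1$, while on the $3$-part of the discriminant of $\ker c_9(g)$ resp.\ $\ker c_{18}(g)$ it acts as $\zeta_9\equiv +1$ resp.\ $\zeta_{18}\equiv -1$ modulo the prime above $3$; an equivariant glue map $\ZZ/3\ZZ\to\ZZ/3\ZZ$ therefore exists only in the $c_{18}$ case. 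You overlooked this edge as the discriminating invariant, and your substitute --- computing $\rk\ker(g^9-\id)$ via Lefschetz formulas --- is only sketched: you propose to evaluate it ``on the unique surface of Table \ref{tbl:real_dets2}'', which presupposes the uniqueness being proved, and you never actually carry out the holomorphic Lefschetz computation that would pin down the fixed locus of $g^9$ for an abstract pair.

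There is also a concrete numerical error in your gluing data: you take $|\det C1|=2$ and $|\det C2|=6$. With those values $D_{C1}\cong\ZZ/2\ZZ$ cannot carry the two independent order-$2$ glues to $C4$ and to $C2$ demanded by the triangle, and the total determinant comes out as $2^4 3^2$ instead of $2^6 3^2$. The correct lattices are $C1\cong(4)$ and $C2\cong(-12)$ (the alternatives $(-2)$, $(-6)$ and $(12)$ being excluded because they are obstructed or have the wrong $3$-glue), and it is precisely the $3$-part of $(-12)$ that is used above to kill $c_9$. Finally, in the uniqueness step Lemma \ref{lem:small_extensions} concerns the glue of the transcendental lattice $C36$, not of ``each $Ci$'', and Theorem \ref{thm:glue_map_finite_fields} does not apply to the glues involving $C1$ and $C2$ (there $S(1)S(-1)=0$); the paper instead observes directly that the only non-trivial freedom lies in the $C1C2$--$C4$ glue and is absorbed by the action of $\overline{g|_{C4}}$.
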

\begin{proof}
The possible contributors to the resultant are $c_9,c_{18},c_4$ and $c_{12}$.
First the $2^6$ contribution is coming from either $c_9$ or $c_{18}$ dividing $\chi(g|\NS)$.
Then there is no room for $c_{12}$ left. Thus the $3^2$ contribution is coming from $c_4$.
This leaves us with 
\[\chi_g=c_{36}c_{18}c_{4}(x\pm 1) (x-1) \quad \mbox{or}\quad c_{36}c_{9}c_{4}(x\pm 1) (x-1).\]
Since the principal $c_4(x)$-lattice has determinant $2^2$, we have to glue it over $2^2$. This determines the characteristic polynomial to be $c_{36}c_{18}c_4c_2c_1$ or $c_{36}c_{9}c_4c_2c_1$.
At this point we know $C_{36}$ ,$C4\cong (-6)\oplus(-6)$, $C_{18}\cong E_6(2)$ and their gluings which exist by \cite[Theorem 3.1]{mcmullen:entropy_and_glue}.
Then 
\[\left(q_{C_{1}C_2}\right)_3 \cong (q_{E6(2)})_3(-1)\cong(2/3).\]
The case $C_{1}C_2=C_{1}\oplus C_2$ leads to $C_2=(-2)$ which is obstructed or $C_2=(-6)$ which has the wrong $3$-glue. Thus we have to glue. 
Then $C_{1}\cong (4)$ and $C_2 \cong (-12)$ as $C_{1}\cong (12)$ has wrong $3$-glue.
This gluing is unique since $(\ZZ/4\ZZ)^\times=\{\pm 1 \}$.
Since $(D_{C_2})_3$ can be glued to $C_{18}$ but not to $C_{9}$, we have 
\[\chi_g=c_{36}c_{18}c_4c_2c_1.\]
The only step at which we have non-trivial freedom in the choice of glue is between $C_{1}C_2$ and $C_4$. This freedom is due to the action of $g|D_{C_4}$. Thus is does not affect the isomorphism class of $(C_{1}C_2C_4,g_1 \oplus g_2\oplus g_4)$ and uniqueness of $(\NS,g)$ up to isomorphism follows. 
\end{proof}

The proof of the following Lemmas is similar to what we have already seen. We refer the reader to the authors thesis \cite{brandhorst:thesis} for the computational details.
\begin{lemma}
For \boldmath$(36,3^4)$ \unboldmath the action of $g|_{\NS}$ is uniquely determined by the following gluing diagram.
\begin{center}  
	\tikzstyle{block} = [draw, rectangle, minimum height=3em, minimum width=3em]
	\tikzstyle{virtual} = [coordinate]
	
	\begin{tikzpicture}[auto, node distance=2cm]
	\node [block]                 (C36)     {$C_{36}$};
	\node [block, right of=C36]   (C12)     {$C_{12}$};
	\node [block, right of=C12] (C3)    {$C_3$};
	\node [block, right of=C3] (C1)  {$C1\cong U\oplus A_2$};
	\draw [-] (C36) -- node {$3^4$} (C12);
	\draw [-] (C12) -- node {$2^2$}(C3);
	\draw [-] (C3)  -- node {$3$}(C1);
	\end{tikzpicture},
\end{center}
\end{lemma}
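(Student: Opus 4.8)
The plan is to determine the lattice $\NS$ and the reduction of $g$ modulo $3$ first, read off $\chi_{g|\NS}$ from this, and then reconstruct $(\NS,g|_\NS)$ from its cyclotomic eigenpieces $Ck=\ker c_k(g|H^2(X,\ZZ))$ along the chain forced by the resultants. For $(n,d)=(36,3^4)$ the lattice $\NS$ is already pinned down: by Lemma \ref{lem:unique_table} it is the unique lattice in its genus, and the fixed-point computation carried out earlier identifies it as $\NS\cong U\oplus 4A_2$, so $\discgroup_\NS\cong\FF_3^4$ is $3$-elementary with $\det\NS=-3^4$, and $S(g^{12})=\NS$, i.e.\ $g^{12}|_\NS=\id$.

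Write $K=\QQ(\zeta_{36})$ and let $\mathfrak P=(1-\zeta_9)$ be the prime of $\mathcal O_K$ over $3$, of norm $9$. The compatible isomorphism $\discgroup_\NS\cong\discgroup_T\cong\mathcal O_K/\mathfrak P^2$ carries $\overline g$ to multiplication by $\zeta_{36}$. Since $c_{12}\equiv c_4^2$ and $c_3\equiv c_1^2 \pmod 3$, and since $c_4(\zeta_{36})$ generates $\mathfrak P$ (its norm is $\res(c_{36},c_4)=3^2=N\mathfrak P$ by Theorem \ref{thm:cyclotomic_resultants}), the operator $c_4(\overline g)$ is nonzero but squares to zero on $\mathcal O_K/\mathfrak P^2$; hence the minimal polynomial of $\overline g$ over $\FF_3$ is $c_4^2=c_{12}\bmod 3$. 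As $c_4=x^2+1$ is irreducible mod $3$ and the only cyclotomic factors reducing to a power of it are $c_4\mapsto c_4$ and $c_{12}\mapsto c_4^2$, the relation $\mu_{g|\NS}(\overline g)=0$ forces $c_{12}\mid\chi_{g|\NS}$. Because $g^{12}|_\NS=\id$, every other eigenvalue has order dividing $12$, so the remaining degree-$6$ factor is a product of $c_1,c_2,c_3,c_4,c_6$. (Even without the fixed-point input one excludes $c_9,c_{18}$: paired with the mandatory $c_{12}$ they exhaust the rank, giving $\chi_{g|\NS}=c_{12}c_9$ or $c_{12}c_{18}$; but $C9,C18$ meet $C36$ only over $2$ while $\discgroup_T$ is $3$-torsion, and they do not glue to $C12$ as $\res(c_{12},c_9)=1$, so they would be unimodular summands of $\NS$ — impossible, since every $c_9$- or $c_{18}$-lattice has determinant divisible by $3$.)

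Next I would fix the small factors. Since $g$ preserves the ample cone, $\NS^g=C1\neq0$, so $c_1\mid\chi_{g|\NS}$, and $C1$ carries the unique positive direction while all pieces $Ck$ with $k\neq1$ are negative definite. The $\FF_3^4$ in $\discgroup_\NS$ can only arise from $C12$ glued to $C36=T$, so the $3$-parts of the other pieces must cancel internally. Here the \emph{sign of the $3$-adic discriminant form} does the work: $C1$ contains a negative-definite $A_2$ whose $3$-form has a fixed class in $\FF_3^\times/(\FF_3^\times)^2$, and an isotropic glue over $3$ exists only against a piece of the opposite class. Since scaling by $2$ multiplies the discriminant form by the nonsquare $2\equiv-1\pmod 3$, this singles out $C3\cong A_2(2)$; the twist simultaneously introduces the $\FF_2^2$ that must then glue to $C12$. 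A parallel $2$-adic check (using that $\NS$ is $2$-adically unimodular) together with root-freeness of the definite pieces eliminates every configuration involving $c_2,c_4,c_6$, leaving $\chi_{g|\NS}=c_{12}c_3c_1^4$ with $C1$ the unique even lattice of signature $(1,3)$ and $\discgroup=\FF_3$, namely $U\oplus A_2$.

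Finally I would assemble the gluing. By Theorem \ref{thm:cyclotomic_resultants} the only nonzero resultants among the pieces are $\res(c_{36},c_{12})=3^4$, $\res(c_{12},c_3)=2^2$ and $\res(c_3,c_1)=3$, so the glue graph is exactly the chain $C36\!-\!C12\!-\!C3\!-\!C1$ with the labelled groups, and the determinant identity $3^4\cdot(3^42^2)\cdot(2^23)\cdot3=(3^52^2)^2$ confirms that the overlattice is unimodular as it must be. Uniqueness of $(\NS,g|_\NS)$ then follows piece by piece: $(T,g|_T)$ has simple glue by Lemma \ref{lem:small_extensions}, and each of the three glue maps is unique once one checks that the relevant $\overline g$-equivariant isometries of the discriminant forms lift to isometries of the lattices — surjectivity of $O\to O(q)$ for $C3$ and for $C1$ via Theorem \ref{thm:orthogonal_group_surjective}, and $O(q)=\{\pm1\}$ on the glue $\FF_3$. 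I expect the third paragraph to be the hard part: the sign analysis of the $3$-adic discriminant forms that forces $C3\cong A_2(2)$ and simultaneously kills $c_2,c_4,c_6$ is delicate, while the rest is resultant bookkeeping and standard uniqueness-of-extension input.
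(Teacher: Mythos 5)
Your overall strategy is the paper's: decompose $\NS$ into the cyclotomic kernels $Ck$, constrain the glue by resultants, pin down each piece by root-freeness and genus considerations, then check uniqueness of the glue maps. Your opening is sound and in one respect cleaner than the paper's: deducing $c_{12}\mid\chi_{g|\NS}$ from the fact that $\overline{g}$ acts on $\discgroup_{\NS}\cong\mathcal{O}_K/\mathfrak{P}^2$ with minimal polynomial $c_4^2\equiv c_{12}\bmod 3$, and that only $c_4,c_{12}$ reduce to powers of $x^2+1$, replaces the paper's ``$\res(c_{36},c_4)=3^2$ is too small''. The exclusion of $c_9,c_{18}$ also works (and follows even faster from $c_1\mid\chi_{g|\NS}$, which is forced by an invariant ample class). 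Note, though, that $g^{12}|_{\NS}=\id$ is read off the fixed locus of one Weierstrass model and cannot be assumed when classifying \emph{all} actions; your parenthetical patch covers only the $c_9,c_{18}$ step, so you should drop that input entirely.

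The genuine gap is your third paragraph: the assertion that a sign analysis of the $3$-adic discriminant forms plus a ``parallel $2$-adic check'' and root-freeness eliminates every configuration involving $c_2,c_4,c_6$ and forces $\det C12=2^23^4$, $C3\cong A_2(2)$, $C1\cong U\oplus A_2$. That is where the paper spends four of its five steps, and none of it is a formality. One must rule out $c_{12}^2\mid\chi_g$ and $c_3c_6,\,c_3^2,\,c_6^2\mid\chi_g$ by enumerating the relevant definite genera and checking that every class contains roots (done there with Magma and tables); run through the six a priori determinants of $C12$ and kill $3^6$ by an isotropy obstruction (the nondegenerate $(\discgroup_{C4})_3$ would have to glue to the totally isotropic $(g^2+1)\discgroup_{C12}$); and exclude $c_4$ by a three-way split on $\det C4$ in which one branch dies because $(-2)\oplus(-2)$ has roots, one because the only available glue is totally isotropic, and one because gluing $C4$ to $C12$ over $3^2$ creates roots. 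Your sign-class observation is one ingredient of one of these steps, not a substitute for the case analysis — and you concede as much. There is also a smaller error at the end: Theorem \ref{thm:orthogonal_group_surjective} does not apply to $C3\cong A_2(2)$, which is definite of rank $2$ with $l(\discgroup)=2$; the paper settles the only nontrivial freedom, the $C12$--$C3$ glue over $2^2$, from the other side via surjectivity of $\Aut(\discgroup_{C12},g)\rightarrow\Aut((q_{C12})_2,\overline{g})$.
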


\begin{lemma}\label{lem:21_7^2_classification} \boldmath$(21,7^2),(42,7^2)$ \unboldmath
On the K3 surface $X_{(21,7^2)}$ there are $3$ (resp. $2$) conjugacy classes of purely non-symplectic automorphisms of order $21$ (resp. $42$). They are distinguished by their invariant lattices 
 \begin{enumerate}
 	\item $C_1\cong U\oplus E_6$, 
 	\item $C_1\cong U \oplus 2 A_2$,
 	\item $C_1\cong U(3) \oplus A_2$.
 \end{enumerate}
Only $(1)$ and $(2)$ are the square of an automorphism of order $42$.
They 
\end{lemma}

\begin{lemma}\label{lem:21,7^2}
Affine Weierstraß models for $X_{(21,7^2)}$ and the automorphisms of order $21$ and $42$ corresponding to the cases (1),(2) in Lemma \ref{lem:21_7^2_classification}
are given below. For case (3) there is a singular projective model.\\
\begin{enumerate}
\item    $y^2=x^3+t^4(t^7+1), \qquad \quad \quad (x,y,t)\mapsto(\zeta_3 \zeta_7^6 x,\pm \zeta_7^2 y,\zeta_7 t)$; 
\item    $y^2=x^3+t^3(t^7+1), \qquad \quad \quad (x,y,t)\mapsto (\zeta_3 \zeta_7^3 x,\pm \zeta_7 y,\zeta_7t)$;
\item    $x_0^3x_1 + x_1^3x_2 + x_0x_2^3 - x_0x_3^3, \quad \!(x_0,x_1,x_2,x_3) \mapsto(\zeta_7x_0,\zeta_7x_1,x_2,\zeta_3 x_3).$
\end{enumerate}
\end{lemma}
\begin{proof}
One can identify the three cases by computing the fixed lattice of $f^{14}$. 
\end{proof}

\section{Acknowledgements}
I thank my advisor Matthias Schütt for his guidance, innumerable helpful comments and discussions,
Davide Veniani for sharing his insights on finding complex models, 
Víctor Gonzalez-Alonso for fruitful discussions on gluings and lattices
and Daniel Loughran for answering my questions on number theory and totally positive units. Special thanks go to Shingo Taki for pointing out two mistakes in an earlier version which led to two missing cases in the classification.
\bibliographystyle{JHEPsort}
\bibliography{literature}{}
\end{document}